\newtheorem{theorem}{Theorem}[section]
\newtheorem{lemma}[theorem]{Lemma}
\newtheorem{proposition}[theorem]{Proposition}
\newtheorem{definition}[theorem]{Definition}
\newtheorem{remark}[theorem]{Remark}
\newtheorem*{theorem*}{Theorem}
\newenvironment{manualtheorem}[1]{%
  \manualtheoreminner
}{\endmanualtheoreminner}
\begin{document}

\title{The convergence of discrete uniformizations for closed surfaces}

\author{Tianqi Wu}
\address{Center of Mathematical Sciences and Applications, Harvard University, Cambridge, MA 02138
}
\email{tianqi@cmsa.fas.harvard.edu}

\author{Xiaoping Zhu}
\address{Department of Mathematics, Rutgers University, Piscataway, NJ, 08854}
\email{xz349@math.rutgers.edu}



\begin{abstract}
The notions of discrete conformality on triangle meshes have  rich mathematical theories and wide applications. The related notions of discrete uniformizations on triangle meshes, suggest efficient methods for computing the uniformizations of surfaces. This paper proves that the discrete uniformizations approximate the continuous uniformization for closed surfaces of genus $\geq1$, when the approximating triangle meshes are reasonably good.
To the best of the authors' knowledge, this is the first convergence result on computing uniformizations for surfaces of genus $>1$.
\end{abstract}
\maketitle
\tableofcontents

\newtheorem*{equicont}{Theorem \ref{equicont}}
\section{Introduction}
The celebrated Poincar\'e-Koebe uniformization theorem states that any simply connected Riemannian surface $(M,g)$ is conformally equivalent to the unit sphere $\mathbb S^2$, or the complex plane $\mathbb C$, or the open unit disk $\mathbb D$. As a consequence, any smooth Riemannian metric $g$ on a closed orientable surface $M$ is conformally equivalent to a Riemannian metric $\tilde g$ of constant curvature $0$ or $\pm1$.
Computing such a uniformization map to $\mathbb S^2$ or $\mathbb C$ or $\mathbb D$, or such a uniformization metric $\tilde g$,
has a wide range of applications in surface parameterizations, classifications, and matchings and so on. See \cite{koehl2013automatic}, \cite{gu2004genus}, \cite{boyer2011algorithms}, \cite{hong2006conformal}, \cite{lui2010optimized} for examples.

Most of the existing methods of computing uniformizations are only for topological disks or topological spheres. Luo \cite{luo2004combinatorial} and Bobenko-Pinkall-Springborn \cite{bobenko2015discrete} et al. developed the theory of discrete conformality on triangle meshes, based on the notion of vertex scalings. Here, discrete uniformizations for triangle meshes can be naturally defined, and computed efficiently by minimizing an explicit convex functional. So for surfaces approximated by triangle meshes, we can compute the (approximated) uniformizations efficiently.

This paper proves the convergence of this computing method, for smooth closed surfaces of genus $\geq1$. Briefly speaking, we proved that for a smooth Riemannian surface $M$ of genus $\geq1$ and a regular dense geodesic triangulation $T$ of $M$, the discrete uniformization of $T$ approximates the uniformization of $M$, with an error bounded linearly by the maximum size of the triangles in $T$. Our result is a strengthening and a generalization of Gu-Luo-Wu's convergence theorem (Theorem 6.1 in \cite{gu2019convergence}), which is only for the case of genus 1.
To the best of the authors' knowledge, this is the first convergence result on computing uniformizations for surfaces of genus $>1$.

Aside from other technicalities, the main ingredients of the proofs are
\begin{itemize}
\item a cubic estimate (Lemma \ref{cubic estimate}) first given by Gu-Luo-Wu \cite{gu2019convergence}, showing that the discrete conformal change approximates the continuous conformal change, and

\item explicit formulae (Proposition \ref{Discrete curvature equation for torus case} and \ref{Discrete curvature equation for hyperbolic case})  given by Bobenko-Pinkall-Springborn \cite{bobenko2015discrete}, showing that the differential of the discrete curvature map is indeed a discrete elliptic operator, and

\item a technical elliptic estimate on graphs (Lemma \ref{estimate for divergence operator}), based on discrete isoperimetric inequalities on triangle meshes.
\end{itemize}

\subsection{Set Up and Main Theorems}
\label{Notation and Statement}
Assume $M$ is a two dimensional closed orientable surface, and $T$ is a \emph{triangulation} of $M$, which is a simplicial complex. Denote $V(T),E(T),F(T)$ as
the set of vertices, edges, and triangles of $T$ respectively. Further for $M$ equipped with a Riemannian metric, $T$ is called a \emph{geodesic triangulation} if any edge in $T$ is a geodesic segment.
In this paper, a Riemannian surface $(M,g)$ is always assumed to be smooth, i.e., $C^\infty$.

Given a triangulation $T$, if an edge length $l\in\mathbb R^{E(T)}_{>0}$ satisfies the triangle inequalities, then we can determine a Euclidean triangle mesh $(T,l)_E$ by assuming that each triangle in $F(T)$ is a Euclidean triangle with the edge lengths given by $l$.
We can construct an analogous hyperbolic triangle mesh $(T,l)_H$, by using hyperbolic triangles instead of the Euclidean triangles.
Notice that a Euclidean (\emph{resp.} hyperbolic) triangle mesh has a piecewise Euclidean (\emph{resp.} hyperbolic) metric, and all the singular points (or cone points) are in $V(T)$.
Given $(T,l)_E$ or $(T,l)_H$, $\theta^i_{jk}$ denotes the inner angle at $i$ in the triangle $\triangle ijk\in F(T)$, and the \emph{discrete curvature} $K_i$ at a vertex $i\in V(T)$ is defined to be the angle defect
$$
K_i=2\pi-\sum_{jk:ijk\in F(T)}\theta^i_{jk}.
$$
It is easy to see that $(T,l)_E$ (\emph{resp.} $(T,l)_H$) is globally flat (\emph{resp.} globally hyperbolic) if and only if $K_i=0$ for any $i\in V$.

\begin{definition}[\cite{luo2004combinatorial}\cite{bobenko2015discrete}]\label{definition 1.1}
The Euclidean triangle meshes $(T,l)_E$ and $(T,l')_E$ are called \emph{discrete conformal }if there exists some $u\in\mathbb R^{V(T)}$, such that
\begin{equation}
\label{Euclidean discrete conformal}
l'_{ij}=e^{\frac{1}{2}(u_i+u_j)}l_{ij}
\end{equation}
for any $ij\in E(T)$.
The hyperbolic triangle meshes $(T,l)_H$ and $(T,l')_H$ are  called \emph{discrete conformal }
if there exists some $u\in\mathbb R^{V(T)}$, such that
\begin{equation}
\label{hyperbolic discrete conformal}
\sinh\frac{l'_{ij}}{2}=e^{\frac{1}{2}(u_i+u_j)}\sinh\frac{l_{ij}}{2}
\end{equation}
for any $ij\in E(T)$.
\end{definition}
Such a vector $u\in\mathbb R^V$ is called a \emph{discrete conformal factor}, and we denote $l'=u*l$ if (\ref{Euclidean discrete conformal}) holds, and $l'=u*_hl$ if (\ref{hyperbolic discrete conformal}) holds.
Given $(T,l)_E$ or $(T,l)_H$, $\theta^i_{jk}(u)$ and $K_i(u)$ denote the corresponding inner angle and
the discrete curvature respectively,
in $(T,u*l)_E$ or $(T,u*_hl)_H$.
Let $K(u)=[K_i(u)]_{i\in V}\in\mathbb R^V$. Given $(T,l)_E$ or $(T,l)_H$, $u\in\mathbb R^V$ is called a \emph{discrete uniformization conformal factor} if $K(u)=0$, i.e., $(T,u*l)_E$ is globally flat or $(T,u*_hl)_H$ is globally hyperbolic.

Luo \cite{luo2004combinatorial} and Bobenko-Pinkall-Springborn \cite{bobenko2015discrete} developed the variational principles, saying that
$$
\mathcal F(u)=\int^u_{u^*} K(\tilde u)d\tilde u
$$
is well defined and locally convex. Here $\mathcal F(u)$ is only defined on a domain of $\mathbb R^V$ where $u*l$ or $u*_hl$ satisfies the triangle inequalities. Bobenko-Pinkall-Sprinborn \cite{bobenko2015discrete} gave simple and explicit formulae extending $\mathcal F$ to a globally convex functional $\tilde{\mathcal F}$ on $\mathbb R^V$. So if a discrete uniformization conformal factor exists, it can be computed efficiently by minimizing an explicit globally convex functional $\tilde {\mathcal F}$ on $\mathbb R^V$.

Given a smooth Riemannian surface $(M,g)$ of genus 1 (\emph{resp. genus $>1$}), and a geodesic triangulation $T$ with the edge length $l\in\mathbb R^{E(T)}_{>0}$ measured in $(M,g)$, we view $(T,l)_E$ (\emph{resp. }$(T,l)_H$) as a triangle mesh approximation of $(M,g)$ and prove that the discrete uniformization approximates the true uniformization of $(M,g)$ if the triangle mesh is sufficiently regular and dense. For the regularity of a triangle mesh, we use the following definition.
\begin{definition}\label{epsilon regular mesh}
 $(T,l)_E$ or $(T,l)_H$ is called \emph{$(\epsilon_1,\epsilon_2)$-regular} if
 \begin{enumerate}[label=(\alph*)]
	\item $\theta^i_{jk}\geq\epsilon_1$ for any inner angle, and
	\item $\theta^k_{ij}+\theta^{k'}_{ij}\leq\pi-\epsilon_2$ for any pair of adjacent triangles $\triangle ijk$ and $\triangle ijk'$.
\end{enumerate}
\end{definition}
Here part (b) is a uniformly strict Delaunay condition, and could be satisfied by, for example, uniformly acute triangle meshes.
Our main convergence results are the following Theorem \ref{high genus main thm} and \ref{torus main thm}.
In this paper, if $x\in\mathbb R^A$ is a vector for some finite set $A$, we use
$|x|$ to denote the infinite norm of $x$, i.e., $|x|=|x|_\infty=\max_{i\in A}|x_i|$.
\begin{theorem}\label{high genus main thm}
Suppose $(M,g)$ is a closed orientable smooth Riemannian surface with genus $>1$, and $\bar u=\bar u_{M,g}\in C^{\infty}(M)$ is the unique uniformization conformal factor such that $e^{2\bar u}g$ is hyperbolic.
Assume $T$ is a geodesic triangulation, and $l\in\mathbb R^{E(T)}_{>0}$ denotes the edge length in $(M,g)$.
Then for any $\epsilon_1,\epsilon_2>0$, there exists a constant $\delta=\delta(M,g,\epsilon_1,\epsilon_2)>0$  such that if $(T,l)_H$ is $(\epsilon_1,\epsilon_2)$-regular and $|l|<\delta$, then
\begin{enumerate}[label=(\alph*)]
\item there exists a unique discrete conformal factor $u\in V(T)$, such that $(T,u*_hl)_H$ is globally hyperbolic, and
\item$\big|u-\bar u|_{V(T)}\big|\leq C|l|$ for some $C=C(M,g,\epsilon_1,\epsilon_2)>0$.
\end{enumerate}
\end{theorem}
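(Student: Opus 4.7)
The plan is to run a quantitative inverse function theorem for the discrete curvature map $K\colon\mathbb R^{V(T)}\to\mathbb R^{V(T)}$, centered at the vertex restriction $\bar u^\ast:=\bar u|_{V(T)}$ of the smooth uniformization conformal factor. Three things need to be assembled: (i)~that $\bar u^\ast$ is an approximate zero of $K$; (ii)~that $dK$ is uniformly invertible on a neighborhood of $\bar u^\ast$; and (iii)~that a single Newton step then produces the true zero at the right distance, with uniqueness supplied by convexity.

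For (i), I would apply the cubic estimate (Lemma~\ref{cubic estimate}) edge by edge: the discretely rescaled length $(\bar u^\ast*_h l)_{ij}$ agrees with the geodesic distance between the endpoints of $ij$ in the smooth hyperbolic metric $\tilde g:=e^{2\bar u}g$ up to an error $O(|l|^3)$, with constants depending on the $C^k$-norms of $\bar u$ and on $(M,g)$. Because $\tilde g$ is a smooth hyperbolic metric without cone points, the interior angles at each vertex of a geodesic triangulation of $\tilde g$ sum to exactly $2\pi$; comparing the hyperbolic triangles built from the edge lengths $\bar u^\ast*_h l$ with the corresponding true hyperbolic geodesic triangles then shows that each inner angle $\theta^i_{jk}(\bar u^\ast)$ differs from its smooth counterpart by $O(|l|^2)$. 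Summing around each vertex, whose degree is bounded by $2\pi/\epsilon_1$, yields a quantitative bound of the form $|K(\bar u^\ast)|_\infty\leq C_1(M,g,\epsilon_1)\,|l|^2$.

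For (ii), the Bobenko--Pinkall--Springborn formula (Proposition~\ref{Discrete curvature equation for hyperbolic case}) presents $dK/du$ as a discrete elliptic operator: the off-diagonal entries are (minus) cotangent-type edge weights, and in the hyperbolic case the diagonal carries an extra strictly positive mass term. Under $(\epsilon_1,\epsilon_2)$-regularity, these weights are bounded uniformly away from degeneracy, and Lemma~\ref{estimate for divergence operator} supplies a quantitative $\ell^\infty\to\ell^\infty$ bound for $(dK/du)^{-1}$ that stays uniform over admissible $u$ in a neighborhood of $\bar u^\ast$. Combining (i) and (ii) via a standard inverse function theorem argument (or one Newton step) produces a discrete conformal factor $u$ with $K(u)=0$ and $|u-\bar u^\ast|_\infty\leq C|l|$, provided $|l|<\delta$. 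Uniqueness is immediate from strict convexity of the extended Bobenko--Pinkall--Springborn functional $\tilde{\mathcal F}$ on $\mathbb R^{V(T)}$: its Hessian equals $dK$ and, in the hyperbolic setting, is positive definite everywhere.

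The main obstacle is (ii): producing a quantitative $\ell^\infty$ bound on $(dK/du)^{-1}$ that is uniform across all $(\epsilon_1,\epsilon_2)$-regular meshes and scales correctly with $|l|$. The continuous analogue rests on the Green's function of the hyperbolic Laplacian, whose logarithmic singularity in two dimensions must be tracked carefully at the discrete level; this is where the discrete isoperimetric input of Lemma~\ref{estimate for divergence operator} is essential. The positive mass term from hyperbolicity is exactly what prevents a loss of coercivity and allows a cleaner argument than in the flat (genus one) case, where one must quotient out the constant functions and recover the missing normalization by external means---an extra complication present in Gu--Luo--Wu's torus argument that the higher-genus theorem here manages to avoid.
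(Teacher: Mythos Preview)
Your overall architecture---show $K(\bar u^\ast)$ is small, invert $dK/du$, close by convexity---matches the paper's, but there is a genuine gap in how you combine (i) and (ii).

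The issue is your reading of Lemma~\ref{estimate for divergence operator}. It does \emph{not} provide an $\ell^\infty\to\ell^\infty$ operator bound on $(D-\Delta_\eta)^{-1}$. In the hyperbolic setting the diagonal mass is tiny: $D_{ii}\asymp l_{ij}^2$ (this is exactly what the paper computes in equation~\eqref{24}). By the maximum principle one then only gets $\|(D-\Delta_\eta)^{-1}\|_{\ell^\infty\to\ell^\infty}\lesssim 1/\min_i D_{ii}\sim |l|^{-2}$, and this is essentially sharp (take the right-hand side $f_i=D_{ii}$, whose solution is the constant vector $\mathbf 1$). Feeding in your step~(i), namely $|K(\bar u^\ast)|_\infty=O(|l|^2)$, you obtain only $|(dK/du)^{-1}K(\bar u^\ast)|_\infty=O(1)$, not $O(|l|)$. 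That suffices for existence, but not for the estimate in part~(b). So the ``positive mass term'' does make $dK/du$ invertible, but it is far too weak to give the coercivity you are counting on.

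What Lemma~\ref{estimate for divergence operator} actually asserts is a bound on $(D-\Delta_\eta)^{-1}$ applied to a vector \emph{of the specific form} $div(x)+y$, where $x\in\mathbb R^E_A$ is a flow with $|x_{ij}|\le C_2\,l_{ij}^2$ and $|y_i|\le C_4 D_{ii}\,|l|\cdot|V|_l^{1/2}$. Accordingly the paper does not stop at an $\ell^\infty$ bound on $K(\bar u^\ast)$. In Part~1 of Section~\ref{the case of high genus} it exploits two facts: the per-triangle angle errors $\alpha^i_{jk}=\bar\theta^i_{jk}-\theta^i_{jk}(\bar u)$ are $O(l_{ij}^2)$, and (crucially, using Gauss--Bonnet on a hyperbolic triangle) their sum $\alpha^i_{jk}+\alpha^j_{ik}+\alpha^k_{ij}$ is an \emph{area} error, hence $O(l_{ij}^4)$. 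From this it manufactures an explicit decomposition $K(\bar u^\ast)=div(x)+y$ with $x_{ij}=O(l_{ij}^2)$ and $y_i=O(l_{ij}^4)$, exactly the input Lemma~\ref{estimate for divergence operator} needs to return an $O(|l|)$ bound. Your step~(i) must be upgraded to produce this divergence structure; the raw bound $|K(\bar u^\ast)|_\infty=O(|l|^2)$ is not enough. A minor secondary point: ``a single Newton step'' does not yield an exact zero. The paper instead runs the flow $u'(t)=(D(u)-\Delta_{\eta(u)})^{-1}K(\bar u^\ast)$, so that $K(u(t))=(1-t)K(\bar u^\ast)$; the fixed right-hand side keeps the same $div(x)+y$ form throughout, and one checks the trajectory stays in an $(\epsilon/4)$-regular regime up to $t=1$.
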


\begin{theorem}\label{torus main thm}
Suppose $(M,g)$ is a closed orientable smooth Riemannian surface of genus 1, and $\bar u=\bar u_{M,g}\in C^{\infty}(M)$ is the unique uniformization conformal factor such that $e^{2\bar u}g$ is flat and $Area(M,e^{2\bar u}g)=1$. Assume $T$ is a geodesic triangulation, and $l\in\mathbb R^{E(T)}_{>0}$ denotes the edge length in $(M,g)$.
Then for any $\epsilon_1,\epsilon_2>0$, there exists a constant $\delta=\delta(M,g,\epsilon_1,\epsilon_2)>0$
 such that if $(T,l)_E$ is $(\epsilon_1,\epsilon_2)$-regular and $|l|<\delta$, then
\begin{enumerate}[label=(\alph*)]
	\item there exists a unique discrete conformal factor $u\in\mathbb R^{V(T)}$, such that $(T,u*l)_E$ is globally flat and $Area((T,u*l)_E)=1$, and
	\item $\big|u-\bar u|_{V(T)}\big|\leq C|l|$ for some constant
 $C=C(M,g,\epsilon_1,\epsilon_2)$.
\end{enumerate}
\end{theorem}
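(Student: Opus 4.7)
The plan is to apply a quantitative inverse function theorem to the discrete curvature map $K$ around the approximate factor $\bar U := \bar u|_{V(T)}$. First, using the cubic estimate (Lemma \ref{cubic estimate}) I would compare the discrete conformal lengths $\bar U * l$ with the edge lengths in the target flat metric $e^{2\bar u}g$ measured along the same curves, obtaining an error of $O(|l|^3)$ per edge. Since $(M, e^{2\bar u}g)$ is flat, any sufficiently small geodesic triangle there is Euclidean with angle sum exactly $\pi$, so the angles at each vertex of the smooth picture sum to exactly $2\pi$. Converting the $O(|l|^3)$ edge-length error into an $O(|l|^2)$ angle error (by the standard scaling $\partial \theta / \partial l \sim 1/l$, together with the $(\epsilon_1,\epsilon_2)$-regularity to control vertex degrees and triangle shapes) gives $|K(\bar U)| \le C_1 |l|^2$.

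Next I would enforce the unit-area constraint by a scalar shift. Since $\mathrm{Area}((T, (u+c)*l)_E) = e^{2c} \mathrm{Area}((T, u*l)_E)$ and the area of $(T, \bar U * l)_E$ is already within $O(|l|^2)$ of $1$, there is a unique $c_0$ with $|c_0| = O(|l|^2)$ so that $\bar U_0 := \bar U + c_0$ yields unit area. Because $K$ is invariant under constant shifts, the bound $|K(\bar U_0)| = O(|l|^2)$ is preserved.

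The third step is the actual inversion. By Proposition \ref{Discrete curvature equation for torus case}, $dK$ is a symmetric positive semi-definite discrete Laplace operator whose kernel is the constants, with edge weights bounded above and below by constants depending on $\epsilon_1, \epsilon_2$; restricted to the mean-zero hyperplane (which contains the image of $K$ by discrete Gauss-Bonnet, since $\chi(M)=0$), it is invertible. Lemma \ref{estimate for divergence operator} provides the essential operator-norm bound for this inverse. A standard Newton / Banach contraction argument around $\bar U_0$, with quadratic remainder estimates justified by the smoothness of $K$ and by Definition \ref{epsilon regular mesh}, then produces a unique $u$ near $\bar U_0$ with $K(u) = 0$ and unit area, satisfying $|u - \bar U_0| \le C_2 |l|$, and hence the error bound in part (b). Global uniqueness in part (a) follows from the global convexity of Bobenko-Pinkall-Springborn's extended functional $\tilde{\mathcal F}$ together with the area normalization, which pins down the otherwise-free additive constant.

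The main obstacle is the third step. In two dimensions the Green's function of the Laplacian has a logarithmic singularity, so the $L^\infty \to L^\infty$ norm of $(dK)^{-1}$ is delicate to control as $|l| \to 0$; Lemma \ref{estimate for divergence operator}, based on discrete isoperimetric inequalities, must deliver a bound compatible with the $O(|l|^2)$ size of $K(\bar U_0)$ to close the loop to the claimed $O(|l|)$ estimate. A secondary subtlety is that the nonlinear remainder in the Newton step must be controlled \emph{uniformly} in $|l|$, which is exactly why both the lower angle bound and the strict Delaunay condition in Definition \ref{epsilon regular mesh} are imposed: they keep the triangle geometry nondegenerate as the mesh refines.
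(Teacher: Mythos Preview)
Your overall architecture (approximate factor $\bar U$, then invert the linearized curvature map) matches the paper's, but there is a real gap in the inversion step. You correctly derive $|K(\bar U)|_\infty = O(|l|^2)$, and you correctly worry that the $L^\infty\to L^\infty$ norm of $\Delta_\eta^{-1}$ is too large to convert this into an $O(|l|)$ bound on $u-\bar U$. What you have not seen is how the paper gets around that: Lemma~\ref{estimate for divergence operator} is \emph{not} an operator-norm bound on $\Delta_\eta^{-1}$; it bounds only quantities of the form $\Delta_\eta^{-1}\circ div(x)$ with $|x_{ij}|\le C_2 l_{ij}^2$. So one must first exhibit $K(\bar U)$ in divergence form. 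The paper does this explicitly (Step~1 of \S\ref{torus 1}): writing $\alpha^i_{jk}=\bar\theta^i_{jk}-\theta^i_{jk}(\bar U)$ and using that the flat comparison mesh has $\alpha^i_{jk}+\alpha^j_{ik}+\alpha^k_{ij}=0$ in every triangle, one manufactures an antisymmetric edge function $x$ with $K(\bar U)=div(x)$ and $x_{ij}=O(l_{ij}^2)$. This structural identity, not merely the size bound $|K(\bar U)|=O(|l|^2)$, is what lets Lemma~\ref{estimate for divergence operator} yield $|\Delta_\eta^{-1}K(\bar U)|=O(|l|\cdot |V|_l^{1/2})=O(|l|)$. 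Without it your Newton step does not close.

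Two smaller points. First, the paper runs a continuation ODE $u'(t)=\Delta_{\eta(u)}^{-1}K(\bar U)$ so that $K(u(t))=(1-t)K(\bar U)$, rather than a Newton/Banach iteration; this is a stylistic difference, but note that the same divergence structure is used at every $t$ because the right-hand side is the fixed vector $K(\bar U)=div(x)$, so Lemma~\ref{estimate for divergence operator} applies uniformly along the flow. Second, the paper does the area normalization \emph{after} flattening, and it needs the sharper estimate $\log\mathrm{Area}((T,u(1)*l)_E)=O(|l|)$ (not $O(|l|^2)$ as you wrote) to keep the final shift within the claimed error; your $O(|l|^2)$ area estimate would require comparing to the flat area directly, which is fine, but be aware that after the $O(|l|)$ perturbation $u(1)-\bar U$ the area discrepancy is only $O(|l|)$.
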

\subsection{Outline of the Proofs}
For simplicity we use $\bar u$ to denote $\bar u|_{V(T)}$.
For both Theorem \ref{high genus main thm} and \ref{torus main thm}, we first show that $\bar u$ is already a good candidate for the discrete uniformization conformal factor, in the sense that $K(\bar u)$ is "very small", or equivalently, $(T,\bar u*l)_E$ (\emph{resp. }$(T,\bar u*_hl)_H$) is very close to be globally flat (\emph{resp. } globally hyperbolic).
Then by constructing a flow on the triangle mesh, we perturb $\bar u$ to $\bar u+\delta u$, such that $K(\bar u+\delta u)$ is exactly 0, i.e., $\bar u+\delta u$ is a discrete uniformization conformal factor.
Based on the fact that $\partial K/\partial u$ is indeed a discrete elliptic operator, $\delta u$ is shown to be bounded by $C|l|$ via a technical discrete elliptic estimate.
\subsection{Other Notations}
In the remaining part of this paper, if a geodesic triangle $\triangle ABC$ is given, we always denote
\begin{enumerate}[label=(\alph*)]
\item $a,b,c$ as the lengths of the edges opposite to $A,B,C$ respectively, and

\item $A,B,C$ as the inner angles, and

\item $|\triangle ABC|$ as the area of $\triangle ABC$.
\end{enumerate}

For a domain $U$ in a 2-dimensional surface, the area of $U$ is also denoted as $|U|$, or $|U|_g$ if it is induced from a Riemannian metric $g$. For a finite union $\gamma$ of curves, we denote its length as $s(\gamma)$, or $s_g(\gamma)$ if it is induced from a Riemannian metric $g$.
For a point $x$ on the Riemannian surface $(M,g)$ and a radius $r>0$, denote
$$
B(x,r)=B_g(x,r)=\{y\in M:d_g(x,y)<r\}.
$$
A triangle mesh $(T,l)_E$ or $(T,l)_H$ is simply called \emph{$\epsilon$-regular} if it is $(\epsilon,\epsilon)$-regular.

\subsection{Organization of the Paper}
In Section \ref{section 2} we set up basic notions and properties for discrete calculus on graphs. Section \ref{differential of discrete curvatures and angles} reviews explicit formulae for $\partial K/\partial u$, and introduces elementary estimates on triangles.
Theorem \ref{high genus main thm} and \ref{torus main thm} are proved in Section \ref{section 4}, assuming three elementary geometric lemmas and a discrete elliptic estimate. Section \ref{Proof of geometric lemmas} proves the auxiliary lemmas and Section \ref{proof of the key estimates on graphs} proves the discrete elliptic estimate.

\subsection{Acknowledgement}
The work is supported in part by NSF 1760471, NSF DMS 1737876, NSF 1760527, and NSF 1811878.

\section{Calculus on Graphs}
\label{section 2}
Assume $G=(V,E)$ is an undirected connected simple graph, on which we will frequently consider vectors in $\mathbb R^V$, $\mathbb R^E$ and $\mathbb R^E_A$. Here $\mathbb R^E$ and $\mathbb R^E_A$ are both vector spaces of dimension $|E|$ such that

\begin{enumerate}[label=(\alph*)]
\item a vector $x\in \mathbb R^E$ is represented symmetrically, i.e., $x_{ij}=x_{ji}$, and
\item a vector $x\in\mathbb R^E_A$ is represented anti-symmetrically, i.e., $x_{ij}=-x_{ji}$.
\end{enumerate}
A vector in $\mathbb R^E_A$ is also called a \emph{flow} on $G$.
An \emph{edge weight} $\eta$ on $G$ is a vector in $\mathbb R^E$. Given an edge weight $\eta$, the \emph{gradient} $\nabla x=\nabla_\eta x$ of a vector $x\in\mathbb R^V$ is a flow in $\mathbb R^E_A$ such that
$$
(\nabla x)_{ij}=\eta_{ij}(x_j-x_i).
$$
Given a flow $x\in\mathbb R^E_A$, its \emph{divergence} $div(x)$ is a vector in $\mathbb R^V$ such that
$$
div(x)_i=\sum_{j\sim i}x_{ij}.
$$
Given an edge weight $\eta$, the associated \emph{Laplacian} $\Delta=\Delta_\eta:\mathbb R^V\rightarrow\mathbb R^V$ is defined as
$\Delta x=div(\nabla_\eta x)$, i.e.,
$$
(\Delta x)_i=\sum_{j\sim i}\eta_{ij}(x_j-x_i).
$$
There is a discrete Green's identity on graphs.
\begin{proposition}[Green's identity]
	\label{green's identity}
Given $x,y\in\mathbb R^V$,
$$
\sum_{i\in V}x_i(\Delta y)_i=\sum_{i\in V}y_i(\Delta x)_i.
$$
\end{proposition}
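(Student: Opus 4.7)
The plan is to prove the identity by a direct double-counting argument: expand both $\sum_i x_i(\Delta y)_i$ and $\sum_i y_i(\Delta x)_i$ via the definition of the Laplacian, rewrite each as a sum over oriented incidences $(i,j)$ with $j\sim i$, and then reorganize this sum as one indexed by unordered edges of $G$. This is the discrete analogue of integration by parts, and everything reduces to exploiting the symmetry of the edge weight $\eta_{ij}=\eta_{ji}$.

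Concretely, first I would write
\[
\sum_{i\in V}x_i(\Delta y)_i=\sum_{i\in V}\sum_{j\sim i}x_i\,\eta_{ij}(y_j-y_i).
\]
The inner double sum ranges over ordered pairs $(i,j)$ with $ij\in E$, so each unordered edge $\{i,j\}$ contributes the two terms $x_i\eta_{ij}(y_j-y_i)$ and $x_j\eta_{ji}(y_i-y_j)$. Using $\eta_{ij}=\eta_{ji}$, these combine to $-\eta_{ij}(x_i-x_j)(y_i-y_j)$. Therefore
\[
\sum_{i\in V}x_i(\Delta y)_i=-\sum_{\{i,j\}\in E}\eta_{ij}(x_i-x_j)(y_i-y_j).
\]
This bilinear expression on the right is manifestly symmetric in $x$ and $y$, and applying the same calculation with the roles of $x$ and $y$ exchanged yields the identical right-hand side. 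Comparing gives the desired equality.

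There is no real obstacle here; the one place where one must be careful is the bookkeeping in passing from the sum over vertex-edge incidences to the sum over unordered edges, making sure each edge is counted exactly once with the correct sign. If preferred, one can phrase the middle step as $\sum_i x_i(\Delta y)_i=-\langle \nabla_\eta x,\nabla_\eta y\rangle_E$, where $\langle \cdot,\cdot\rangle_E$ denotes the standard pairing on unoriented edges; the symmetry of the right-hand side then gives the proposition immediately. Since finiteness of $V$ and $E$ makes every rearrangement harmless, no convergence issue arises.
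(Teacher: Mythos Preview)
Your proof is correct and follows essentially the same approach as the paper: both expand $\sum_i x_i(\Delta y)_i$ via the definition of $\Delta$ and observe that the resulting expression is symmetric in $x$ and $y$. The paper splits the expansion as $\sum_{ij}\eta_{ij}x_iy_j-\sum_i x_iy_i\sum_{j\sim i}\eta_{ij}$ and notes each piece is symmetric, while you group terms edge-by-edge to obtain the Dirichlet form $-\sum_{\{i,j\}}\eta_{ij}(x_i-x_j)(y_i-y_j)$; these are minor variants of the same computation.
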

\begin{proof}
$$
\sum_{i\in V}x_i(\Delta y)_i=\sum_{i\in V}x_i\sum_{j\sim i}\eta_{ij}(y_j-y_i)=\sum_{ij\in E}\eta_{ij}x_iy_j-\sum_{i\in V}x_i y_i\sum_{j\sim i}\eta_{ij}.
$$
Then by symmetry the Green's identity holds.
\end{proof}
A Laplacian is a linear transformation on $\mathbb R^V$, and could be identified as a $|V|\times |V|$ symmetric matrix.
By the definition, $\Delta\bold 1=0$ where $\bold 1=(1,1,...,1)\in\mathbb R^V$. Also, it is  well known that if $\eta\in\mathbb R^E_{>0}$, then $ker(\Delta)=\mathbb R\bold 1$ by the connectedness of the graph $G$.

In the rest of this section, we always assume $\eta\in\mathbb R^E_{>0}$.
So $\Delta$ is invertible on the subspace $\bold 1^\perp=\{x\in\mathbb R^V:\sum_{i\in V}x_i=0\}$.
Denote $\Delta^{-1}$ as the inverse of $\Delta$ on $\bold 1^\perp$. The following regularity property will be needed in the proof of our main theorem.
\begin{lemma}
	\label{smoothness of operator in hyperbolic case}
$(\eta,y)\mapsto\Delta_\eta^{-1}y$ is a smooth map from $\mathbb R^V_{>0}\times \bold 1^{\perp}$ to $\bold 1^\perp$.
\end{lemma}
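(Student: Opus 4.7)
The plan is to represent $\Delta_\eta^{-1}$ as the inverse of a matrix whose entries depend linearly (hence smoothly) on $\eta$, and then invoke the smoothness of matrix inversion on $GL_n(\mathbb{R})$.

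First I would record the structural observation that for any $\eta \in \mathbb{R}^E$, the Laplacian $\Delta_\eta$ sends $\mathbb{R}^V$ into $\mathbf{1}^\perp$: the identity
\[
\sum_{i \in V}(\Delta_\eta x)_i \;=\; \sum_{ij \in E} \eta_{ij}\bigl[(x_j-x_i)+(x_i-x_j)\bigr] \;=\; 0
\]
follows from the symmetry $\eta_{ij} = \eta_{ji}$. Combined with the fact (stated just above the lemma) that $\ker \Delta_\eta = \mathbb{R}\mathbf{1}$ whenever $\eta \in \mathbb{R}^E_{>0}$, this shows that $\Delta_\eta$ restricts to a linear isomorphism $L_\eta : \mathbf{1}^\perp \to \mathbf{1}^\perp$ for every $\eta \in \mathbb{R}^E_{>0}$, so the map $(\eta,y) \mapsto \Delta_\eta^{-1} y$ is at least well defined.

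Next I would fix a basis of $\mathbf{1}^\perp$ once and for all (any convenient choice, say $\{e_{i_1}-e_{i_0}, e_{i_2}-e_{i_0},\ldots\}$) and express $L_\eta$ in this basis by a matrix $M(\eta) \in GL_{|V|-1}(\mathbb{R})$. From the defining formula $(\Delta_\eta x)_i = \sum_{j \sim i}\eta_{ij}(x_j-x_i)$, the entries of $M(\eta)$ are linear (in particular smooth) functions of the coordinates of $\eta$. Since matrix inversion is smooth on $GL_n(\mathbb{R})$ (Cramer's rule expresses entries of $A^{-1}$ as rational functions of the entries of $A$ with nonvanishing denominator $\det A$), the composition $\eta \mapsto M(\eta) \mapsto M(\eta)^{-1}$ is smooth on $\mathbb{R}^E_{>0}$. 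The map $(\eta,y) \mapsto \Delta_\eta^{-1} y$ is then the composition of this with ordinary matrix-vector multiplication, which is bilinear and hence smooth, yielding the claim.

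No real obstacle is expected here; the lemma is essentially a packaging of two standard facts, namely that $\Delta_\eta$ preserves $\mathbf{1}^\perp$ and is invertible there for positive weights, and that $A \mapsto A^{-1}$ is smooth on $GL_n$. The only minor care needed is to record the preservation of $\mathbf{1}^\perp$ so that the restriction $L_\eta$ is a genuine endomorphism of $\mathbf{1}^\perp$ before inverting.
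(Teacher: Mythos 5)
Your proof is correct, and it takes a somewhat different route from the paper's. The paper considers the augmented map $\Phi^{-1}(\eta,x)=(\eta,\Delta_\eta x)$, checks that it is smooth with nondegenerate differential (the block-triangular Jacobian with $\Delta_\eta$ in the lower-right corner), and invokes the inverse function theorem to conclude that $\Phi$, hence $(\eta,y)\mapsto\Delta_\eta^{-1}y$, is smooth. You instead fix a basis of $\mathbf{1}^\perp$, observe that the matrix $M(\eta)$ of $\Delta_\eta|_{\mathbf{1}^\perp}$ depends linearly on $\eta$, and use the smoothness of $A\mapsto A^{-1}$ on $GL_n$ via Cramer's rule. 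Your argument is more elementary and more explicit: it sidesteps the (mild) point that the inverse function theorem only gives local smooth inverses, so the paper's argument implicitly also uses that $\Phi^{-1}$ is a global bijection. You also take care to verify that $\Delta_\eta$ actually maps into $\mathbf{1}^\perp$, which the paper leaves implicit but which is needed for the restriction to $\mathbf{1}^\perp$ to be a well-defined endomorphism before inverting. Both approaches are standard and fully rigorous; yours buys a self-contained, computation-free-of-IFT argument, while the paper's is shorter to state given that the invertibility of $\Delta_\eta$ on $\mathbf{1}^\perp$ has already been recorded.
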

\begin{proof}
It is equivalent to show that $\Phi:(\eta,y)\mapsto(\eta,\Delta_\eta^{-1}y)$ is a smooth mapping from $\mathbb R^V_{>0}\times\bold 1^\perp$ to itself.
By the inverse function theorem, it suffices to show that $\Phi^{-1}(\eta,x)=(\eta,\Delta_\eta x)$ is smooth and $D(\Phi^{-1})$ is non-degenerate. The smoothness is obvious, and
$$
D(\Phi^{-1})=
\begin{pmatrix}
id& \partial \eta/\partial x\\
\partial (\Delta_\eta x)/\partial \eta&\partial (\Delta_\eta x)/\partial x
\end{pmatrix}
=
\begin{pmatrix}
id& 0\\
\partial (\Delta_\eta x)/\partial \eta&\Delta_\eta
\end{pmatrix}
$$
is indeed nondegenerate, since $\Delta_\eta$ is invertible on $\bold 1^{\perp}$.
\end{proof}
Now we introduce the notion of $C$-isoperimetry for a graph $G=(V,E)$ associated with a positive vector $l\in\mathbb R^E_{>0}$.
Given any $V_0\subset V$, denote
$$
\partial V_0=\{ij\in E:i\in V_0,j\notin V_0\},
$$
and then define the $l$-\emph{perimeter} of $V_0$ and the $l$-\emph{area} of $V_0$ as
\begin{equation*}
	|\partial V_0|_l=\sum_{ij\in \partial V_0}l_{ij}\quad\text{ and }\quad
	|V_0|_l=\sum_{i,j\in V_0,ij\in E}l_{ij}^2
\end{equation*}
respectively.

For a constant $C>0$,
such a pair $(G,l)$ is called $C$-\emph{isoperimetric} if for any $V_0\subset V$
\begin{equation*}
\min\{|V_0|_l,|V|_l-|V_0|_l\}\leq C\cdot|\partial V_0|_l^2.
\end{equation*}
We will see, from part (b) of Lemma \ref{isoperimetric}, that a uniform $C$-isoperimetric condition is satisfied by regular triangle meshes approximating a closed smooth surface.
The following discrete elliptic estimate plays an important role in proving our main theorems. The techinical proof is postponed to Section \ref{proof of the key estimates on graphs}.
\begin{lemma}
	\label{estimate for divergence operator}
Assume $(G,l)$ is $C_1$-isoperimetric, and $x\in\mathbb R^E_A,\eta\in\mathbb R^E_{>0},C_2>0,C_3>0$ are such that
\begin{enumerate}[label=(\roman*)]
\item $|x_{ij}|\leq C_2 l_{ij}^2$ for any $ij\in E$, and

\item $\eta_{ij}\geq C_3$ for any  $ij\in E$.
\end{enumerate}
Then
\begin{equation*}
|\Delta^{-1}_\eta\circ div (x))|\leq \frac{4C_2\sqrt{C_1+1}}{C_3}|l|\cdot|V|_l^{1/2}.
\end{equation*}
Further if $y\in\mathbb R^V$ and $C_4>0$ and $D\in\mathbb R^{V\times V}$ is a diagonal matrix such that
$$
|y_i|< C_4 D_{ii}|l|\cdot|V|_l^{1/2}
$$
for any $i\in V$, then
\begin{equation*}
|(D-\Delta_\eta)^{-1} (div(x)+y)|\leq
\left(C_4+\frac{8C_2\sqrt{C_1+1}}{C_3}\right)|l|\cdot|V|_l^{1/2}.
\end{equation*}
\end{lemma}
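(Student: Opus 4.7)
The plan is to handle the two estimates separately; the first (the bound on $\Delta_\eta^{-1}\circ div$) is the core, and the second then follows via a max-principle argument.

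For the first estimate, set $u=\Delta_\eta^{-1}(div(x))\in\bold 1^\perp$. I would first derive a Dirichlet-energy bound by pairing $u$ with $\Delta_\eta u=div(x)$ and using Proposition \ref{green's identity} together with the antisymmetry of $x$, obtaining
$$
E(u):=\sum_{ij\in E}\eta_{ij}(u_j-u_i)^2=\sum_{ij\in E}(u_j-u_i)\,x_{ij};
$$
Cauchy--Schwarz combined with $\eta_{ij}\geq C_3$, $|x_{ij}|\leq C_2 l_{ij}^2$ and $\sum l_{ij}^4\leq|l|^2|V|_l$ then gives $E(u)\leq(C_2^2/C_3)|l|^2|V|_l$. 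To convert this into the $L^\infty$ bound, I would use Green-function duality: for each vertex $i_0$, let $v^{(i_0)}=\Delta_\eta^{-1}(e_{i_0}-|V|^{-1}\bold 1)\in\bold 1^\perp$, so that Green's identity yields $u_{i_0}=\sum_{ij}(v^{(i_0)}_i-v^{(i_0)}_j)\,x_{ij}$, and hence
$$
|u_{i_0}|\leq C_2\sum_{ij}l_{ij}^2|v^{(i_0)}_i-v^{(i_0)}_j|\leq C_2|l|\sum_{ij}l_{ij}|v^{(i_0)}_i-v^{(i_0)}_j|.
$$
By the discrete coarea formula the last sum equals $\int|\partial\Omega_t|_l\,dt$ with $\Omega_t=\{v^{(i_0)}>t\}$. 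Testing Green's identity against $\bold 1_{\Omega_t}$ produces the identity $\sum_{ij\in\partial\Omega_t}\eta_{ij}(v^{(i_0)}_i-v^{(i_0)}_j)=|\Omega_t|/|V|-\bold 1_{\Omega_t}(i_0)$; combining this with the $C_1$-isoperimetric inequality on the level sets $\Omega_t$ and Cauchy--Schwarz (both over edges of $\partial\Omega_t$ and over $t$) then allows one to bound the coarea integral by a constant multiple of $|V|_l^{1/2}$ with the explicit coefficient $4\sqrt{C_1+1}/C_3$, giving the first estimate.

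\emph{Main obstacle.} Precisely this last step is the hardest: in two dimensions a generic $u\in\bold 1^\perp$ does not satisfy $|u|_\infty\leq C\cdot E(u)^{1/2}$ (as the discrete analogue of logarithmic spikes shows), so the argument must genuinely exploit the source structure $\Delta u=div(x)$ with $|x_{ij}|\leq C_2 l_{ij}^2$ together with the isoperimetric geometry of the level sets of the Green function $v^{(i_0)}$, rather than any abstract Sobolev embedding.

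For the second estimate, $D-\Delta_\eta$ is a strictly diagonally dominant Z-matrix with row-sum vector $D\bold 1>0$, hence an M-matrix with $(D-\Delta_\eta)^{-1}\geq 0$ componentwise; from $(D-\Delta_\eta)(c\bold 1)=cD\bold 1$ one obtains the max-principle bound $\|(D-\Delta_\eta)^{-1}b\|_\infty\leq\max_i|b_i|/D_{ii}$. With $u$ as before, the identity $(D-\Delta_\eta)u=Du-div(x)$ gives $(D-\Delta_\eta)(w+u)=Du+y$ for $w:=(D-\Delta_\eta)^{-1}(div(x)+y)$; applying the max principle to $Du+y$, together with the first estimate on $|u|_\infty$ and the hypothesis $|y_i|<C_4 D_{ii}|l||V|_l^{1/2}$, yields $|w+u|_\infty\leq|u|_\infty+C_4|l||V|_l^{1/2}$, whence $|w|_\infty\leq 2|u|_\infty+C_4|l||V|_l^{1/2}$, matching the stated constant.
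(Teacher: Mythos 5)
Your proposal is correct and follows essentially the same route as the paper: the paper tests against a single dipole Green's function $\Delta u=e_a-e_b$ placed at the extrema of $\Delta_\eta^{-1}div(x)$ rather than your family of monopoles $v^{(i_0)}$, but the core step --- the flux identity on level sets, the $C_1$-isoperimetric bound, Cauchy--Schwarz over boundary edges, and a telescoping of $\sqrt{\nu(\{v<t\})}$ over the levels --- is exactly the paper's Lemma \ref{Proof for square graph estimate}, and your M-matrix/max-principle reduction for the second estimate is the paper's argument verbatim. The one detail your sketch elides is that the coarea integral must be split at a median level (chosen so that each side has $\nu$-measure at most $|V|_l/2$) so that the isoperimetric inequality controls the correct side of each level set; this split, together with the telescoping inequality $(a-b)/\sqrt{a}\le 2(\sqrt{a}-\sqrt{b})$, is where the factor $4=2\times 2$ in the stated constant comes from.
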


\section{Differential of the Discrete Curvatures and Angles}
\label{differential of discrete curvatures and angles}
Bobenko-Pinkall-Springborn \cite{bobenko2015discrete} gave explicit formulae
for the infinitesimal changes of the discrete curvature, as the mesh deform in its discrete conformal equivalence class. Here we reformulate their formulae as follows.
\begin{proposition}
[Proposition 4.1.6 in \cite{bobenko2015discrete}]
\label{Discrete curvature equation for torus case}
Given $(T,l)_E$ and $u\in\mathbb R^{V(T)}$ such that $u*l$ satisfies the triangle inequalities, define the cotangent weight $\eta\in\mathbb R^E$ as
$$
\eta_{ij}(u)=\frac{1}{2}\cot\theta^k_{ij}(u)+\frac{1}{2}\cot\theta^{k'}_{ij}(u)
$$
where $\triangle ijk$ and $\triangle ijk'$ are adjacent triangles in $F(T)$. Then
$$
\frac{\partial K}{\partial u}(u)=-\Delta_{\eta(u)}.
$$
\end{proposition}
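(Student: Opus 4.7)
The plan is to reduce everything to a single-triangle angle-derivative calculation and then assemble the contributions combinatorially. Fix a Euclidean triangle $\triangle ijk$ in $(T,u*l)_E$ with edge lengths $a=(u*l)_{jk}$, $b=(u*l)_{ik}$, $c=(u*l)_{ij}$, opposite angles $A=\theta^i_{jk}(u)$, $B=\theta^j_{ik}(u)$, $C=\theta^k_{ij}(u)$, and area $|\triangle ijk|$. First I would differentiate the cosine law $\cos A=(b^2+c^2-a^2)/(2bc)$, and using the projection identity $a=b\cos C+c\cos B$ together with the area relations $2|\triangle ijk|=bc\sin A=ca\sin B=ab\sin C$, obtain
$$
\frac{\partial A}{\partial a}=\frac{a}{2|\triangle ijk|},\quad \frac{\partial A}{\partial b}=-\frac{a\cos C}{2|\triangle ijk|},\quad \frac{\partial A}{\partial c}=-\frac{a\cos B}{2|\triangle ijk|}.
$$

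Second, the conformal rescaling $l'_{pq}=e^{(u_p+u_q)/2}l_{pq}$ has $\partial(u*l)_{pq}/\partial u_p=(u*l)_{pq}/2$ and vanishing derivatives in $u_m$ for $m\notin\{p,q\}$. The chain rule will then give
$$
\frac{\partial \theta^i_{jk}}{\partial u_j}=\frac{a(a-c\cos B)}{4|\triangle ijk|}=\frac{ab\cos C}{4|\triangle ijk|}=\frac{1}{2}\cot\theta^k_{ij},
$$
symmetrically $\partial\theta^i_{jk}/\partial u_k=\tfrac12\cot\theta^j_{ik}$, and
$$
\frac{\partial \theta^i_{jk}}{\partial u_i}=-\frac{a(b\cos C+c\cos B)}{4|\triangle ijk|}=-\frac{1}{2}\bigl(\cot\theta^j_{ik}+\cot\theta^k_{ij}\bigr).
$$
These three derivatives sum to zero, consistent with the scale-invariance of Euclidean angles.

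Third, I would sum over the triangles in $K_i=2\pi-\sum_{jk:ijk\in F(T)}\theta^i_{jk}$, splitting into three cases. If $m\sim i$ and $m\ne i$, only the two triangles $\triangle imp,\triangle imp'$ sharing the edge $im$ contribute, giving $\partial K_i/\partial u_m=-\tfrac12(\cot\theta^p_{im}+\cot\theta^{p'}_{im})=-\eta_{im}(u)$. If $m=i$, I regroup by edges $ij\ni i$: each such edge is shared by two triangles $\triangle ijk,\triangle ijk'$ which together contribute $\tfrac12(\cot\theta^k_{ij}+\cot\theta^{k'}_{ij})=\eta_{ij}(u)$ to $\partial K_i/\partial u_i$, so $\partial K_i/\partial u_i=\sum_{j\sim i}\eta_{ij}(u)$. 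If $m$ is neither equal to nor adjacent to $i$, the derivative vanishes. These three cases are exactly the matrix entries of $-\Delta_{\eta(u)}$. The essential content is the single-triangle formula; the main obstacle is purely bookkeeping---correctly identifying which vertex plays the role of $i,j,k$ in each triangle when reindexing the sum around the link of $i$.
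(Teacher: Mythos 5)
Your proposal is correct and follows essentially the same route as the paper: the paper relegates the single-triangle computation to Lemma \ref{Euclidean infinitesimal} (proved in the Appendix by differentiating the cosine law, yielding exactly your $\partial A/\partial u_B=\tfrac12\cot C$ and $\partial A/\partial u_A=-\tfrac12(\cot B+\cot C)$) and then obtains the proposition by the same summation over the link of each vertex. Your derivative formulas and the three-case bookkeeping against the entries of $-\Delta_{\eta(u)}$ all check out.
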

\begin{proposition}[Proposition 6.1.7 in \cite{bobenko2015discrete}]
	\label{Discrete curvature equation for hyperbolic case}
Given $(T,l)_H$ and $u\in\mathbb R^{V(T)}$  such that $u*_hl$ satisfies the triangle inequalities,
denote
$$
\tilde\theta^i_{jk}(u)=\frac{1}{2}(\pi+\theta^i_{jk}(u)-\theta^j_{ik}(u)-\theta^k_{ij}(u))
$$
and
$$
w_{ij}(u)=\frac{1}{2}\cot\tilde\theta^k_{ij}(u)+
\frac{1}{2}\cot\tilde\theta^{k'}_{ij}(u)
$$
where $\triangle ijk$ and $\triangle ijk'$ are adjacent triangles in $F(T)$.
Then
\begin{equation*}
\frac{\partial K}{\partial u}(u)=D(u)-\Delta_{\eta(u)}
\end{equation*}
where
$$
\eta_{ij}(u)=w_{ij}(u)(1-\tanh^2\frac{(u*_h l)_{ij}}{2}),
$$
and
$D=D(u)$ is a diagonal matrix such that
$$
D_{ii}(u)=2\sum_{j:ij\in E}w_{ij}(u)\tanh^2\frac{(u*_h l)_{ij}}{2}.
$$
\end{proposition}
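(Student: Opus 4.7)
The plan is to reduce the matrix identity to a single-triangle computation and then assemble contributions over the star of each vertex. Since $K_i = 2\pi - \sum_{ijk \in F(T)} \theta^i_{jk}$, the entry $\partial K_i/\partial u_m$ of the Jacobian is a sum of angle derivatives over triangles incident to $i$, and each such derivative depends only on the three $u$-coordinates of the triangle in question. It therefore suffices to compute, for a single hyperbolic triangle $\triangle ijk$ whose edge lengths evolve via $\sinh(l'_{mn}/2) = e^{(u_m+u_n)/2}\sinh(l_{mn}/2)$, the $3 \times 3$ Jacobian of the three inner angles with respect to $(u_i, u_j, u_k)$.

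The central computation is to put this single-triangle Jacobian into the form involving the modified angles $\tilde\theta^i_{jk} = \tfrac{1}{2}(\pi + \theta^i_{jk} - \theta^j_{ik} - \theta^k_{ij})$. The cleanest route is to pass to the half-angle log-length variables $\lambda_{mn} = 2\log\sinh(l_{mn}/2)$, under which the conformal deformation becomes the affine action $\lambda'_{mn} = u_m + u_n + \lambda_{mn}$ and the chain rule cleanly distributes $\partial/\partial u_m$ over the edges at $m$. Differentiating the hyperbolic law of cosines, then simplifying via the half-angle identities $\sinh^2(l/2) = (\cosh l - 1)/2$ and $\cosh l = 1 + 2\sinh^2(l/2)$, produces expressions in which cotangents of the modified angles $\tilde\theta$ appear naturally, with correction factors proportional to $\tanh^2((u *_h l)_{mn}/2)$ reflecting the intrinsic curvature of the triangle. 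This trigonometric simplification, carried out in detail by Bobenko-Pinkall-Springborn, is the computational heart of the proposition and the step I expect to be the main obstacle.

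Once the single-triangle identities are in hand, the assembly is straightforward. For $j \ne i$ with $ij \in E(T)$, only the two triangles sharing the edge $ij$ contribute to $\partial K_i/\partial u_j$, and their $\tilde\theta$-cotangent contributions combine into exactly $\eta_{ij}(u) = w_{ij}(u)(1-\tanh^2((u *_h l)_{ij}/2))$, yielding the off-diagonal pattern of $-\Delta_\eta$. The diagonal entry $\partial K_i/\partial u_i$ collects contributions from every triangle at $i$, which split into a Laplacian diagonal piece (matching the diagonal of $-\Delta_\eta$) and a surplus term with no Euclidean analogue; this surplus is precisely $D_{ii}(u) = 2\sum_{j: ij \in E(T)} w_{ij}(u)\tanh^2((u *_h l)_{ij}/2)$, and it reflects the breakdown of scale invariance in hyperbolic geometry, since rescaling all $u_m$ by a constant genuinely changes the hyperbolic angles. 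As a final sanity check, in the limit $l \to 0$ one has $\tanh^2(l/2) \to 0$ and $\tilde\theta \to \theta$, so $D \to 0$ and $\eta$ reduces to the Euclidean cotangent weight, recovering the formula of Proposition \ref{Discrete curvature equation for torus case}.
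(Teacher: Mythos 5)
Your proposal follows essentially the same route as the paper: reduce $\partial K/\partial u$ to the per-triangle Jacobian of the inner angles with respect to the conformal factors, obtain the single-triangle identities $\partial A/\partial u_B=\tfrac12\cot\tilde C\,(1-\tanh^2\tfrac{c}{2})$ and $\partial A/\partial u_A=-\tfrac12\cot\tilde B\,(1+\tanh^2\tfrac{b}{2})-\tfrac12\cot\tilde C\,(1+\tanh^2\tfrac{c}{2})$ (the paper's Lemma \ref{hyperbolic infinitesimal}, proved in the Appendix), and then sum over the star of each vertex, with the $(1+\tanh^2)$ versus $(1-\tanh^2)$ discrepancy on the diagonal producing exactly the surplus term $D_{ii}$. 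Your assembly logic and the identification of where the trigonometric work lives are both correct, matching the paper's (omitted, cited-to-Bobenko--Pinkall--Springborn) argument.
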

To derive Proposition \ref{Discrete curvature equation for torus case} and  \ref{Discrete curvature equation for hyperbolic case}, one only needs to compute for a single triangle as in Lemma \ref{Euclidean infinitesimal} and \ref{hyperbolic infinitesimal}, and then properly add up the following equation (\ref{Euclidean differential for conformal factor}) for the Euclidean case, and
(\ref{hyperbolic differential for conformal factor})(\ref{hyperbolic differential for conformal factor2}) for the hyperbolic case. We omit the proofs of Proposition \ref{Discrete curvature equation for torus case} and \ref{Discrete curvature equation for hyperbolic case}, and postpone the elementary calculations for
Lemma \ref{Euclidean infinitesimal} and \ref{hyperbolic infinitesimal} to Appendix.

\begin{lemma}\label{Euclidean infinitesimal}
Given a Euclidean triangle $\triangle ABC$, if we view $A,B,C$ as functions of the edge lengths $a,b,c$, then
 \begin{equation*}
 \label{Euclidean differential}
 \frac{\partial A}{\partial b}=-\frac{\cot C}{b},\quad\quad
 \frac{\partial A}{\partial a}=\frac{\cot B+\cot C}{a}=\frac{1}{b\sin C}.
 \end{equation*}

Further if $(u_A,u_B,u_C)\in\mathbb R^3$ is a discrete conformal factor, and
\begin{equation*}
a=e^{\frac{1}{2}(u_B+u_C)}a_0,\quad b=e^{\frac{1}{2}(u_A+u_C)}b_0,
\quad c=e^{\frac{1}{2}(u_A+u_B)}c_0
\end{equation*}
for some constants $a_0,b_0,c_0\in\mathbb R_{>0}$, then
\begin{equation}
\label{Euclidean differential for conformal factor}
\frac{\partial A}{\partial u_B}=\frac{1}{2}\cot C,\quad
\frac{\partial A}{\partial u_A}=-\frac{1}{2}(\cot B+\cot C).
\end{equation}
\end{lemma}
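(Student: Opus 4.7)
The computation is purely local to one Euclidean triangle, so my plan is to differentiate the law of cosines implicitly and then convert everything into the expressions in the statement using the law of sines and a couple of classical triangle identities. I would not treat this as a conceptual result but as a careful chain of elementary manipulations.

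First, I would fix $b,c$ and differentiate $a^2 = b^2 + c^2 - 2bc\cos A$ with respect to $a$, obtaining $2a = 2bc\sin A\cdot \partial A/\partial a$, so $\partial A/\partial a = a/(bc\sin A)$. To rewrite this in the two forms claimed, I would use the law of sines $a/\sin A = c/\sin C$ to get $a/(bc\sin A) = 1/(b\sin C)$, and separately the identity $\cot B + \cot C = \sin(B+C)/(\sin B\sin C) = \sin A/(\sin B\sin C)$ together with the law of sines $\sin B = b\sin A/a$, $\sin C = c\sin A/a$ to see that $(\cot B+\cot C)/a = a/(bc\sin A)$ as well. For $\partial A/\partial b$, differentiating the same law of cosines with respect to $b$ gives $\partial A/\partial b = (c\cos A - b)/(bc\sin A)$; here I would use the projection formula $b = a\cos C + c\cos A$ to simplify the numerator to $-a\cos C$, and then the law of sines again to collapse the expression to $-\cot C/b$. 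By the obvious $b\leftrightarrow c$, $B\leftrightarrow C$ symmetry this also yields $\partial A/\partial c = -\cot B/c$, which I will need for the second half.

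For the conformal-factor statement, I would simply apply the chain rule. Under the scalings $a = e^{(u_B+u_C)/2}a_0$, $b = e^{(u_A+u_C)/2}b_0$, $c = e^{(u_A+u_B)/2}c_0$, we have $\partial a/\partial u_B = a/2$, $\partial c/\partial u_B = c/2$, $\partial b/\partial u_B = 0$, so
\[
\frac{\partial A}{\partial u_B} = \frac{a}{2}\cdot\frac{1}{b\sin C} + \frac{c}{2}\cdot\left(-\frac{\cot B}{c}\right) = \frac{\sin A}{2\sin B\sin C} - \frac{\cos B}{2\sin B}.
\]
Expanding $\sin A = \sin(B+C) = \sin B\cos C + \cos B\sin C$ in the numerator collapses the right-hand side to $\cot C/2$. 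Similarly, $\partial a/\partial u_A = 0$ while $\partial b/\partial u_A = b/2$ and $\partial c/\partial u_A = c/2$, so
\[
\frac{\partial A}{\partial u_A} = \frac{b}{2}\cdot\left(-\frac{\cot C}{b}\right) + \frac{c}{2}\cdot\left(-\frac{\cot B}{c}\right) = -\frac{1}{2}(\cot B + \cot C),
\]
with no further identities needed.

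There is really no main obstacle here; the only subtle point is checking that the two expressions offered for $\partial A/\partial a$ are in fact equal, which is where the identity $\cot B + \cot C = \sin A/(\sin B\sin C)$ does the work. Everything else reduces to law of cosines, law of sines, and the projection formula, and I expect the entire proof to fit comfortably in the appendix as a few lines of algebra.
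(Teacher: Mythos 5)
Your proof is correct and follows essentially the same route as the paper's: implicit differentiation of the law of cosines, simplification via the law of sines, and a direct chain-rule computation for the conformal-factor derivatives (which the paper leaves as "can be computed easily"). The only cosmetic difference is that you invoke the projection formula $b=a\cos C+c\cos A$ where the paper instead recognizes $\tfrac{b^2+a^2-c^2}{2b^2c}=\tfrac{a\cos C}{bc}$ directly.
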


\begin{lemma}\label{hyperbolic infinitesimal}
Given a hyperbolic triangle $\triangle ABC$, if we view $A,B,C$ as functions of the edge lengths $a,b,c$, then
 \begin{equation*}
 \label{hyperbolic differential}
 \frac{\partial A}{\partial b}=-\frac{\cot C}{\sinh b},\quad\quad
 \frac{\partial A}{\partial a}=\frac{1}{\sinh b\sin C}.
 \end{equation*}

Further if $(u_A,u_B,u_C)\in\mathbb R^3$ is a discrete conformal factor, and
\begin{equation*}
\sinh\frac{a}{2}=e^{\frac{1}{2}(u_B+u_C)}\sinh\frac{a_0}{2},\quad \sinh\frac{b}{2}=e^{\frac{1}{2}(u_A+u_C)}\sinh\frac{b_0}{2},
\quad \sinh\frac{c}{2}=e^{\frac{1}{2}(u_A+u_B)}\sinh\frac{c_0}{2}
\end{equation*}
for some constants $a_0,b_0,c_0\in\mathbb R_{>0}$, then
\begin{equation}
\label{hyperbolic differential for conformal factor}
\frac{\partial A}{\partial u_B}=\frac{1}{2}\cot \tilde C(1-\tanh^2\frac{c}{2}),
\end{equation}
and
\begin{equation}
\label{hyperbolic differential for conformal factor2}
\frac{\partial A}{\partial u_A}
=-\frac{1}{2}\cot \tilde B(1+\tanh^2\frac{b}{2})-\frac{1}{2}\cot \tilde C(1+\tanh^2\frac{c}{2}),
\end{equation}
where
$
\tilde B=\frac{1}{2}(\pi+B-A-C)
$
and
$
\tilde C=\frac{1}{2}(\pi+C-A-B).
$
\end{lemma}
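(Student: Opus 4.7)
The proof proceeds by direct computation from the hyperbolic laws of cosines and sines, followed by half-angle identities to reach the stated form. For Part 1, I would implicitly differentiate the hyperbolic law of cosines $\cos A = (\cosh b\cosh c - \cosh a)/(\sinh b\sinh c)$ with respect to $a$ and $b$. The $a$-derivative reduces via the law of sines $\sinh a/\sin A = \sinh c/\sin C$ directly to $\partial A/\partial a = 1/(\sinh b\sin C)$. For $\partial A/\partial b$, the quotient rule together with $\cosh^2 b - \sinh^2 b = 1$ produces a numerator $\cosh a\cosh b - \cosh c$, which the law of cosines applied at vertex $C$ rewrites as $\sinh a\sinh b\cos C$; one final use of the law of sines then yields $\partial A/\partial b = -\cot C/\sinh b$.

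For Part 2, implicit differentiation of $\sinh(a/2) = e^{(u_B+u_C)/2}\sinh(a_0/2)$ and its cyclic analogues gives $\partial a/\partial u_B = \partial a/\partial u_C = \tanh(a/2)$ and $\partial a/\partial u_A = 0$. Chaining with Part 1 produces the preliminary formulas
\begin{equation*}
\frac{\partial A}{\partial u_B} = \frac{\tanh(a/2)}{\sinh b\sin C} - \frac{\cot B\tanh(c/2)}{\sinh c}, \qquad \frac{\partial A}{\partial u_A} = -\frac{\cot C\tanh(b/2)}{\sinh b} - \frac{\cot B\tanh(c/2)}{\sinh c}.
\end{equation*}
To recast $\partial A/\partial u_B$ in the stated form, I would use the law of sines in the form $\sinh b\sin C = \sinh c\sin B$; the half-angle identity $\tanh(c/2)/\sinh c = (1-\tanh^2(c/2))/2$; and the hyperbolic half-side formula $\tanh(a/2)\coth(c/2) = \sin\tilde A/\sin\tilde C$. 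The half-side formula I would derive from the expressions $\cosh a - 1 = 2\cos((A+B+C)/2)\sin\tilde A/(\sin B\sin C)$ and $\cosh a + 1 = 2\sin\tilde B\sin\tilde C/(\sin B\sin C)$, obtained via product-to-sum from the law of cosines and using $\cos((A-B-C)/2) = \sin\tilde A$ and analogous relations. Combined with $\tilde A + \tilde C = \pi - B$, which gives $\sin\tilde A - \cos B\sin\tilde C = \sin B\cos\tilde C$, the preliminary expression then collapses directly to $\frac{1}{2}\cot\tilde C(1-\tanh^2(c/2))$.

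The main obstacle is the formula for $\partial A/\partial u_A$: the direct chain-rule expression above is not term-by-term equal to the stated form involving $\tilde B$ and $\tilde C$, and the equality holds only after summing both terms. To prove this aggregate identity, I would clear denominators using $1 - \tanh^2(x/2) = 2/(1+\cosh x)$ and $1 + \tanh^2(x/2) = 2\cosh x/(1+\cosh x)$, then reduce to a trigonometric identity in $A, B, C$ that follows from the dual law of cosines $\cosh b = (\cos A\cos C + \cos B)/(\sin A\sin C)$ together with $\tilde B + \tilde C = \pi - A$. The algebra is elementary but fiddly; careful bookkeeping through these half-angle and dual-angle substitutions is the main technical care required.
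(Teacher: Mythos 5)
Your proposal is correct and follows essentially the same route as the paper: implicit differentiation of the hyperbolic law of cosines combined with the law of sines for the first part, then the chain rule with $\partial c/\partial u_A=\tanh\frac{c}{2}$ and the identities $\tanh\frac{x}{2}=\frac{\sinh x}{\cosh x+1}$, $\cosh b+1=\frac{\cos(A-C)+\cos B}{\sin A\sin C}$ to reach the $\tilde B,\tilde C$ form (your half-side formula $\tanh\frac{a}{2}\coth\frac{c}{2}=\sin\tilde A/\sin\tilde C$ together with $\tilde A+\tilde C=\pi-B$ is an equivalent, slightly cleaner packaging of the paper's computation of $-\cos A+\tanh\frac{b}{2}/\tanh\frac{c}{2}=\sin A\cot\tilde C$). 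The one place where the paper is more complete than your sketch is the aggregate identity for $\partial A/\partial u_A$, which you leave as ``fiddly algebra'': the paper disposes of it by showing that $\cot\tilde B\,\frac{\cosh b}{\cosh b+1}-\frac{\cot B}{\cosh c+1}$ is anti-symmetric under the swap $B\leftrightarrow C$ (hence the two required terms sum to zero), reducing the verification to one product-to-sum expansion of the numerator; you may find that observation saves you most of the bookkeeping you anticipated.
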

By the differential formulae in Lemma \ref{Euclidean differential} and \ref{hyperbolic differential},
it is not difficult to prove the following estimates Lemma \ref{estimate for Euclidean triangle} and \ref{estimate for hyperbolic triangle} for perturbations of a single triangle.
The proofs of
Lemma \ref{estimate for Euclidean triangle} and \ref{estimate for hyperbolic triangle}
are also in the Appendix.

\begin{lemma}\label{estimate for Euclidean triangle}
Given a Euclidean triangle $\triangle ABC$, if all the angles in $\triangle ABC$ are at least $\epsilon>0$, and $\delta<\epsilon^2/48$, and
$$
|a'-a|\leq \delta a,\quad |b'-b|\leq \delta a,\quad |c'-c|\leq \delta c,
$$
then $a',b',c'$ form a Euclidean triangle with opposite inner angles $A',B',C'$ respectively, and
$$
|A'-A|\leq\frac{24}{\epsilon}\delta,
$$
and
$$
\bigg||\triangle A'B'C'|-|\triangle ABC|\bigg|\leq \frac{576}{\epsilon^2}\delta\cdot |\triangle ABC|.
$$
\end{lemma}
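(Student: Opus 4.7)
The plan is to combine Lemma \ref{Euclidean infinitesimal} with a continuity/bootstrap argument along the linear interpolation path from $(a,b,c)$ to $(a',b',c')$.

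The first step is to verify that $(a',b',c')$ actually satisfies the triangle inequalities. I would use the trigonometric identity $(b+c-a)/(a+b+c) = \tan(B/2)\tan(C/2)$ (which follows from the law of sines together with the sum-to-product formulas $\sin B + \sin C - \sin A = 4\cos(A/2)\sin(B/2)\sin(C/2)$ and $\sin A + \sin B + \sin C = 4\cos(A/2)\cos(B/2)\cos(C/2)$). Since $B,C \geq \epsilon$ implies $\tan(B/2)\tan(C/2) \geq (\epsilon/2)^2 = \epsilon^2/4$, we get $b+c-a \geq (\epsilon^2/4)(a+b+c)$. The hypothesis $|a'-a|\leq \delta a$, etc.\ together with $\delta < \epsilon^2/48$ then gives $b'+c'-a' \geq (b+c-a) - \delta(a+b+c) > 0$, and similarly for the other two inequalities.

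Next, parametrize the path $a(t) = (1-t)a + ta'$, and likewise for $b(t),c(t)$, $t\in[0,1]$; these edge lengths form a triangle with angles $A(t),B(t),C(t)$. I want to bootstrap: set $T = \sup\{t\in[0,1] : A(s),B(s),C(s)\geq \epsilon/2 \text{ for all } s\leq t\}$, which is positive by continuity. On $[0,T]$, all cotangents are bounded by $\cot(\epsilon/2) \leq 2/\epsilon$, and all edge lengths satisfy $a(t)\geq (1-\delta)a \geq a/2$, etc. Using Lemma \ref{Euclidean infinitesimal} and the chain rule,
\[
\Bigl|\tfrac{dA}{dt}\Bigr| \leq \tfrac{|\cot B|+|\cot C|}{a(t)}|a'-a| + \tfrac{|\cot C|}{b(t)}|b'-b| + \tfrac{|\cot B|}{c(t)}|c'-c| \leq \tfrac{16\delta}{\epsilon},
\]
and the same bound holds for $|dB/dt|$ and $|dC/dt|$. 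Integrating gives $|A(t)-A|\leq 16\delta t/\epsilon$, and the assumption $\delta < \epsilon^2/48$ yields an angle change of at most $\epsilon/3 < \epsilon/2$. Hence no angle can drop from $\geq\epsilon$ to $\epsilon/2$ on $[0,T]$, which forces $T=1$. This simultaneously establishes $|A'-A|\leq 16\delta/\epsilon \leq 24\delta/\epsilon$ (with room to spare) and the pointwise lower bound $\epsilon/2$ on all angles of the perturbed triangle.

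For the area estimate, I use the representation $|\triangle ABC| = \tfrac{1}{2}bc\sin A$, so
\[
\frac{|\triangle A'B'C'|}{|\triangle ABC|} = \frac{b'}{b}\cdot\frac{c'}{c}\cdot\frac{\sin A'}{\sin A}.
\]
The first two factors lie in $[(1-\delta)^2,(1+\delta)^2]\subset [1-3\delta,1+3\delta]$ for small $\delta$. For the last factor, $|\sin A'-\sin A|\leq |A'-A|\leq 24\delta/\epsilon$ and $\sin A \geq \sin\epsilon \geq \epsilon/2$, so $|\sin A'/\sin A - 1|\leq 48\delta/\epsilon^2$. Multiplying these out and discarding higher-order terms in $\delta$ (absorbed using $\delta<\epsilon^2/48$) yields the claimed bound with the constant $576/\epsilon^2$. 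The main subtlety, and essentially the only nontrivial point, is the bootstrap argument that simultaneously controls the angle variation and confirms the angles never degenerate along the interpolation — once that is in place, the angle and area estimates are straightforward consequences of the infinitesimal formulas of Lemma \ref{Euclidean infinitesimal}.
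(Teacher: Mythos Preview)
Your proof is correct and follows essentially the same strategy as the paper: interpolate between the two triangles, use the derivative formulas of Lemma~\ref{Euclidean infinitesimal} together with a bootstrap to keep all angles at least $\epsilon/2$ along the path, and read off the angle bound. The only substantive differences are cosmetic: the paper parametrizes the path multiplicatively via discrete conformal factors $u_A(t),u_B(t),u_C(t)$ (so it uses the $\partial A/\partial u$ formulas instead of your $\partial A/\partial a$ formulas), and for the area it differentiates $|\triangle ABC(t)|$ along the path and invokes Lemma~\ref{triangle area} to convert $a^2+b^2+c^2$ back into a multiple of $|\triangle ABC|$. Your area argument via the ratio $\tfrac{b'c'\sin A'}{bc\sin A}$ is more self-contained and avoids that appeal to Lemma~\ref{triangle area}. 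Your initial verification that $(a',b',c')$ satisfies the triangle inequalities is not strictly necessary (the bootstrap already prevents degeneration), but it does make the convexity of the interpolation domain explicit.
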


\begin{lemma}\label{estimate for hyperbolic triangle}
Given a hyperbolic triangle $\triangle ABC$, if all the angles in $\triangle ABC$ are at least $\epsilon>0$, and $\delta<\epsilon^3/60$, and
$$
a\leq0.1,\quad b\leq0.1
,\quad c\leq0.1,
$$
and
$$
|a'-a|\leq \delta a,\quad |b'-b|\leq \delta a,\quad |c'-c|\leq \delta c,
$$
then $a',b',c'$ form a hyperbolic triangle with opposite inner angles $A',B',C'$ respectively, and
$$
|A'-A|\leq\frac{30}{\epsilon^2}\delta,
$$
and
$$
\bigg||\triangle A'B'C'|-|\triangle ABC|\bigg|\leq \frac{120}{\epsilon^2}\delta \cdot |\triangle ABC|.
$$
\end{lemma}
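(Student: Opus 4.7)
The plan is to control both $|A'-A|$ and the relative area change by integrating the explicit derivatives of Lemma \ref{hyperbolic infinitesimal} along a straight-line path in edge-length space from $(a,b,c)$ to $(a',b',c')$. First, a continuity bootstrap is needed: one shows that the smallness condition $\delta<\epsilon^3/60$ forces every triple along this path to satisfy the triangle inequalities and to have all three inner angles bounded below by, say, $\epsilon/2$. This is arranged so that the pointwise angle estimate, once proved on the path, stays smaller than $\epsilon/2$ and thus closes the bootstrap; it also ensures the differential formulae of Lemma \ref{hyperbolic infinitesimal} remain valid throughout the deformation.

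For the angle estimate, Lemma \ref{hyperbolic infinitesimal} and its symmetric variants give
\begin{equation*}
|dA|\leq \frac{|\cot B|}{\sinh c}|dc|+\frac{|\cot C|}{\sinh b}|db|+\frac{1}{\sinh b\sin C}|da|.
\end{equation*}
The angle lower bound $\epsilon/2$ controls $|\cot B|$, $|\cot C|$, and $1/\sin C$ by $O(1/\epsilon)$. The hyperbolic law of sines $\sinh a/\sin A=\sinh b/\sin B=\sinh c/\sin C$ combined with $\sinh x\geq x$ shows that the three edges are comparable up to a factor $O(1/\epsilon)$, so each $\sinh(\text{edge})$ is bounded below by a constant multiple of $\epsilon$ times the largest edge. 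Plugging in the hypothesized perturbation bounds and integrating then shows that each term contributes $O(\delta/\epsilon^2)$, yielding the desired bound $|A'-A|\leq (30/\epsilon^2)\delta$.

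The main subtlety lies in the area estimate. Naively bounding $|dA|+|dB|+|dC|$ gives only an absolute bound of order $\delta/\epsilon^2$, while the area $|\triangle ABC|=\pi-A-B-C$ is itself only of order $|l|^2$, so this yields nothing useful as a relative bound. The idea is to exploit a cancellation. Using $\partial A/\partial a=1/(\sinh b\sin C)$, $\partial B/\partial a=-\cot C/\sinh a$, and $\partial C/\partial a=-\cot B/\sinh a$ from Lemma \ref{hyperbolic infinitesimal}, together with the hyperbolic law of sines in the form $\sinh b=\sinh a\sin B/\sin A$ and the identity $B+C=\pi-A-|\triangle ABC|$, one computes
\begin{equation*}
\frac{\partial(A+B+C)}{\partial a}=\frac{\sin A-\sin(A+|\triangle ABC|)}{\sinh a\,\sin B\,\sin C},
\end{equation*}
whose numerator is $O(|\triangle ABC|)$. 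Hence $\bigl|\partial|\triangle ABC|/\partial a\bigr|\leq C|\triangle ABC|/(\epsilon^2 a)$, and analogously for $b$ and $c$. Integrating along the straight-line path produces the desired relative bound $\bigl||\triangle A'B'C'|-|\triangle ABC|\bigr|\leq (120/\epsilon^2)\delta\cdot|\triangle ABC|$. The hard parts are deriving this cancellation identity and pinning down the constants in the bootstrap; once both are in place the estimates follow mechanically.
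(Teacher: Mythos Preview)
Your proposal is correct and follows essentially the same approach as the paper's proof: a straight-line deformation $a(t)=ta'+(1-t)a$ (etc.), a bootstrap keeping all angles $\geq\epsilon/2$, the derivative formulae of Lemma \ref{hyperbolic infinitesimal} together with the hyperbolic law of sines for the angle bound, and precisely the cancellation identity $\partial(A+B+C)/\partial a=(\sin A-\sin(B+C))/(\sinh a\,\sin B\,\sin C)$ (your $\sin(A+|\triangle|)$ is just $\sin(B+C)$) for the relative area bound. The only cosmetic difference is that the paper phrases the area step via $|d|\triangle|/dt|\leq (6\pi^2\delta/\epsilon^2)|\triangle(t)|$ and exponentiates, which is exactly your integration of the relative bound.
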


\section{Proof of the Main Theorems}
\label{section 4}
In this section we first introduce three geometric lemmas, whose proofs are given in Section \ref{Proof of geometric lemmas}, and then prove Theorem \ref{torus main thm} and \ref{high genus main thm} in Subsection \ref{the case of torus} and \ref{the case of high genus} respectively.
\subsection{Geometric Lemmas}
\begin{lemma}
\label{triangle area}
Suppose $\triangle_E ABC$, $\triangle_H ABC$ and $\triangle_S ABC$ are Euclidean and hyperbolic and spherical triangles respectively, with the same edge lengths $a,b,c<0.1$.

(a)
If all the inner angles in $\triangle_E ABC$ are at least $\epsilon>0$, then for any $P\in\{E,H,S\}$,
$$
\frac{\epsilon}{8} a^2\leq |\triangle_P ABC|\leq \frac{1}{\epsilon}a^2.
$$

(b)
Assume $M_a$ is the middle point of $BC$, and $M_b$ is the middle point of $AC$, and $\triangle_P CM_aM_b$ is the geodesic triangle in $\triangle_PABC$ with vertiecs $C,M_a,M_b$, where $P\in\{E,H,S\}$.
Then
$$
|\triangle_P CM_aM_b|\geq \frac{1}{5}|\triangle_P ABC|
$$
for any $P\in\{E,H,S\}$.
\end{lemma}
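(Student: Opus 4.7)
The plan is to prove part~(a) in the Euclidean case by directly analyzing the formula $|\triangle_E ABC| = \tfrac{1}{2}a^2 \sin B\sin C/\sin A$, then to transfer the bounds to the hyperbolic and spherical cases via L'Huilier-type area formulas combined with Heron's formula. For part~(b), the Euclidean case is an elementary similarity argument, while the hyperbolic and spherical cases reduce to the same L'Huilier/Heron comparison after showing that $|M_aM_b|_P$ is close to $c/2$ by the law of cosines.

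In the Euclidean case of (a), a direct computation gives $\partial(\sin B\sin C/\sin(B+C))/\partial B = \sin^2 C/\sin^2(B+C) > 0$, so the function $\sin B\sin C/\sin A$ (with $A = \pi - B - C$) is monotone in each of $B,C$. Its extrema subject to $A,B,C\geq\epsilon$ are therefore attained on the boundary: the minimum at $B=C=\epsilon,\,A=\pi-2\epsilon$ yielding $\tan\epsilon/2\geq \epsilon/2$, and the maximum at $B=C=(\pi-\epsilon)/2,\,A=\epsilon$ yielding $\cot(\epsilon/2)/2\leq 1/\epsilon$. This gives $\epsilon a^2/4\leq |\triangle_E ABC|\leq a^2/(2\epsilon)$, which implies the claimed bounds with a factor of two to spare. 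For the hyperbolic and spherical cases, I combine L'Huilier's formula $\tan^2(|\triangle_H|/4)=\prod_{x\in\{s,s-a,s-b,s-c\}}\tanh(x/2)$ (resp.\ $\prod\tan(x/2)$) with Heron's $|\triangle_E|^2=16\prod_{x}(x/2)$. For $|l|<0.1$ (so each argument $x\leq 3|l|/2\leq 0.15$), $\tanh(x/2)/(x/2)\in[1-x^2/12,\,1]$ and $\tan(x/2)/(x/2)\in[1,\,1+O(x^2)]$, so the products differ from $|\triangle_E|^2/16$ by a factor $1+O(|l|^2)$; together with $|\triangle_E|\leq ab/2<0.005$ (so that the outer $\tan$ acts nearly as the identity), one obtains $|\triangle_H|,|\triangle_S|\in[1-O(|l|^2),\,1+O(|l|^2)]\cdot|\triangle_E|$. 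The factor-of-two slack in the Euclidean bounds easily absorbs this.

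For part~(b), the Euclidean case is immediate: $\triangle_E CM_aM_b$ is the image of $\triangle_E CBA$ under a similarity of ratio $1/2$, so its area is exactly $|\triangle_E ABC|/4>|\triangle_E ABC|/5$. In the hyperbolic case, $\triangle_H CM_aM_b$ has sides $a/2$, $b/2$, and $c':=|M_aM_b|_H$, with $c'$ determined by the hyperbolic law of cosines at $C$,
\[
\cosh c' = \cosh(a/2)\cosh(b/2)-\sinh(a/2)\sinh(b/2)\cos C_H.
\]
A Taylor expansion of both this identity and the original hyperbolic law of cosines $\cosh c = \cosh a\cosh b - \sinh a\sinh b\cos C_H$, together with the Euclidean identity $c^2 = a^2+b^2-2ab\cos C_E$ and $\cos C_H-\cos C_E = O(|l|^2)$, gives $c'=c/2+O(|l|^3)$. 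Applying the same L'Huilier/Heron comparison from part~(a), one obtains $|\triangle_H CM_aM_b|=(|\triangle_E ABC|/4)(1+O(|l|^2))$ and $|\triangle_H ABC|=|\triangle_E ABC|(1+O(|l|^2))$, so the ratio is $1/4+O(|l|^2)$, which exceeds $1/5$ for $|l|<0.1$ once one checks the implicit constant. The spherical case is strictly analogous.

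The main technical difficulty I expect is ensuring that the $O(|l|^2)$-correction in the area ratio is \emph{uniform in the triangle shape}, particularly in the near-degenerate regime (small $\epsilon$, so $s-a$ small), where Heron's formula is highly sensitive to changes in edge lengths. A naive sensitivity argument $\partial(\mathrm{area})/\partial c\sim 1/(s-a)$ would blow up as $\epsilon\to 0$, but the Taylor expansion of the hyperbolic law of cosines shows the correction $c'-c/2$ has a precise $O(|l|^3)$ scaling whose numerator cancels exactly the denominator arising from $s-a$, yielding a genuinely uniform $O(|l|^2)$ bound. Checking this cancellation rigorously, and thereby nailing down the numerical constants that confirm $|l|<0.1$ suffices, is the main calculational effort.
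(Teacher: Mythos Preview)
Your approach to part~(a) essentially matches the paper's: both reduce the hyperbolic and spherical bounds to the Euclidean case via the L'Huilier/Heron comparison, and your Euclidean argument (monotonicity of $\sin B\sin C/\sin(B+C)$) is a valid alternative to the paper's direct law-of-sines estimate.

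For part~(b) your route differs genuinely from the paper's. You compute the third side $c'=|M_aM_b|$ of the midpoint triangle via the law of cosines and then push everything through Heron/L'Huilier; this forces you to control the shape-dependent sensitivity of Heron's formula, which (as you correctly flag) demands a delicate cancellation in the near-degenerate regime. The paper avoids this entirely by using a side--angle--side area formula,
\[
\cot\frac{|\triangle_H ABC|}{2}=\frac{\coth\frac{a}{2}\coth\frac{b}{2}-\cos C_H}{\sin C_H}
\]
(and the spherical analogue with $\cot(a/2)\cot(b/2)$). Since $\triangle_H CM_aM_b$ shares the angle $C_H$ with the big triangle and has the two adjacent sides halved, the ratio $\tan(|\triangle_H CM_aM_b|/2)\big/\tan(|\triangle_H ABC|/2)$ becomes a purely algebraic expression in $\coth(a/2),\coth(b/2),\coth(a/4),\coth(b/4),\cos C_H$. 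The $\sin C_H$ factors cancel, and what remains is bounded below by $1/5$ via $\coth x\approx 1/x$ for small~$x$; no estimate on $c'$ and no angle assumption are needed.

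Your cancellation claim is actually correct --- one has the exact identity $\cosh^2(c/2)-\cosh^2 c' = \sinh^2(a/2)\sinh^2(b/2)\sin^2 C_H$, so $(c/2)^2-c'^2$ carries a factor of $\sin^2 C_H$ that offsets the Heron degeneracy --- and your approach can be completed with additional bookkeeping. The paper's SAS formula simply buys a two-line proof in place of that calculation.
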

\begin{remark}
\label{remark}
By the well-known Toponogov comparison theorem (see Lemma \ref{Toponogov}),
the assumption in part (a) of Lemma \ref{triangle area} can be replaced by that all the inner angles in $\triangle_HABC$ are at least $\epsilon>0$.
\end{remark}

\begin{figure}
	\label{midpoint}
	\centering
	\includegraphics[width=0.5\textwidth]{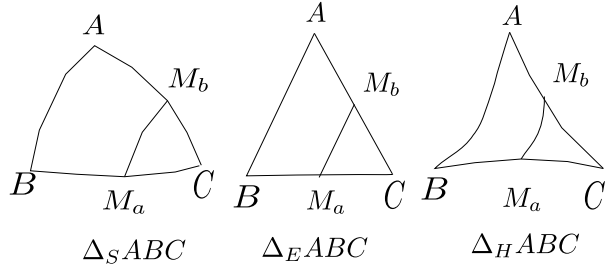}
	\caption{Triangles in Lemma \ref{triangle area}}
\end{figure}
The following Lemma \ref{cubic estimate} was first proved by Gu-Luo-Wu (see Proposition 5.2 in  \cite{gu2019convergence}). It indicates that our discrete conformal formula $l'_{ij}=e^{(u_i+u_j)/2}l_{ij}$ accurately approximates the local distance after a continuous conformal change.
\begin{lemma}
	\label{cubic estimate}
Suppose $(M,g)$ is a  closed Riemannian surface, and $ u\in C^{\infty}(M)$ is a conformal factor. Then there exists $C=C(M,g,u)>0$ such that for any $x,y\in M$,
$$
|d_{e^{2 u}g}(x,y)-e^{\frac{1}{2}(u(x)+ u(y))}d_g(x,y)|\leq Cd_g(x,y)^3.
$$
\end{lemma}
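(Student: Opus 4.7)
The plan is to reduce to a local Taylor expansion along a $g$-geodesic, and then obtain the two-sided estimate by exploiting the symmetry between $g$ and $\tilde g := e^{2u}g$. By compactness of $M$, both $d_g$ and $d_{\tilde g}$ are bounded on $M \times M$, so the inequality is trivial whenever $d_g(x,y) \geq r_0$ for any fixed $r_0 > 0$; thus I restrict to $d_g(x,y) < r_0$ with $r_0$ smaller than the injectivity radius of $(M,g)$.

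For the upper bound $d_{\tilde g}(x,y) \leq e^{(u(x)+u(y))/2} d_g(x,y) + C_1 d_g(x,y)^3$, I will use the unit-speed $g$-geodesic $\gamma : [0,L] \to M$ from $x$ to $y$ (with $L = d_g(x,y)$) as a test curve. Setting $f(t) := u(\gamma(t))$, the geodesic equation $\nabla_{\gamma'}\gamma' = 0$ gives $|f^{(k)}(t)| \leq \|u\|_{C^k(M)}$ for $k \leq 3$. Taylor expanding $f$ symmetrically about the midpoint $t = L/2$ and integrating yields
$$\int_0^L e^{f(t)}\,dt - L\,e^{(f(0)+f(L))/2} = \frac{L^3}{24}\bigl(f'(L/2)^2 - 2 f''(L/2)\bigr) e^{f(L/2)} + O(L^4),$$
the essential point being that the $L^0$- and $L^2$-order terms match on both sides: the $L$-coefficients agree at leading order, and the odd-order contributions to $(f(0)+f(L))/2$ cancel by midpoint symmetry. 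Combined with $d_{\tilde g}(x,y) \leq L_{\tilde g}(\gamma) = \int_0^L e^{f(t)}dt$, this gives the upper bound with constant controlled by $\|u\|_{C^3(M)}$ and the geometry of $(M,g)$.

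For the lower bound, I will apply the upper bound just proved to the Riemannian surface $(M, \tilde g)$ with conformal factor $-u$; since $e^{-2u}\tilde g = g$, this produces $d_g(x,y) \leq e^{-(u(x)+u(y))/2} d_{\tilde g}(x,y) + C_2 d_{\tilde g}(x,y)^3$. Using the trivial comparison $d_{\tilde g}(x,y) \leq e^{\sup_M u}\,d_g(x,y)$ to convert $d_{\tilde g}^3$ into $d_g^3$ and rearranging yields $d_{\tilde g}(x,y) \geq e^{(u(x)+u(y))/2} d_g(x,y) - C_3 d_g(x,y)^3$, which together with the upper bound finishes the lemma.

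The main obstacle is circumventing the fact that the true $\tilde g$-minimizing geodesic $\tilde\gamma$ from $x$ to $y$ is not a $g$-geodesic, so a direct lower bound on $d_{\tilde g}(x,y) = L_{\tilde g}(\tilde\gamma)$ would require analyzing $\tilde\gamma$ as a curve in $(M,g)$, in particular estimating the second derivative of $u \circ \tilde\gamma$ despite $\tilde\gamma$ having nonzero geodesic curvature in $g$. The symmetry trick sidesteps this entirely by converting the lower bound for $(M,g,u)$ into an upper bound for $(M,\tilde g,-u)$, which is handled uniformly by the same test-curve argument; the residual work is then just bookkeeping the constants through the reciprocal relation and the trivial sup-bound on $u$.
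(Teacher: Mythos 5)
Your proposal is correct and follows essentially the same route as the paper: the paper also proves only the one-sided bound $d_{e^{2u}g}(x,y)\leq e^{(u(x)+u(y))/2}d_g(x,y)+Cd_g(x,y)^3$ by testing with the $g$-geodesic and using the midpoint-rule Taylor estimate (controlling $u\circ\gamma$ and its derivatives via the geodesic equation and compactness), and then obtains the reverse inequality by applying this to $(M,e^{2u}g)$ with conformal factor $-u$ and converting $d_{e^{2u}g}^3$ to $d_g^3$ with the trivial sup-bound.
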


The following Lemma \ref{isoperimetric} shows that a regular dense geodesic triangulation $(T,l)$ of a surface is $C$-isoperimetric and "stable" under a conformal change.
\begin{lemma}
	\label{isoperimetric}
Suppose $(M,g)$ is a closed Riemannian surface,
and $T$ is a geodesic triangulation of $(M,g)$, and $l\in\mathbb R_{>0}^{E(T)}$ denotes the geodesic lengths of the edges, and
assume $(T,l)_E$ or $(T,l)_H$ is $\epsilon$-regular.
\begin{enumerate}[label=(\alph*)]

\item Given a conformal factor $ u\in C^{\infty}(M)$, there exists a constant $\delta=\delta(M,g,u,\epsilon)>0$ such that if $|l|<\delta$ then
there exists a geodesic triangulation $T'$ in $(M,e^{2u}g)$ such that $V(T')=V(T)$, and $T'$ is homotopic to $T$ relative to $V(T)$. Further $(T,\bar l)_E$ and $(T,\bar l)_H$ are $\frac{1}{2}\epsilon$-regular where
$\bar l\in\mathbb R^{E(T)}\cong\mathbb R^{E(T')}$ denotes the geodesic lengths of the edges of $T'$ in $(M,e^{2u}g)$.
\item There exists a constant $\delta=\delta(M,g,\epsilon)$ such that if $|l|<\delta$,
$(T,l)$ is $C$-isoperimetric for some constant $C=C(M,g,\epsilon)$.
\end{enumerate}
\end{lemma}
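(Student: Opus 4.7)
I would replace each $g$-geodesic edge of $T$ by the unique nearby short $e^{2u}g$-geodesic joining the same endpoints. By compactness of $M$, the injectivity radius of $(M, e^{2u}g)$ is bounded below, and the two metrics are mutually bi-Lipschitz with constants depending on $\|u\|_{C^0}$. For $|l|$ smaller than a threshold depending only on these data, each edge of $T$ lies in a convex geodesic ball of $(M, e^{2u}g)$, so a unique short $e^{2u}g$-geodesic exists and stays close to the original edge. The resulting arcs assemble into a legitimate geodesic triangulation $T'$ homotopic to $T$ rel $V(T)$ by a standard transversality argument: they lie in thin tubular neighborhoods of the original edges, so for $|l|$ small no spurious crossings arise. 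To verify $\tfrac{\epsilon}{2}$-regularity, apply Lemma \ref{cubic estimate} to obtain $\bar l_{ij} = e^{(u_i+u_j)/2}\, l_{ij}(1 + O(l_{ij}^2))$. On each triangle $\triangle ijk$, factor out the common scale $e^{(u_i+u_j+u_k)/3}$; the three side ratios then differ from this common factor by $1 + O(\|\nabla u\|_{C^0}|l|) = 1 + O(|l|)$. Lemma \ref{estimate for Euclidean triangle} (or its hyperbolic counterpart) with $\delta = O(|l|) < \epsilon^2/48$ then yields angle perturbations of order $|l|/\epsilon$, which drop below $\epsilon/2$ for $|l|$ small; the Delaunay condition (b) is handled in the same way.

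\textbf{Part (b).} I plan to transfer the continuous isoperimetric inequality on $(M,g)$ to the graph. Combining the local Euclidean-type isoperimetric inequality (for regions of small area) with the Cheeger inequality on the compact surface (for regions of area comparable to $|M|/2$) yields a constant $C_0 = C_0(M, g)$ such that $\min(|U|_g, |M|_g - |U|_g) \leq C_0\, s_g(\partial U)^2$ for every piecewise smooth $U \subset M$. Given $V_0 \subset V$, set
\[ U(V_0) := \bigcup_{\triangle \in F(T),\ |V(\triangle) \cap V_0| \geq 2} \triangle, \]
so that $U(V_0)$ and $U(V \setminus V_0)$ partition $M$ up to boundary. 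An edge $ij$ of $T$ lies on $\partial U(V_0)$ if and only if exactly one of its two adjacent triangles has majority in $V_0$; a short case analysis shows this forces $ij \in \partial V_0$ with its two opposite vertices $k,k'$ split between $V_0$ and its complement, hence $s_g(\partial U(V_0)) \leq |\partial V_0|_l$. Each triangle in $U(V_0)$ has at least one edge in $E_{int}(V_0) := \{ij \in E : i,j \in V_0\}$, and by Lemma \ref{triangle area}(a), Remark \ref{remark}, and a Toponogov comparison, $l_e^2 \leq C|\triangle|_g/\epsilon$ for that edge with $C = C(M,g)$. A double counting (assign each edge in $E_{int}(V_0)$ to one adjacent triangle in $U(V_0)$; each triangle receives at most three assignments) yields $|V_0|_l \leq C_1 |U(V_0)|_g$, and symmetrically $|V \setminus V_0|_l \leq C_1 |U(V \setminus V_0)|_g$, with $C_1 = C_1(M,g,\epsilon)$. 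The continuous isoperimetric inequality applied to $U(V_0)$ then gives
\[ \min(|V_0|_l, |V \setminus V_0|_l) \leq C_1 \min(|U(V_0)|_g, |M|_g - |U(V_0)|_g) \leq C_1 C_0\, |\partial V_0|_l^2. \]
Finally, $|V|_l = |V_0|_l + |V \setminus V_0|_l + \sum_{ij \in \partial V_0} l_{ij}^2$ combined with $\sum_{ij \in \partial V_0} l_{ij}^2 \leq |\partial V_0|_l^2$ and the elementary inequality $\min(a, b+c) \leq \min(a, b) + c$ upgrades this to $\min(|V_0|_l, |V|_l - |V_0|_l) \leq (C_1 C_0 + 1)\, |\partial V_0|_l^2$, which is the desired $C$-isoperimetry.

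\textbf{Main obstacle.} The most delicate step is constructing $T'$ rigorously in part (a): the intuitive picture of perturbing each geodesic is clear, but turning it into a genuine embedded simplicial complex requires quantitative injectivity-radius and transversality estimates for $(M, e^{2u}g)$. In part (b), the most subtle points are the Toponogov-based comparison $l_e^2 \sim_\epsilon |\triangle|_g$, uniform over all sufficiently small triangles in the mesh, and the bookkeeping separating constants depending on $(M,g)$ from those depending on $\epsilon$; once these are in place, the double counting and the elementary manipulation at the end are routine.
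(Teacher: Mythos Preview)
Your outline for both parts follows a different route from the paper, and while each is plausibly completable, there are points where more is needed than you have written.

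\textbf{Part (a).} The paper does not use the cubic estimate (Lemma~\ref{cubic estimate}) here. Instead it proves two auxiliary facts: a Toponogov-type comparison (Lemma~\ref{5.3}) showing that the Riemannian, Euclidean, and hyperbolic angles of a small triangle with the same edge lengths differ by $O(|l|^2)$, and a geodesic-angle lemma (Lemma~\ref{lemma 5.2}) showing that the $g$-geodesic and $e^{2u}g$-geodesic joining two nearby points meet at an arbitrarily small angle. Chaining these gives $|\bar\theta^i_{jk}(P)-\theta^i_{jk}(P)|\leq\epsilon/12$ for $P\in\{E,H,M\}$ directly, without comparing edge lengths at all. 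Your alternative via Lemma~\ref{cubic estimate} plus Lemma~\ref{estimate for Euclidean triangle} does work for $(T,\bar l)_E$ after your rescaling trick, but note that hyperbolic triangles are not scale-invariant, so for $(T,\bar l)_H$ you would still need to route through a Euclidean--hyperbolic comparison (essentially Lemma~\ref{5.3}). For the embedding, the paper makes your ``transversality argument'' precise by building a continuous $f:M\to M$ taking each $g$-triangle homeomorphically to the corresponding $e^{2u}g$-triangle, checking that the cyclic order of edges at each vertex is preserved so that $f$ is a local homeomorphism, and then using that $f$ is homotopic to the identity to conclude it is a global homeomorphism.

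\textbf{Part (b).} The paper's strategy is two-step: first prove the statement when $(M,g)$ has constant curvature $0$ or $\pm1$, and then reduce the general case to this via the uniformization theorem together with part~(a) (the $C$-isoperimetric constant changes only by a bi-Lipschitz factor under $l\mapsto\bar l$). In the constant-curvature step the construction also differs from yours: rather than a majority-vote union of whole triangles, the paper cuts each triangle having two edges in $\partial V_0$ along the geodesic joining the midpoints of those edges; the resulting curve $\Gamma$ satisfies $s(\Gamma)\leq|\partial V_0|_l$ by the triangle inequality, and the region it bounds satisfies $|P^0|\geq\tfrac{\epsilon}{60}|V_0|_l$ via the midpoint-triangle area bound Lemma~\ref{triangle area}(b). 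Your majority-vote region $U(V_0)$ is combinatorially cleaner and your case analysis for $\partial U(V_0)\subset\partial V_0$ is correct, but your key inequality $l_e^2\leq C|\triangle|_g/\epsilon$ is not supplied by Lemma~\ref{triangle area}, which treats only constant-curvature triangles, not geodesic triangles in a general $(M,g)$. You can certainly establish it (e.g.\ via normal coordinates one has $|\triangle|_g=(1+O(|l|^2))|\triangle|_E$), but this is exactly the extra work the paper sidesteps by reducing to constant curvature first.
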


\subsection{Proof of Theorem \ref{torus main thm}}
\label{the case of torus}
For fix two constants $\epsilon_1,\epsilon_2>0$, we assume that $(T,l)_E$ is $(\epsilon_1,\epsilon_2)$-regular and $|l|\leq \delta$ where $\delta=\delta(M,g,\epsilon_1,\epsilon_2)<1$ is a sufficiently small constant to be determined. Simply note that $(T,l)_E$ is $\epsilon$-regular where $\epsilon=\min\{\epsilon_1,\epsilon_2\}$.
By Lemma \ref{isoperimetric},
if $\delta$ is sufficiently small then there exists a geodesic triangulation $T'$ of $(M,e^{2\bar u}g)$ homotopic to $T$ relative to $V(T)=V(T')$.
Let $\bar l\in\mathbb R^{E(T)}\cong \mathbb R^{E(T')}$ denote the geodesic lengths of the edges of $T'$ in $(M,e^{2\bar u}g)$, and then $(T,\bar l)_E$ is isometric to $(M,e^{2\bar u}g)$ and globally flat.

For simplicity, we will frequently use the notion $a=O(b)$ to denote that if $\delta=\delta(M,g,\epsilon_1,\epsilon_2)$ is sufficiently small, then $|a|\leq C\cdot b$ for some constant $C=C(M,g,\epsilon_1,\epsilon_2)$. For example, $l_{ij}=O(l_{jk})$ for any $\triangle ijk\in F(T)$, and $(\bar u*l)_{ij}=O(l_{ij})$, and $\bar l_{ij}=O(l_{ij})$.
The remaining of the proof is divided into three steps.
\begin{enumerate}[]
\item Firstly we show that $(T,\bar u*l)_E$ is very close to the globally flat triangle mesh $(T,\bar l)_E$, in the sense that
$$
(\bar u*l)_{ij}-\bar l_{ij}=O(l_{ij}^3)
$$
and
$$
K(\bar u)=div(x)
$$
for some flow $x\in\mathbb R^E_A$ satisfying $x_{ij}=O(l_{ij}^2).$

\item Secondly, we construct a flow $u(t):[0,1]\rightarrow\mathbb R^V$, starting at $u(0)=\bar u$, to linearly eliminate the curvature $K(\bar u)$, i.e., to let
    $$
    K(u(t))=(1-t)K(\bar u).
    $$ Further we show that $|u'(t)|=O(|l|)$, and then $(T,u(1)*l)_E$ is globally flat and $u(1)-\bar u=O(|l|)$.

\item Lastly we show that $Area((T,u(1)*l)_E)-1=O(|l|)$, so the area normalization condition can be satisfied by slightly scaling $(T,u(1)*l)_E$.
\end{enumerate}

The uniqueness of the discrete uniformization conformal factor is proved by Bobenko-Pinkall-Springborn (see Theorem 3.1.4 in \cite{bobenko2015discrete}), so we omit its proof here.

\subsubsection{Step 1}
\label{torus 1}
By lemma \ref{isoperimetric}, $(T,\bar{l})_{E}$ is $\frac{1}{2}\epsilon$-regular if $\delta$ is sufficiently small. For simplicity we denote $\bar u|_{V(T)}$ as $\bar u$.
By lemma \ref{cubic estimate},
\begin{equation*}
\label{11}
\bar l_{ij}-(\bar u*l)_{ij}=O (l_{ij}^3),
\end{equation*}
and then by Lemma \ref{estimate for Euclidean triangle}
$$
\alpha^i_{jk}:=\bar\theta^i_{jk}-\theta^i_{jk}(\bar u)=O(l_{ij}^2)
$$
where $\bar\theta^i_{jk}$ denotes the inner angle in $(T,\bar l)_E$.
So $(T,\bar u* l)_E$ is $\frac{1}{3}\epsilon$-regular if $\delta$ is sufficiently small.
Let $x\in\mathbb R^E_A$ be such that
$$
x_{ij}=\frac{\alpha^i_{jk}-\alpha^j_{ik}}{3}+\frac{\alpha^i_{jk'}-\alpha^j_{ik'}}{3}
$$
where $\triangle ijk$ and $\triangle ijk'$ are adjacent triangles. Then
$\alpha^i_{jk}+\alpha^j_{ik}+\alpha^k_{ij}=0$ and
$$
div(x)_i=\sum_{j:j\sim i}x_{ij}=
\sum_{jk:\triangle ijk\in F(T)}
\left(\frac{\alpha^i_{jk}-\alpha^j_{ik}}{3}+\frac{\alpha^i_{jk}-\alpha^k_{ij}}{3}\right)
=\sum_{jk:\triangle ijk\in F(T)}\alpha^i_{ij}=K_i(\bar u),
$$
and
\begin{equation}
\label{13}
x_{ij}=O(l_{ij}^2).
\end{equation}
\subsubsection{Step 2}
Let
$$
\tilde\Omega=\{u\in\bold 1^\perp:u*l\text{ satisfies the triangle inequalities and $(T,u*l)_E$ is $\frac{\epsilon}{5}$-regular}\}
$$
and
$$
\Omega=\{u\in\tilde\Omega:|u-\bar u|\leq1\text{, $(T,u*l)_E$ is $\frac{\epsilon}{4}$-regular}\}.
$$
Since $(T,\bar u*l)_E$ is $\frac{1}{3}\epsilon$-regular,
$\bar u$ is in the interior of $\Omega$.
Now consider the following ODE on $int(\tilde\Omega)$,
\begin{equation}
\label{ode for torus case}
	\left\{
	\begin{array}{lcl}
	u'(t)=\Delta_{\eta(u)}^{-1} K(\bar u)=\Delta_{\eta(u)}^{-1}\circ div(x)\\
	u(0)=\bar u
	\end{array}
	\right.,
\end{equation}
where
\begin{equation}
\label{15}
	\eta_{ij}(u)=\frac{\cot\theta^k_{ij}(u)+\cot\theta^{k'}_{ij}(u)}{2}
	=\frac{\sin(\theta^k_{ij}(u)+\theta^{k'}_{ij}(u))}{2\sin\theta^k_{ij}(u)\sin\theta^{k'}_{ij}(u)}
	\geq\frac{1}{2}\sin(\theta^k_{ij}(u)+\theta^{k'}_{ij}(u))
	\geq\frac{1}{2}\sin\frac{\epsilon}{5},
\end{equation}

where $\triangle ijk,\triangle ijk'$ are adjacent triangles.
By lemma \ref{smoothness of operator in hyperbolic case}, the right-hand side of (\ref{ode for torus case}) is a smooth function of $u$, so the ODE (\ref{ode for torus case}) has a unique solution $u(t)$ and
$$
K(u(t))=(1-t)K(\bar u)
$$
by Proposition \ref{Discrete curvature equation for torus case}.
Assume the maximum existing open interval of $u(t)$ in $\Omega$ is $[0,T_0)$ where $T_0\in(0,\infty]$.
By Lemma \ref{isoperimetric}, $(T,l)$ is $C$-isoperimetric for some constant $C=C(M,g,\epsilon_1,\epsilon_2)$. Then for any $u\in\Omega$, $(T,u*l)$ is $(e^{4(|\bar u|+1)}C)$-isoperimetric by the fact that $|u|\leq|\bar u|+1$.
Then by Lemma \ref{estimate for divergence operator} and equations (\ref{13})(\ref{ode for torus case})(\ref{15}), for any $t\in[0,T_0)$
\begin{equation}
\label{u' for torus}
|u'(t)| =O(|l| \cdot|V|_l^{1/2}).
\end{equation}
By Lemma \ref{triangle area}
and
the fact that $(T,\bar l)_E$ is $\frac{1}{2}\epsilon$-regular,
\begin{align}
\label{V for torus}
&|V|_{l}=\sum_{ij\in E}l_{ij}^2=O(\sum_{ij\in E}\bar l_{ij}^{~2})
=O(\sum_{ijk\in F}(\bar l_{ij}^{~2}+\bar l_{jk}^{~2}+\bar l_{ik}^{~2}))\\
=&O(
\sum_{ijk\in F}|(\triangle ijk,\bar l)_E|)=O(|(T,\bar l)_E|)=O(1).\notag
\end{align}
Here recall that $|(\triangle ijk,\bar l)_E|$ denotes the area of the Euclidean triangle, and $|(T,\bar l)_E|$ denotes the area of the piecewise flat surface.

Combining the estimates (\ref{u' for torus}) and (\ref{V for torus}), we have that for any $t\in[0,T_0)$
\begin{equation*}
|u'(t)|=O(|l|).
\end{equation*}

If $T_0<1$, by Lemma \ref{estimate for Euclidean triangle}
\begin{equation*}
|u(T_0)-\bar u|=O(|l|)\quad\text{ and }\quad\theta^i_{jk}(u(T_0))-\theta^i_{jk}(\bar u)=O(|l|),
\end{equation*}
and thus $u(T_0)\in int(\Omega)$ if $\delta$ is sufficiently small. But this contradicts with the maximality of $T_0$. So $T_0\geq1$ and
 $(T,u(1))_E$ is globally flat and
$|u(1)-\bar u|=O(|l|).$

\subsubsection{Step 3}
To prove part (a) of the theorem, we only need to scale the mesh $(T,u(1)*l)_E$ to make its area equal to 1. To get the estimate in part (b), it remains to show
$$
\log |(T,u(1)*l)_E|= O(|l|).
$$
Since
\begin{align*}
&|(u(1)*l)_{ij}-\bar l_{ij}|= |(u(1)*l)_{ij}-(\bar u*l)_{ij}|+|(\bar u*l)_{ij}-\bar l_{ij}|\\
\leq&(e^{|u(1)-\bar u|}-1)(\bar u*l)_{ij}+O(l_{ij}^3)=O(|l|\cdot\bar l_{ij}).
\end{align*}
Since $(T,\bar l)_E$ is $\frac{1}{2}\epsilon$-regular, by Lemma \ref{estimate for Euclidean triangle} if
$\delta$ is sufficiently small then for any $\triangle ijk\in F$
$$
\log \frac{|(\triangle ijk, u(1)*l)_E|}{|(\triangle ijk,\bar l)_E|}=O(|l|)
$$
and
$$
\log |(T,u(1)*l)_E|
=
\log \frac{\sum_{\triangle ijk\in F}|(\triangle ijk, u(1)*l)_E|}{\sum_{\triangle ijk\in F}|(\triangle ijk,\bar l)_E|}=O(|l|).
$$

\subsection{Proof of Theorem \ref{high genus main thm}}
\label{the case of high genus}
For fix two constants $\epsilon_1,\epsilon_2>0$, we assume that $(T,l)_H$ is $(\epsilon_1,\epsilon_2)$-regular and $|l|\leq \delta$ where $\delta=\delta(M,g,\epsilon_1,\epsilon_2)<1$ is a sufficiently small constant to be determined. Simply note that $(T,l)_H$ is $\epsilon$-regular where $\epsilon=\min\{\epsilon_1,\epsilon_2\}$.
By Lemma \ref{isoperimetric},
if $\delta$ is sufficiently small there exists a geodesic triangulation $T'$ of $(M,e^{2\bar u}g)$ homotopic to $T$ relative to $V(T)=V(T')$.
Let $\bar l\in\mathbb R^{E(T)}\cong R^{E(T')}$ denote the geodesic lengths of edges of $T'$ in $(M,e^{2\bar u}g)$, and then $(T,\bar l)_H$ is isometric to $(M,e^{2\bar u}g)$ and globally hyperbolic.

For simplicity, we will frequently use the notion $a=O(b)$ to denote that if $\delta=\delta(M,g,\epsilon_1,\epsilon_2)$ is sufficiently small, then $|a|\leq C\cdot b$ for some constant $C=C(M,g,\epsilon_1,\epsilon_2)$. For example, we have that
\begin{enumerate}[label=(\alph*)]
\item $l_{ij}=O(l_{jk})$ for any $\triangle ijk\in F(T)$, and

\item $(\bar u*_hl)_{ij}=O(l_{ij})$, and

\item $\bar l_{ij}=O(l_{ij})$, and

\item $\sinh(l_{ij}/2)=O(l_{ij})$.
\end{enumerate}

The remaining of the proof is divided into two steps.
\begin{enumerate}[]
\item Firstly we show that $(T,\bar u*_hl)_H$ is very close to the globally hyperbolic triangle mesh $(T,\bar l)_H$, in the sense that
$$
(\bar u*_hl)_{ij}-\bar l_{ij}=O(l_{ij}^3)
$$
and
$$
K(\bar u)=div(x)+y
$$
for some $x\in\mathbb R^E_A$ and $y\in\mathbb R^V$ such that $x_{ij}=O(l_{ij}^2)$ and $y_i=O(l_{ij}^4)$.

\item Secondly, we construct a flow $u(t):[0,1]\rightarrow\mathbb R^V$, starting at $u(0)=\bar u$, to linearly eliminate the curvature $K(\bar u)$, i.e., to let
    $$
    K(u(t))=(1-t)K(\bar u).
    $$ Further we show that $|u'(t)|=O(|l|)$, and then $(T,u(1)*_hl)_H$ is globally hyperbolic and $u(1)-\bar u=O(|l|)$.
\end{enumerate}

The uniqueness of the discrete uniformization conformal factor is also proved by Bobenko-Pinkall-Springborn (see Theorem 6.1.6 in \cite{bobenko2015discrete}), so we omit its proof here.
\subsubsection{Part 1}
\label{hyperbolic 1}
By lemma \ref{isoperimetric}, $(T,\bar{l})_{H}$ is $\frac{1}{2}\epsilon$-regular if $\delta$ is sufficiently small. For simplicity we denote $\bar u|_{V(T)}$ as $\bar u$. By lemma \ref{cubic estimate}, we get
\begin{equation*}
\label{11}
\bar l_{ij}-(\bar u*l)_{ij}=O (l_{ij}^3).
\end{equation*}
Using the fact that $|2\sinh(\frac{x}{2})-x|\leq |x|^3$ for $|x|\leq1$, we have
\begin{equation*}
	\bar{l}_{ij}-(\bar{u}*_{h}l)_{ij}=O(l_{ij}^3).
\end{equation*}
Denote $\bar\theta^i_{jk}$ as the inner angle in $(T,\bar l)_H$, and then by Lemma \ref{estimate for hyperbolic triangle} and \ref{triangle area} and Remark \ref{remark}
$$
\alpha^i_{jk}:=\bar\theta^i_{jk}-\theta^i_{jk}(\bar u)=O(l_{ij}^2)
$$
and
$$
\alpha^i_{jk}+\alpha^j_{ik}+\alpha^k_{ij}=|(\triangle ijk,\bar u*_hl)_H|
-|(\triangle ijk,\bar l)_H|=
O(l_{ij}^2)\cdot|(\triangle ijk,\bar l)_H|=O(l_{ij}^4).
$$
So $(T,\bar u*_hl)_H$ is $\frac{1}{3}\epsilon$-regular if $\delta$ is sufficiently small.
Let $x\in\mathbb R^E_A$ and $y\in\mathbb R^V$ be such that
$$
x_{ij}=\frac{\alpha^i_{jk}-\alpha^j_{ik}}{3}+\frac{\alpha^i_{jk'}-\alpha^j_{ik'}}{3}\quad
\text{ and }
\quad
y_i=\frac{1}{3}\sum_{jk:\triangle ijk\in F(T)}(\alpha^i_{jk}+\alpha^j_{ik}+\alpha^k_{ij})
$$
where $\triangle ijk$ and $\triangle ijk'$ are adjacent triangles. Then
\begin{equation*}
div(x)_i+y_i=K_i(\bar u),
\end{equation*}
and
\begin{equation}
\label{21}
x_{ij}=O(l_{ij}^2),
\end{equation}
and
\begin{equation}
\label{22}
y_i=O(l_{ij}^4)
\end{equation}
by the fact that any vertex $i\in V(T')$ has at most $\lfloor2\pi/(\epsilon/2)\rfloor=O(1)$ neighbors.
\subsubsection{Part 2}
Let
$$
\tilde\Omega_{H}=\{u\in\bold 1^\perp:u*_{h}l\text{ satisfies the triangle inequalities and $(T,u*_{h}l)_H$ is $\frac{\epsilon}{5}$-regular}\}
$$
and
$$
\Omega_{H}=\{u\in\tilde\Omega:|u-\bar u|\leq1\text{, $(T,u*_h{l})_H$ is $\frac{\epsilon}{4}$-regular}\}.
$$

Since $(T,\bar u*_hl)_H$ is $\frac{1}{3}\epsilon$-regular, $\bar u$ is in the interior of $\Omega_{H}$.
Now consider the following ODE on $int(\tilde\Omega_{H})$,
\begin{equation}
\label{ode for high genus case}
	\left\{
	\begin{array}{lcl}
	u'(t)=(D(u)-\Delta_{\eta(u)})^{-1} K(\bar u)=(D(u)-\Delta_{\eta(u)})^{-1}(div(x)+y)\\
	u(0)=\bar u
	\end{array}
	\right.,
\end{equation}
where $D(u)$ and $\eta(u)$ are defined as in Proposition \ref{Discrete curvature equation for hyperbolic case}. For any triangle $\triangle ijk$ and $u\in\tilde\Omega_{H}$, by Lemma \ref{triangle area} and Remark \ref{remark} we have
$$
|(\triangle ijk,u*_hl)_H|=O(l_{ij}^2)
$$
and
$$
\frac{1}{2}(\pi+\theta^k_{ij}(u)-\theta^j_{ik}(u)-\theta^i_{jk}(u))=\theta^k_{ij}(u)
+\frac{1}{2}(\pi-\theta^k_{ij}(u)-\theta^j_{ik}(u)-\theta^i_{jk}(u))=\theta^k_{ij}(u)+O(l_{ij}^2).
$$
Now let $w(u)$ be defined as in Proposition \ref{Discrete curvature equation for hyperbolic case}, and then by the formula
$$
\cot A+\cot B=
\frac{\sin(A+B)}{\sin A\sin B}\geq\sin(A+B)\quad\text{for any $A,B\in(0,\pi)$},
$$
we have that if $\delta$ is sufficiently small and $u\in \tilde\Omega_{H}$
$$
w_{ij}(u)
\geq\frac{1}{2}\sin(\theta^k_{ij}+\theta^{k'}_{ij}+O(l_{ij}^2))\geq\frac{1}{2}\sin\frac{\epsilon}{5}+O(l_{ij}^2)
\geq\frac{1}{4}\sin\frac{\epsilon}{5},
$$
and
\begin{equation}
\label{24}
D_{ii}(u)\geq 2w_{ij}\tanh^2\frac{(u*_h l)_{ij}}{2}\geq\epsilon'l_{ij}^2,
\quad
\text{ and }
\quad \eta_{ij}(u)\geq\frac{1}{8}\sin\frac{\epsilon}{5}
\end{equation}
for some constant $\epsilon'=\epsilon'(M,g,\epsilon_1,\epsilon_2)>0$.

The right-hand side of equation (\ref{ode for high genus case}) is a smooth function of $u$, so the ODE (\ref{ode for high genus case}) has a unique solution $u(t)$ and
$$
K(u(t))=(1-t)K(\bar u)
$$
by Proposition \ref{Discrete curvature equation for hyperbolic case}.
Assume the maximum existing open interval of $u(t)\in\Omega_{H}$ is $[0,T_0)$ where $T_0\in(0,\infty]$.
By Lemma \ref{isoperimetric}, when $\delta$ is sufficiently small, $(T,l)$ is $C$-isoperimetric for some constant $C=C(M,g,\epsilon_1,\epsilon_2)$. Then for any $u\in\Omega_{H}$, $(T,u*_hl)$ is $(e^{4(|\bar u|+1)}C)$-isoperimetric by the fact that $|u|\leq|\bar u|+1$ and
$$
\frac{\sinh a}{a}\geq\frac{\sinh b}{b}
$$
for any $a\geq b>0$.
By Lemma \ref{triangle area} and Remark \ref{remark}, it is not difficult to see
$$
|V|_l=O(|V|_{\bar l})=O(|(T,\bar l)_H|)=O(1)\quad\text{ and }\quad
1=O(|(T,\bar l)_H|)=O(|V|_{\bar l})=O(|V|_l).
$$
Then by Lemma \ref{estimate for divergence operator} and equation (\ref{21})(\ref{22})(\ref{24}), for any $t\in[0,T_0)$
\begin{equation}
|u'(t)| =O(|l| \cdot|V|_l^{1/2})=O(|l|).
\end{equation}
By Lemma \ref{estimate for hyperbolic triangle},
if $T_0<1$,
\begin{equation}
|u(T_0)-\bar u|=O(|l|)\quad\text{ and }\quad\theta^i_{jk}(u(T_0))-\theta^i_{jk}(\bar u)=O(|l|),
\end{equation}
and then $u(T_0)\in int(\Omega_{H})$ if $\delta$ is sufficiently small. But this contradicts with the maximality of $T_0$. So $T_0\geq1$ and
 $(T,u(1))_H$ is hyperbolic and
$|u(1)-\bar u|=O(|l|).$

\section{Proof of the Geometric Lemmas}
\label{Proof of geometric lemmas}
We prove Lemma \ref{triangle area} and \ref{cubic estimate}
in Subsection 5.1 and 5.2 respectively, and introduce more lemmas in Subsection 5.3, and then prove part (a) and part (b) of Lemma \ref{isoperimetric} in Subsection 5.4 and 5.5 respectively.
\subsection{Proof of Lemma \ref{triangle area}}
Recall that
\begin{manualtheorem}{Lemma 4.1}
Suppose $\triangle_E ABC$, $\triangle_H ABC$ and $\triangle_S ABC$ are Euclidean and hyperbolic and spherical triangles respectively, with the same edge lengths $a,b,c<0.1$.

(a)
If all the inner angles in $\triangle_E ABC$ are at least $\epsilon>0$, then for any $P\in\{E,H,S\}$,
$$
\frac{\epsilon}{8} a^2\leq |\triangle_P ABC|\leq \frac{1}{\epsilon}a^2.
$$

(b)
Assume $M_a$ is the middle point of $BC$, and $M_b$ is the middle point of $AC$, and $\triangle_P CM_aM_b$ is the geodesic triangle in $\triangle_PABC$ with vertiecs $C,M_a,M_b$, where $P\in\{E,H,S\}$.
Then
$$
|\triangle_P CM_aM_b|\geq \frac{1}{5}|\triangle_P ABC|
$$
for any $P\in\{E,H,S\}$.\end{manualtheorem}
\begin{proof}[Proof of (a)]
We begin with three well known Heron's formulae for Euclidean, hyperbolic and spherical triangles.
$$
|\triangle_E ABC|^2=s(s-a)(s-b)(s-c),
$$
\begin{equation}
\label{27}
\tan^2\frac{|\triangle_H ABC|}{4}=\tanh\frac{s}{2}\tanh\frac{s-a}{2}\tanh\frac{s-b}{2}\tanh\frac{s-c}{2},
\end{equation}
\begin{equation}
\label{28}
\tan^2\frac{|\triangle_S ABC|}{4}=\tan\frac{s}{2}\tan\frac{s-a}{2}\tan\frac{s-b}{2}\tan\frac{s-c}{2},
\end{equation}
where $s=\frac{a+b+c}{2}$.\par
The hyperbolic Heron's formula can be found in Theorem 1.1 in \cite{mednykh2012brahmagupta}, and the spherical one is also called L'Huilier's Theorem and can be found in Section 4.19.2 in \cite{zwillinger2002crc}.

Notice that $|\triangle_EABC|\leq a^2+b^2+c^2\leq0.03$, and for $x\in[0,0.1]$,
\begin{equation*}
\frac{\tanh x}{x}\in(0.99,1)\quad\text{ and }\quad\frac{\tan x}{x}\in(1,1.01).
\end{equation*}
So by the three parallel Heron's formulae and simple approximation estimates, we only need to show the following stronger estimates (\ref{19}) and (\ref{20}) for the Euclidean case.
By the law of sines in the Euclidean triangle $\triangle_E ABC$,
$$
b=\frac{a\sin\angle_E B}{\sin\angle_E A}\leq \frac{a}{\sin\epsilon}\leq \frac{\pi}{2\epsilon}a.
$$
So
\begin{equation}
\label{19}
|\triangle_E ABC|=\frac{1}{2}ab\sin C\leq\frac{1}{2}a\cdot \frac{\pi }{2\epsilon}a=\frac{\pi}{4}\frac{a^2}{\epsilon}.
\end{equation}
By the triangle inequality, we may assume $b\geq a/2$ without loss of generality, and then
\begin{equation}
\label{20}
|\triangle_E ABC|=\frac{1}{2}ab\sin C\geq\frac{1}{2}a\cdot\frac{a}{2}\cdot\sin\epsilon\geq\frac{\epsilon}{2\pi}a^2.
\end{equation}

\end{proof}
\begin{proof}[Proof of (b)]
The Euclidean case is obvious. To prove the hyperbolic and spherical cases, we use the following two formulae \begin{equation}
\label{1621}
\cot\frac{|\triangle_H ABC|}{2}=\frac{\coth\frac{a}{2}\coth\frac{b}{2}-\cos\angle_HC}{\sin\angle_H C},
\end{equation}
\begin{equation}
\label{1622}
\cot\frac{|\triangle_S ABC|}{2}=\frac{\cot\frac{a}{2}\cot\frac{b}{2}+\cos\angle_S C}{\sin\angle_S C},
\end{equation}
where equation (\ref{1621}) was developed in Theorem 6 of \cite{frenkel2018area}. The equation (\ref{1622}) can be obtained by
$$
\cot\frac{|\triangle_S ABC|}{2}=\cot\frac{\angle_SA+\angle_SB+\angle_SC-\pi}{2}=-\tan(\frac{\angle_S A+\angle_S B}{2}+\frac{\angle_S C}{2})
$$
and the well-known Napier's analogies
$$
\tan\frac{\angle_SA+\angle_SB}{2}=\cot\frac{C}{2}\cdot\frac{\cos\frac{a-b}{2}}{\cos\frac{a+b}{2}}.
$$
Here we only prove the hyperbolic case using equation (\ref{21}) and the proof for the spherical case is very similar.
Firstly we apply the formula (\ref{1621}) to $\triangle_HCM_aM_b$ and get
$$
\cot\frac{|\triangle_H CM_aM_b|}{2}=\frac{\coth\frac{a}{4}\coth\frac{b}{4}-\cos\angle_HC}{\sin\angle_H C}.
$$
Then
$$
\frac{\tan\frac{\triangle_HCM_aM_b}{2}}{\tan\frac{\triangle_{H}ABC}{2}}=
\frac{\coth\frac{a}{2}\coth\frac{b}{2}-\cos\angle_H C}{\coth\frac{a}{4}\coth\frac{b}{4}-\cos\angle_HC}
\geq\frac{(2/a)(2/b)-1}{(4/a)(4/b)/0.99^2+1}=\frac{4-ab}{16/0.99^2+ab}\geq\frac{1}{5}.
$$
Since $|\triangle_HC M_aM_b|\leq|\triangle_HABC|$ and $\frac{\tan x}{x}$ is increasing on $(0,\infty)$,
$$
\frac{|\triangle_H CM_aM_b|}{|\triangle_H ABC|}\geq\frac{\tan\frac{\triangle_HCM_aM_b}{2}}{\tan\frac{\triangle_{H}ABC}{2}}\geq\frac{1}{5}.
$$
\end{proof}
\subsection{Proof of Lemma \ref{cubic estimate}}

Recall that

\begin{manualtheorem}{Lemma 4.2}
Suppose $(M,g)$ is a  closed Riemannian surface, and $ u\in C^{\infty}(M)$ is a conformal factor. Then there exists $C=C(M,g,u)>0$ such that for any $x,y\in M$,
$$
|d_{e^{2 u}g}(x,y)-e^{\frac{1}{2}(u(x)+ u(y))}d_g(x,y)|\leq Cd_g(x,y)^3.
$$
\end{manualtheorem}

It suffices to prove one direction of the inequality, i.e., the following Lemma \ref{proof of cubic estimate in one direction}. Once we have Lemma \ref{proof of cubic estimate in one direction}, let $C_1=C(M,g,u)$ and $C_2=C(M,e^{2u}g,-u)$ such that for any $x,y\in M$,
$$
d_{e^{2 u}g}(x,y)\leq e^{\frac{1}{2}( u(x)+ u(y))}d_g(x,y)+ C_1d_g(x,y)^3.
$$
$$
d_{g}(x,y)\leq e^{\frac{1}{2}(-u(x)-u(y))}d_{e^{2u}g}(x,y)+ C_2d_{e^{2u}g}(x,y)^3.
$$
Then
\begin{align*}
&|d_{e^{2 u}g}(x,y)-e^{\frac{1}{2}( u(x)+ u(y))}d_g(x,y)|\\
\leq &C_1d_g(x,y)^3+C_2e^{\|u\|_\infty}d_{e^{2u}g}(x,y)^3\\
\leq&(C_1+C_2e^{4\|u\|_\infty})d_g(x,y)^3.
\end{align*}
\begin{lemma}
	\label{proof of cubic estimate in one direction}
Suppose $(M,g)$ is a  closed Riemannian surface, and $ u\in C^{\infty}(M)$ is a conformal factor. Then there exists $C=C(M,g, u)>0$ such that for any $x,y\in M$,
$$
d_{e^{2 u}g}(x,y)\leq e^{\frac{1}{2}( u(x)+ u(y))}d_g(x,y)+ Cd_g(x,y)^3.
$$

\end{lemma}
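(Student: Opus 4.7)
The plan is to take a minimizing $g$-geodesic $\gamma:[0,L]\to M$ from $x$ to $y$, parametrized by arc length so $L = d_g(x,y)$. Since $(M,g)$ is compact (hence complete), such a minimizing geodesic always exists and is $C^\infty$. Then $d_{e^{2u}g}(x,y)$ is bounded above by the $e^{2u}g$-length of $\gamma$, giving
$$d_{e^{2u}g}(x,y) \leq \int_0^L e^{u(\gamma(t))}\,dt.$$
The whole problem reduces to showing that this integral exceeds $e^{(u(x)+u(y))/2}L$ by at most $CL^3$ for a constant $C = C(M,g,u)$.

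Set $f(t) := u(\gamma(t))$ on $[0,L]$. Because $u \in C^\infty(M)$, $M$ is compact, and $\gamma$ is unit $g$-speed, the derivatives $f', f'', f'''$ are uniformly bounded in terms of $\|u\|_{C^3(M)}$ alone, independently of the choice of $x,y$ and of $\gamma$. I would Taylor-expand both sides around the midpoint $t = L/2$. Writing $s = t - L/2$ and $m = L/2$, Taylor's theorem with remainder gives
$$e^{f(t)} = e^{f(m)}\Bigl(1 + f'(m) s + \tfrac{1}{2}\bigl(f''(m) + f'(m)^2\bigr) s^2\Bigr) + O(s^3),$$
with a uniform constant in the $O$. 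Integrating $s$ from $-L/2$ to $L/2$, the linear term vanishes by symmetry, and one gets
$$\int_0^L e^{f(t)}\,dt = e^{f(m)} L + O(L^3).$$
A parallel symmetric Taylor expansion of $f$ at $t=0$ and $t=L$ about $m = L/2$ gives $\tfrac{1}{2}(f(0) + f(L)) = f(m) + O(L^2)$, so
$$e^{(f(0)+f(L))/2}\,L = e^{f(m)}\,L + O(L^3).$$
Subtracting, both expressions agree to within $O(L^3)$, giving the desired estimate on the regime where $L$ is small enough for the Taylor remainders to be controlled.

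For $L$ bounded away from zero, the inequality is trivial: on a compact manifold $d_g$ and $d_{e^{2u}g}$ are both bounded, $e^{(u(x)+u(y))/2}$ is bounded above and below by positive constants, so $|d_{e^{2u}g}(x,y) - e^{(u(x)+u(y))/2}d_g(x,y)|$ is a bounded quantity, and $C L^3 \geq C L_0^3$ absorbs it once $C$ is chosen large. Gluing the two regimes yields a single constant $C = C(M,g,u)$.

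The main obstacle is really just uniformity: one needs that the Taylor remainder constant is independent of the particular minimizing geodesic $\gamma$ chosen. This follows because the relevant derivatives $f^{(k)}(t)$ are contractions of $\nabla^k u$ with copies of $\dot\gamma$, and $\|\nabla^k u\|_\infty < \infty$ on the compact manifold while $|\dot\gamma|_g = 1$. The remaining manipulations are elementary algebra of Taylor expansions, with the crucial cancellation being that the first-order $(t-L/2)$ contribution to $\int_0^L e^{f(t)}\,dt$ vanishes by symmetry, which is exactly what matches the symmetric average $(u(x)+u(y))/2$ on the other side and pushes the error down from $O(L^2)$ to $O(L^3)$.
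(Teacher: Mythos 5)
Your proposal is correct and follows essentially the same route as the paper: bound $d_{e^{2u}g}(x,y)$ by the $e^{2u}g$-length of the $g$-minimizing geodesic, then compare $\int_0^L e^{u(\gamma(t))}dt$ and $e^{(u(x)+u(y))/2}L$ to the common midpoint value $e^{u(\gamma(L/2))}L$, with the first-order term cancelling by symmetry (the paper packages this as the midpoint rule with explicit remainders, and establishes uniformity via the geodesic equation in coordinates rather than your covariant-derivative contraction, but these are the same argument).
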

\begin{proof}
We will use the following two estimates. Assume $l>0$ and $f\in C^2[0,l]$, then
$$
\left|\frac{1}{2}[f(0)+f(l)]-f(\frac{l}{2})\right|\leq\frac{l^2}{4}\max_{0\leq t\leq l}|f''(t)|,
$$
and
$$
\left|\int_0^l f(x)dx-l\cdot f(\frac{l}{2})\right|\leq\frac{l^3}{24}\max_{0\leq t\leq l}|f''(t)|.
$$
These two estimates can be proved easily by Taylor's expansions at point $x_0=l/2$, and
the second estimate is the so-called mid-point rule approximation of definite integrals.
Now let $l=d_g(x,y)$ and $\gamma:[0,l]\rightarrow(M,g)$ be a shortest geodesic connecting $x,y$, and it suffices to prove
$$
d_{e^{2 u}g}(x,y)-l\cdot e^{\frac{1}{2}(u(x)+u(y))}
\leq Cl^3
$$
for some constant $C=C(M,g,u)$.
Let $h(t)=u(\gamma(t))$ and then
by the two estimates above,
\begin{align*}
&d_{e^{2 u}g}(x,y)-l\cdot e^{\frac{1}{2}(u(x)+u(y))}\\
\leq& s_{e^{2u}g}(\gamma)-l\cdot e^{\frac{1}{2}(u(x)+u(y))}\\
=&\left[
\int_0^l e^{h(t)}dt-l\cdot e^{h(l/2)}
\right]
+
\left[
l\cdot e^{h(l/2)}-l\cdot e^{\frac{1}{2}(h(0)+h(l))}
\right]\\
\leq
&\frac{l^3}{24}\max_{0\leq t\leq l}\left|\left(e^{h(t)}\right)''\right|
+l\cdot e^{\xi}\cdot\left|h(\frac{l}{2})-\frac{1}{2}[h(0)+h(l)]   \right|
\end{align*}
where $\xi$ is between $h(l/2)$ and $\frac{1}{2}[h(0)+h(l)]$, and
$$
\left|h(\frac{l}{2})-\frac{1}{2}[h(0)+h(l)]\right|\leq\frac{l^2}{4}\max_{0\leq t\leq l}|h''(t)|.
$$

By the compactness of $M$, and the fact that $h(t),h'(t),h''(t)$ can be expressed in terms of $\{u,\nabla u,Hess(u),\gamma,\gamma',\gamma''\}$ under local coordinates $(v^1,v^2)$, we only need to show that on a small domain $U$ whose closure is a compact subset of a coordinate domain, $\|u\|_\infty, \|\nabla u\|_\infty$, $\|Hess(u)\|_\infty$, $\|\gamma\|_\infty,\|\gamma'\|_\infty,\|\gamma''\|_\infty$ are all bounded by a constant $C(M,g,u,U)$. It is obvious that $\|u\|_\infty,\|\nabla u\|_\infty,\|Hess(u)\|_\infty$, $\|\gamma\|_\infty$ are bounded by a constant, by the compactness of $\bar U$. $\|\gamma'\|_\infty$ is bounded by a constant since $\langle\gamma'(t),\gamma'(t)\rangle_g=1$, and then $\|\gamma''(t)\|_\infty$ is also bounded by a constant,
by the geodesic equation
$$
\frac{d^2}{dt^2}\gamma^i+\sum_{j,k}\Gamma^i_{jk}(\frac{d}{dt}\gamma^j)(\frac{d}{dt}\gamma^k)=0.
$$
\end{proof}
\subsection{Lemmas for the Proof of Lemma \ref{isoperimetric}}

\begin{lemma}
\label{Toponogov}
Assume $\triangle ABC,\triangle A'B'C',\triangle A''B''C''$ are geodesic Riemannian triangles with the same edge lengths $a,b,c$, and $\triangle A'B'C'$ has constant curvature $-K<0$, and $\triangle A''B''C''$ has constant curvature $K>0$, and the curvature of $\triangle ABC$ is always in $[-K,K]$, and
$$
\max\{a,b,c\}<\frac{\pi}{2\sqrt K}.
$$
Then we have
$$
A'\leq A\leq A''.
$$
\end{lemma}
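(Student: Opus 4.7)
The plan is to derive both inequalities from the Rauch comparison theorems applied at the vertex $A$, together with the monotonicity of the third side length with respect to the included angle in two-dimensional space forms. Once the hinge comparisons are in place, the SSS conclusion follows by inverting a simple law of cosines.

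First I would establish the hinge version of the comparison. Write $a_k(\alpha;b,c)$ for the length of the side opposite $\alpha$ in the geodesic triangle in the simply connected space form of constant curvature $k$ with two adjacent sides of lengths $b,c$ meeting at the angle $\alpha$. By hypothesis $b,c<\pi/(2\sqrt{K})$, so the sides $AB$ and $AC$ are strictly shorter than the first conjugate distance $\pi/\sqrt{K}$ in the round sphere of curvature $K$, and Rauch's comparison theorem applies along their entire lengths in both directions. Under the upper curvature bound $\leq K$, Rauch I (longer Jacobi fields in lower curvature) yields
\[
a \geq a_K(A;b,c),
\]
and under the lower curvature bound $\geq -K$, Rauch II yields
\[
a \leq a_{-K}(A;b,c).
\]

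Next, in the space form of curvature $k\in\{K,-K\}$, with $b,c$ fixed, the spherical and hyperbolic laws of cosines
\[
\cos(\sqrt{K}\,a_K) = \cos(\sqrt{K}\,b)\cos(\sqrt{K}\,c) + \sin(\sqrt{K}\,b)\sin(\sqrt{K}\,c)\cos\alpha,
\]
\[
\cosh(\sqrt{K}\,a_{-K}) = \cosh(\sqrt{K}\,b)\cosh(\sqrt{K}\,c) - \sinh(\sqrt{K}\,b)\sinh(\sqrt{K}\,c)\cos\alpha
\]
show that, within the range $b,c<\pi/(2\sqrt{K})$, the map $\alpha\mapsto a_k(\alpha;b,c)$ is strictly increasing on $(0,\pi)$. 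The triangles $\triangle A'B'C'$ and $\triangle A''B''C''$ are defined by SSS, so $a_{-K}(A';b,c)=a$ and $a_K(A'';b,c)=a$. Combining with the hinge inequalities and inverting the monotonicity gives $a_K(A'';b,c) = a \geq a_K(A;b,c)$, whence $A\leq A''$, and $a_{-K}(A';b,c) = a \leq a_{-K}(A;b,c)$, whence $A'\leq A$.

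The only step requiring real care is the hinge comparison under the upper curvature bound, where one must verify that Jacobi fields along the sides $AB$ and $AC$ in the model sphere remain nondegenerate all the way to $B$ and $C$. This is exactly what the hypothesis $\max\{a,b,c\}<\pi/(2\sqrt{K})$ buys us: it keeps $b,c$ below the first conjugate distance $\pi/\sqrt{K}$ of the sphere of curvature $K$, and it simultaneously keeps the model-sphere hinge function $\alpha\mapsto a_K(\alpha;b,c)$ injective on the full range $(0,\pi)$ (so that the inversion step above is valid). Once Rauch is invoked, the remainder reduces to symbolic manipulation with the laws of cosines.
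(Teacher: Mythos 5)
The paper does not actually prove this lemma: it cites the Toponogov comparison theorem for $A'\leq A$ and the CAT(K) theorem for $A\leq A''$ and explicitly omits the argument. Your proposal instead derives both inequalities from first principles, using the Rauch comparison at the vertex $A$ to get the two hinge inequalities $a_K(A;b,c)\leq a\leq a_{-K}(A;b,c)$ and then converting hinge inequalities into angle inequalities via the strict monotonicity of $\alpha\mapsto a_k(\alpha;b,c)$ in the two model laws of cosines. This is a legitimate and essentially standard route --- it amounts to unpacking, in the small-triangle regime, the proofs of the two theorems the paper black-boxes --- and your inversion step is handled correctly, including the observation that $\sqrt{K}\,a_K\leq\sqrt{K}(b+c)<\pi$ keeps the spherical law of cosines invertible. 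A further point in favor of your route: the lemma only assumes the curvature bound on the triangle itself, so the local Rauch-based hinge comparison is actually the more honest tool here than the global Toponogov theorem.

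The one step you flag as delicate is indeed under-justified as written, and in a way worth naming precisely. For the upper-bound hinge inequality $a\geq a_K(A;b,c)$, the absence of conjugate points along the sides $AB$ and $AC$ is not by itself enough: the Rauch length-expansion estimate for $\exp_A$ must be applied to the geodesic $\sigma$ from $B$ to $C$, which requires lifting $\sigma$ through $\exp_A$ to a curve in the ball of radius $\pi/\sqrt{K}$ of $T_AM$ joining the correct preimages of $B$ and $C$, and then projecting to the model sphere by a length-nonincreasing map. The hypothesis $\max\{a,b,c\}<\pi/(2\sqrt{K})$ is exactly what makes this lift exist and stay below the first conjugate radius (every point of $\sigma$ lies within distance $a+c<\pi/\sqrt{K}$ of $A$), but your sketch attributes the role of the hypothesis only to the side lengths $b,c$ and to the injectivity of the model hinge function. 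This is the genuine content of the CAT(K) statement the paper cites; once that lifting argument is spelled out (or that theorem is invoked for this step), your proof is complete.
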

This is a combination of the well-known Toponogov comparison theorem and the CAT(K) Theorem.
See Theorem 79 on page 339 in \cite{petersen2006riemannian} for the Toponogov comparison theorem,
and Characterization Theorem on page 704 in \cite{alexander1993geometric} or Theorem 1A.6 on page 173 in \cite{bridson1999metric} for the CAT(K) Theorem.
We omit the proof here.
\begin{lemma}
\label{5.3}
Assume $\triangle ABC$ and $\triangle A'B'C'$ are two geodesic Riemannian triangles with the same edge lengths $a,b,c$, and the Gaussian curvature on $\triangle ABC$ and $\triangle A'B'C'$ are both bounded in $(-K,K)$, and $\max\{a,b,c\}<\frac{\pi}{3\sqrt K}$.
Then
$$
|A'-A|\leq  2(a+b+c)^2K.
$$
\end{lemma}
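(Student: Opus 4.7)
The plan is to apply Toponogov's comparison theorem (Lemma~\ref{Toponogov}) to both Riemannian triangles simultaneously. Let $\triangle_+$ and $\triangle_-$ denote the model triangles with edge lengths $a,b,c$ and constant Gaussian curvatures $+K$ and $-K$, respectively, and write $A_\pm,B_\pm,C_\pm$ for their vertex angles. The hypothesis $\max\{a,b,c\}<\pi/(3\sqrt{K})<\pi/(2\sqrt{K})$ is strictly stronger than what Lemma~\ref{Toponogov} requires. Applying that lemma once to $\triangle ABC$ and once to $\triangle A'B'C'$ (both of which have curvature in $[-K,K]$) shows that each of $A,A'$ lies in $[A_-,A_+]$; in particular $|A-A'|\leq A_+-A_-$, and likewise for the other two vertices.

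Next I would exploit the Gauss--Bonnet theorem on the two constant-curvature models:
\[
A_+ + B_+ + C_+ - \pi = K|\triangle_+|, \qquad \pi - (A_- + B_- + C_-) = K|\triangle_-|.
\]
Summing these identities yields $(A_+-A_-)+(B_+-B_-)+(C_+-C_-)=K(|\triangle_+|+|\triangle_-|)$. A second application of Lemma~\ref{Toponogov}, this time with the spherical model $\triangle_+$ itself in the role of the ambient Riemannian triangle and the pair $(\triangle_-,\triangle_+)$ as comparisons, gives $A_-\leq A_+$, and similarly $B_-\leq B_+$, $C_-\leq C_+$. Each summand on the left is therefore nonnegative, so
\[
|A-A'|\;\leq\; A_+-A_-\;\leq\; K\bigl(|\triangle_+|+|\triangle_-|\bigr).
\]

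Finally I would bound each model area by a constant multiple of $(a+b+c)^2$ using the spherical and hyperbolic Heron formulas (equations~(\ref{27}) and~(\ref{28}), rescaled to curvature $\pm K$). With $s=(a+b+c)/2$, the hypothesis $\sqrt{K}\,s<\pi/2$ confines every argument of $\tan$ and $\tanh$ appearing in these formulas to $[0,\pi/4]$, where the elementary bounds $\tanh x\leq x$ and $\tan x\leq (4/\pi)x$ hold. Combining these with the Euclidean Heron identity $|\triangle_E|^2=s(s-a)(s-b)(s-c)$ and the AM--GM estimate $|\triangle_E|\leq s^2/\sqrt{27}$ yields $|\triangle_-|\leq |\triangle_E|$ and $|\triangle_+|\leq (16/\pi^2)|\triangle_E|$, so that $K(|\triangle_+|+|\triangle_-|)\leq 2K(a+b+c)^2$ with ample slack, completing the proof. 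The conceptual heart is the double Toponogov sandwich; the only real work is the area estimate, and since the target constant $2$ is quite generous, the routine Heron computation with naive monotonicity bounds is more than sufficient.
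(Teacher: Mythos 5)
Your proposal is correct and follows essentially the same route as the paper: a Toponogov sandwich reducing to the two constant-curvature models, Gauss--Bonnet to bound the angle gap by $K$ times the sum of the model areas, and the scaled Heron formulas to bound those areas by a multiple of $(a+b+c)^2$. The only differences are cosmetic (the paper phrases the reduction as a ``without loss of generality'' and obtains the cruder but sufficient bound $|\triangle_-|\leq|\triangle_+|\leq(a+b+c)^2$ directly).
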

\begin{proof}
By Lemma \ref{Toponogov}, without loss of generality, we may assume that $\triangle ABC$ has constant curvature $-K$ and $\triangle A'B'C'$ has constant curvature $K$. Then
$$
A'-A>0,\quad B'-B>0,\quad C'-C>0,
$$
and by the Gauss-Bonnet theorem
$$
0<A'-A\leq(A'+B'+C')-(A+B+C)=K\cdot\big(|\triangle A'B'C'|+|\triangle ABC|\big).
$$
By a simple scaling, the Heron's formulae (\ref{27}) and (\ref{28}) can be generalized to the following
\begin{equation*}
\tan^2\frac{|\triangle ABC|\cdot K}{4}=\tanh\frac{s\sqrt K}{2}\tanh\frac{(s-a)\sqrt K}{2}\tanh\frac{(s-b)\sqrt K}{2}\tanh\frac{(s-c)\sqrt K}{2},
\end{equation*}
\begin{equation*}
\tan^2\frac{|\triangle A'B'C'|\cdot K}{4}=\tan\frac{s\sqrt K}{2}\tan\frac{(s-a)\sqrt K}{2}\tan\frac{(s-b)\sqrt K}{2}\tan\frac{(s-c)\sqrt K}{2}\leq s^4K^2,
\end{equation*}
where $s=(a+b+c)/2$.
So
$$
|\triangle ABC|\leq|\triangle A'B'C'|\leq\frac{4}{K}\tan\frac{|\triangle A'B'C'|\cdot K}{4}\leq\frac{4}{K}\cdot s^2K=(a+b+c)^2
$$
and we are done.
\end{proof}

\begin{lemma}
\label{lemma 5.2}
Suppose $(M,g)$ is a  closed Riemannian surface, and $ u\in C^{\infty}(M)$ is a conformal factor, then for any $\epsilon>0$, there exists $\delta=\delta(M,g,u)>0$ such that for any $x,y\in M$ with $d_g(x,y)<\delta$,
\begin{enumerate}[label=(\alph*)]
\item there exists a unique shortest geodesic segment $l$ in $(M,g)$, and $l'$ in $(M,e^{2u}g)$, connecting $x$ and $y$, and
\item the angle between $l$ and $l'$ at $x$, measured in $(M,g)$, is less or equal to $\epsilon$.
\end{enumerate}
\end{lemma}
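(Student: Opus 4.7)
The plan is to reduce both parts to the fact that the Riemannian exponential maps $\exp^g_x$ and $\exp^{\tilde g}_x$, with $\tilde g := e^{2u}g$, are local diffeomorphisms at $0 \in T_xM$ that depend smoothly on the basepoint $x$.

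For part (a), since $M$ is compact and both $g$ and $\tilde g$ are smooth, both metrics have strictly positive injectivity radii, say $r_g$ and $r_{\tilde g}$. As $u$ is bounded on $M$, one has $e^{-\|u\|_\infty} d_g \leq d_{\tilde g} \leq e^{\|u\|_\infty} d_g$. Choosing any $\delta_0 < \min\{r_g,\ e^{-\|u\|_\infty} r_{\tilde g}\}$, whenever $d_g(x,y) < \delta_0$ there is a unique shortest geodesic in each metric connecting $x$ to $y$.

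For part (b), given $x,y$ with $d_g(x,y) < \delta \leq \delta_0$, I would set $v := (\exp^g_x)^{-1}(y)$ and $w := (\exp^{\tilde g}_x)^{-1}(y)$. Then $v$ points along $l$ and $w$ points along $l'$, so the $g$-angle at $x$ between $l$ and $l'$ equals the $g$-angle between $v$ and $w$ in $T_xM$. Consider the transition map $\Phi_x := (\exp^{\tilde g}_x)^{-1} \circ \exp^g_x$, defined in a neighborhood of $0 \in T_xM$. Since both exponential maps have derivative $\mathrm{id}$ at $0$, one has $\Phi_x(0) = 0$ and $D\Phi_x(0) = \mathrm{id}$; by smoothness, $|\Phi_x(v) - v|_g \leq C_x|v|_g^2$ for $|v|_g$ small. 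Hence $w = v + \eta$ with $|\eta|_g \leq C_x|v|_g^2$; since $|v|_g = d_g(x,y)$, elementary linear algebra in $(T_xM, g_x)$ (writing $\sin\theta = \|v\wedge(w-v)\|/(|v|_g|w|_g) \leq |\eta|_g/|w|_g$) shows that the $g$-angle $\theta$ between $v$ and $w$ satisfies $\sin\theta \leq 2C_x d_g(x,y)$ for $d_g(x,y)$ small enough.

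The main technicality is making $C_x$ and the radius of the neighborhood on which $\Phi_x$ is defined uniform over $x \in M$. This follows from the standard fact that solutions to the geodesic ODE depend smoothly on initial conditions including the basepoint; the relevant coefficients in the Taylor expansion of $\Phi_x$ at the origin are therefore continuous in $x$, and compactness of $M$ supplies uniform bounds. Once this uniformity is established, choosing $\delta := \min\{\delta_0,\ \epsilon/(2C)\}$ gives $\theta \leq \epsilon$, completing the proof.
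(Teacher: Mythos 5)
Your argument is correct, but it is genuinely different from the paper's. You work in the tangent space $T_xM$ and compare the two exponential maps: since $\exp^g_x$ and $\exp^{\tilde g}_x$ both have derivative equal to the canonical identification at $0$, the transition map $\Phi_x=(\exp^{\tilde g}_x)^{-1}\circ\exp^g_x$ is the identity to first order, so the two initial velocities $v$ and $w=\Phi_x(v)$ differ by $O(|v|_g^2)$ and subtend a $g$-angle $O(d_g(x,y))$; uniformity in $x$ comes from smooth dependence of geodesics on initial data plus compactness of $M$. The paper instead stays on the surface: it bounds the $g$-geodesic curvature of $e^{2u}g$-geodesics uniformly (as a smooth function on the compact unit circle bundle), lets $l$ and $l'$ first meet again at a point $z$, and applies Gauss--Bonnet to the small disk bounded by the two arcs, getting $\theta_x+\theta_z\le\|K\|_\infty|D|_g+C\,s_g(l')$. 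Both routes yield the same linear-in-$d_g(x,y)$ bound. Your approach is the more standard ODE/chart-comparison argument and avoids the topological step (Jordan--Schoenflies) that the paper needs to know the two arcs bound a disk; the paper's approach avoids any discussion of injectivity radii of exponential maps and their uniformity, trading it for a one-line curvature computation and Gauss--Bonnet. Two small points to tidy in a full write-up: (i) from $\sin\theta\le 2C_xd_g(x,y)$ alone $\theta$ could a priori be near $\pi$, so you should also note $\cos\theta>0$, which is immediate since $|w-v|_g\ll|v|_g$; and (ii) the uniformity step you defer is exactly the statement that $(x,v)\mapsto\Phi_x(v)$ is smooth on a neighborhood of the zero section of $TM$ with $\Phi_x(0)=0$ and $D\Phi_x(0)=\mathrm{id}$ for every $x$, so the quadratic remainder is uniformly controlled on a compact neighborhood of the zero section --- worth stating explicitly, but not a gap.
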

\begin{proof}
Assume $K(x)$ is the Gaussian curvature of $(M,g)$ at $x$, and $\|K\|_\infty=\max_{x\in M}|K(x)|$.
It is easy to find a sufficiently small $\delta$ such that (a) is satisfied, and for any $x\in M$,
$$
|B_g(x,\delta)|_g\cdot\|K\|_\infty<\epsilon/2.
$$
Consider the unit circle bundle
$$
A=\{(x,\vec a)\in TM:x\in M,\|\vec a\|_{e^{2u}g}=1\},
$$
and assume we are in local coordinates $(v_1,v_2)$, and $\Gamma^i_{jk}$ are Christoffel symbols for $g$, and $\tilde\Gamma^i_{jk}$ are Christoffel symbols for $e^{2u}g$.
Then for any geodesic $\gamma(t)=(v_1(t),v_2(t))$ in $(M,e^{2u}g)$ with $\gamma(0)=x$ and $\gamma'(0)= \vec a$, the geodesic curvature $k_g$ of $\gamma$ in $(M,g)$ at point $x$ is
$$
\frac{
-\sqrt{g_{11}g_{22}-g_{12}^2}(-\Gamma^2_{11} \dot v_1^3+\Gamma^1_{22}\dot v_2^3-
(2\Gamma^2_{11}-\Gamma^1_{11})\dot v_1^2\dot v_2+(2\Gamma^1_{12}-\Gamma^2_{22})\dot v_1\dot v_2^2+\ddot v_1\dot v_2-\ddot v_2\dot v_1)}
{\|\vec a\|_g^3}
$$
(see Theorem 17.19 in \cite{gray2006modern} for a proof).
Here $(\dot v_1,\dot v_2)=\vec a$, and $\ddot v_1,\ddot v_2$ are determined by $(\dot v_1,\dot v_2)$ through the geodesic equations
$$
\ddot v_i+\sum_{j,k}\tilde\Gamma^i_{jk}\dot v_j\dot v_k=0.
$$
By this way $k_g$ can be viewed as a smooth function of $(x,\vec a)$ defined on the compact manifold $A$, and thus is bounded by $[-C,C]$ for some constant $C=C(M,g,u)$.

\begin{figure}
	\label{geodesic intersection}
	\centering
	\includegraphics[width=0.6\textwidth]{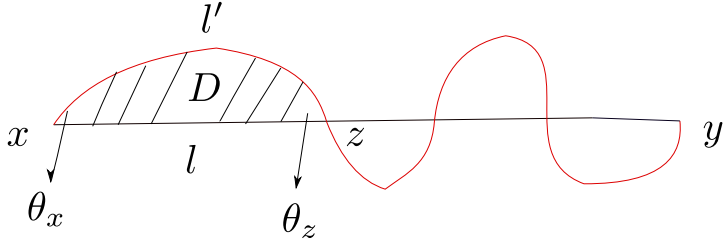}
	\caption{Geodesics in the proof of Lemma \ref{lemma 5.2}}
	\end{figure}
As shown in Figure 2, assume $l$ and $l'$ start at $x$ and first meet at a point $z$. Let $l_0$ (\emph{resp.} $l_0'$) be the part of $l$ (\emph{resp.} $l'$) between $x$ and $z$, and then by the Jordan-Schoenflies theorem $l_0\cup l_0'$ bounds a small closed disk $D$. Let $\theta_x$ (\emph{resp.} $\theta_z$) be the intersecting angle of $l_0$ and $l_0'$ at $x$ (\emph{resp.} at $z$).
Then by the Gauss-Bonnet theorem
$$
\int_DK dA_g+\int_{l_0}k_gds_g+\int_{l'_0}k_g ds_g+(\pi-\theta_x)+(\pi-\theta_z)=2\pi
$$
and
\begin{equation*}
\theta_x\leq\theta_x+\theta_z=\int_D KdA_g+\int_{l_0'}k_g ds_g\leq \|K\|_\infty\cdot |D|_g+C\cdot s_g(l')
\leq\frac{\epsilon}{2}+C\cdot s_g(l')
\end{equation*}
where
$$
s_g(l')\leq e^{\|u\|_\infty}\cdot s_{e^{2u}g}(l')\leq e^{\|u\|_\infty}\cdot s_{e^{2u}g}(l)
\leq e^{2\|u\|_\infty}\cdot s_g(l)\leq e^{2\|u\|_\infty}\cdot\delta.
$$
So $\theta_x\leq\epsilon$ if $\delta\leq\epsilon /(2Ce^{2\|u\|_\infty})$.
\end{proof}
\subsection{Proof of Part (a) of Lemma \ref{isoperimetric}}

Recall that
\begin{manualtheorem}{Part (a) of Lemma \ref{isoperimetric}}
Suppose $(M,g)$ is a closed Riemannian surface,
and $T$ is a geodesic triangulation of $(M,g)$, and $l\in\mathbb R_{>0}^{E(T)}$ denotes the geodesic lengths of the edges,
and $(T,l)_E$ or $(T,l)_H$ is $\epsilon$-regular.
\begin{enumerate}[label=(\alph*)]

\item Given a conformal factor $ u\in C^{\infty}(M)$, there exists a constant $\delta=\delta(M,g,u,\epsilon)>0$ such that if $|l|<\delta$ then
there exists a geodesic triangulation $T'$ in $(M,e^{2u}g)$ such that $V(T')=V(T)$, and $T'$ is homotopic to $T$ relative to $V(T)$. Further $(T,\bar l)_E$ and $(T,\bar l)_H$ are $\frac{1}{2}\epsilon$-regular where
$\bar l\in\mathbb R^{E(T)}\cong\mathbb R^{E(T')}$ denotes the geodesic lengths of the edges of $T'$ in $(M,e^{2u}g)$.
\end{enumerate}
\end{manualtheorem}

\begin{proof}[Proof of Part (a) of Lemma \ref{isoperimetric}]
Denote
\begin{enumerate}
\item $\theta^i_{jk}(M)$ as the inner angle of the geodesic triangle in $F(T)$ in $(M,g)$, and

\item $\theta^i_{jk}(E)$ as the inner angle in $(T,l)_E$, and

\item ${\theta}^i_{jk}(H)$ as the inner angle in $(T,l)_H$, and

\item ${\bar\theta}^i_{jk}(M)$ as the inner angle of the geodesic triangle in $F(T')$ in $(M,e^{2u}g)$, and

\item $\bar\theta^i_{jk}(E)$ as the inner angle in $(T,\bar l)_E$, and

\item ${\bar\theta}^i_{jk}(H)$ as the inner angle in $(T,\bar l)_H$.
\end{enumerate}
By Lemma \ref{5.3} and \ref{lemma 5.2}, if $\delta(M,g,u,\epsilon)$ is sufficiently small, then
\begin{enumerate}[label=(\alph*)]
\item $|\theta^i_{jk}(M)-{\theta}^i_{jk}(E)|\leq \epsilon/12$ and $|\theta^i_{jk}(M)-{\theta}^i_{jk}(H)|\leq \epsilon/12$, and

\item for any $ij\in E(T)$ there exists a unique shortest geodesic $e_{ij}$ in $(M,e^{2u}g)$ connecting $i,j$, and

\item for each $\triangle ijk\in F(T)$, $e_{ij},e_{ik},e_{jk}$ bounds a geodesic triangle $F_{ijk}$ in $(M,e^{2u}g)$, and

\item $|{\bar\theta}^i_{jk}(M)-{\theta}^i_{jk}(M)|\leq \epsilon/12$,
$|{\bar\theta}^i_{jk}(E)-\bar{\theta}^i_{jk}(M)|\leq \epsilon/12$,
$|{\bar\theta}^i_{jk}(H)-\bar{\theta}^i_{jk}(M)|\leq \epsilon/12$, and thus
$(T,\bar l)_E$ and $(T,\bar l)_H$ are $\epsilon/2$-regular, and

\item for any vertex $i\in V(T)=V(T')$, its adjacent edges $\{ij\}_{j\sim i}$ in $T$ are placed in the same order as the adjacent edges $\{e_{ij}\}_{j\sim i}$ in $T'$, i.e., there are no folding triangles in $T'$.
\end{enumerate}
We can define a continuous map $f:M\rightarrow M$ such that
\begin{enumerate}
\item $f(i)=i$ for any $i\in V$, and

\item for any edge ${ij}\in E(T)$, $f$ is a homeomorphism from ${ij}$ to $e_{ij}$, and

\item for any $\triangle ijk\in F(T)$, $f$ is a homeomorphism from $\triangle ijk$ to $F_{ijk}$.
\end{enumerate}
Then by above property (e), $f$ is locally a homeomorphism. Further if $\delta$ is sufficiently small, $f$ is homotopic to the identity. Therefore $f$ is a global homeomorphism and $T'=(V,\{e_{ij}\},\{F_{ijk}\})$ is a triangulation of $M$.
\end{proof}

\subsection{Proof of Part (b) of Lemma \ref{isoperimetric}}Recall that
\begin{manualtheorem}{Part (b) of Lemma \ref{isoperimetric}}
Suppose $(M,g)$ is a closed Riemannian surface,
and $T$ is a geodesic triangulation of $(M,g)$, and $l\in\mathbb R^{E(T)}$ denotes the geodesic lengths of the edges, and $(T,l)_E$ or $(T,l)_H$ is $\epsilon$-regular.

\quad (b) There exists a constant $\delta=\delta(M,g,\epsilon)$ such that if $|l|<\delta$,
$(T,l)$ is $C$-isoperimetric for some constant $C=C(M,g,\epsilon)$.
\end{manualtheorem}
We first prove a continuous version.
\begin{lemma}\label{lemma 5.3}
Suppose $(M,g)$ is a closed Riemannian surface, and $\Omega\subset M$ is an open domain with $\partial\Omega$ being a finite disjoint union of piecewise smooth Jordan curves, then there exists a constant $C=C(M,g)>0$ such that
$$
\min\{|\Omega|,|M-\Omega|\}\leq C L^2
$$
where $L=s(\partial\Omega)$ denotes the length of $\partial\Omega$ in $(M,g)$.
\end{lemma}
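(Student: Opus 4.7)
The plan is to split on the size of $L$ with a threshold $L_0 = L_0(M,g) > 0$ to be determined. If $L \geq L_0$, the bound is immediate:
\[
\min\{|\Omega|_g, |M - \Omega|_g\} \leq \frac{|M|_g}{2} \leq \frac{|M|_g}{2 L_0^2}\, L^2.
\]
So the substantive work is in the regime $L < L_0$, which I would reduce to the Euclidean isoperimetric inequality via normal coordinates.

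By compactness of $M$, there exist positive constants $r_0$ (at most the injectivity radius of $(M,g)$) and $K$ such that every geodesic ball $B_g(p, r_0)$ admits normal coordinates which are $K$-bilipschitz to the Euclidean ones. Fix $L_0 < r_0/2$. Each connected component $\gamma_i$ of $\partial\Omega$ has length $L_i \leq L < L_0$, hence diameter at most $L_i/2 < r_0/4$, so $\gamma_i$ lies in some geodesic ball $B_g(p_i, r_0)$. In those normal coordinates the curve $\gamma_i$ is a piecewise smooth Jordan curve in an almost-Euclidean disk; by the Jordan curve theorem it bounds a topological disk $D_i \subset B_g(p_i, r_0)$, and by the classical Euclidean isoperimetric inequality together with the $K$-bilipschitz distortion one obtains
\[
|D_i|_g \leq C_1 L_i^2, \qquad\text{hence}\qquad \sum_i |D_i|_g \leq C_1 \sum_i L_i^2 \leq C_1 L^2,
\]
for some constant $C_1 = C_1(M,g)$.

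The curves $\{\gamma_i\}$ partition $M - \bigcup_i \gamma_i$ into finitely many open components, each of which is either contained in some $D_i$ or disjoint from $\bigcup_i \bar D_i$. Call the union of components of the second type the \emph{exterior} $E$. After further shrinking $L_0$ so that $C_1 L_0^2 < |M|_g/4$, the total area of the \emph{interior} components satisfies $|M|_g - |E|_g \leq \sum_i |D_i|_g \leq C_1 L^2 < |M|_g/4$, so $E$ is the unique component of area exceeding $|M|_g/2$. Since $\Omega$ and $M - \Omega$ are each unions of these components, exactly one of them contains $E$ and the other has total area at most $\sum_i |D_i|_g \leq C_1 L^2$. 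Combining with the large-$L$ case, $C = \max\{|M|_g/(2 L_0^2),\, C_1\}$ works. The main obstacle is the topological bookkeeping showing that at most one component of $M - \bigcup_i \gamma_i$ can be ``large''; this is what forces the small-$L$ threshold and the appeal to the Jordan curve theorem inside each normal coordinate chart.
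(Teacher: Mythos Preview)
Your argument is essentially the paper's: threshold on $L$, show each short boundary curve $\gamma_i$ bounds a small disk $D_i$, and conclude that one of $\Omega$, $M\setminus\Omega$ lies in $\bigcup_i D_i$. The only real difference is how the local bound $|D_i|_g\le C_1L_i^2$ is obtained: the paper uses Osserman's curvature-corrected isoperimetric inequality $L^2\ge|\Omega|\bigl(4\pi-2\int_\Omega K^+\bigr)$ for simply connected domains, while you pull back through bilipschitz normal coordinates and apply the flat isoperimetric inequality; both are fine. The one step you flag but do not actually carry out is that your exterior $E$ is a \emph{single} component: the paper handles this by observing that, since the $\gamma_i$ are disjoint and each $|D_i|_g<|M|_g/2$, the closed disks $\bar D_i$ are pairwise nested or disjoint, so $\bigcup_i\bar D_i$ is a finite disjoint union of closed disks, and removing finitely many disjoint closed disks from a closed connected surface leaves a connected set. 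With that nesting observation added, your sketch is complete.
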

\begin{proof}
If $\Omega$ is simply connected, then it is well known (See Theorem 4.3 in \cite{osserman1978isoperimetric}) that
$$
L^2\geq |\Omega|(4\pi-2\int_\Omega K^+)
$$
where $K^+(p)=\max\{0,K(p)\}$.
Pick $r=r(M,g)>0$ smaller than the injectivity radius of $(M,g)$, such that
$$
|{B(p,r)}|\cdot\|K\|_\infty\leq \pi
$$
for any $p\in M$.\par
Now we pick our constant
$$
C(M,g)=\max\{\frac{2}{\pi},\frac{|M|}{r^2}\}.
$$
If $L\geq r$, then $CL^2\geq |M|$ and we are done. If $\Omega\subset B(p,r)$ for some $p\in M$ and is connected, then without loss of generality we may assume $\Omega$ is simply connected by filling up the holes, and then
\begin{equation}
\label{33}
CL^2\geq \frac{2}{\pi}\cdot |\Omega|(4\pi-2\int_\Omega K^+)\geq |\Omega|\big(8-\frac{4}{\pi}\cdot |B(p,r)|\cdot\|K\|_\infty\big)\geq4|\Omega|
\end{equation}
and we are done.

If $\Omega$ has multiple connected components $\Omega_1,...,\Omega_n$ with the boundary lengths $L_1,...,L_n$ respectively, such that each $\Omega_i$ is in some Riemannian disk $B(p,r)$, then
$L\geq(L_1+...+L_n)/2$ since any component of $\partial\Omega$ is on at most two $\partial\Omega_i$'s. So by equation (\ref{33})

\begin{equation}
\label{34}
CL^2\geq \frac{1}{4}\sum_{i=1}^n CL_i^2\geq\sum_{i=1}^n |\Omega_i|=|\Omega|
\end{equation}
and we are done.

Now we assume $L<r$ and $\partial\Omega$ contains Jordan curves $\gamma_1,...,\gamma_n$ with lengths $L_1,...,L_n$ respectively. Since $L_i\leq r$, $\gamma_i$ is in some Riemannian disk $B(p,r)$.
By the Jordan-Schoenflies theorem, $\gamma_i$ separates $M$ into a smaller domain $U_i\subset B(p,r)$ and a larger domain $V_i=M-\bar U_i$, and $\bar U_i$ is a topological closed disk. For any $i\neq j$, since $\gamma_i$ and $\gamma_j$ are disjoint, $\bar U_i\subset \bar U_j$ or $\bar U_j\subset \bar U_i$ or $\bar U_i\cap\bar U_j=\emptyset$. So $\cup_{i=1}^n\bar U_i$ is a finite disjoint union of topological disks, and thus
 $M-\cup_{i=1}^n  \bar U_i$ is connected.
If $\Omega\subset\cup_{i=1}^n U_i$, then by equation (\ref{34}) we are done.
Otherwise, $M-\cup_{i=1}^n \bar U_i\subset\Omega$ and $M-\bar\Omega\subset\cup_{i=1}^n U_i$, and again
by equation (\ref{34}) $CL^2\geq |M-\bar\Omega|$ and we are done.
\end{proof}
\begin{figure}
  \centering
  \label{isoperimetric figure}
  \includegraphics[width=4.5in]{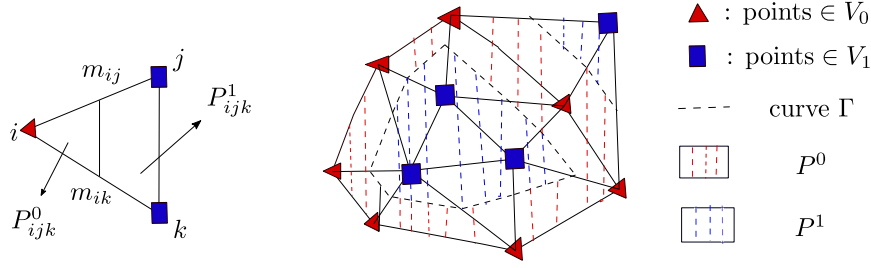}
  \caption{Decomposition of a triangulated surface}
\end{figure}
Now we prove part (b) of Lemma \ref{isoperimetric}  for the special cases that $(M,g)$ has constant curvature $0$ or $\pm 1$.
\begin{proof}[Proof of Part (b) of Lemma \ref{isoperimetric} for the cases of constant curvature $0$ or $\pm1$]
In this proof each triangle $\triangle ijk\in F(T)$ is identified as a geodesic triangle in $(M,g)$.
Assume $\delta<0.1$, $V_0\subset V$, and $V_1=V-V_0$. Let
$$
E_0=\{ij\in E:i,j\in V_0\},\quad
E_1=\{ij\in E:i,j\in V_1\}.
$$
Notice that $\partial V_{0}=\partial V_1$
and $E=E_0\cup E_1\cup\partial V_0$ is a disjoint union.

For any triangle $\triangle ijk\in F(T)$, 0 or 2 of its edges are in $\partial V_0$. So $F(T)=F_0\cup F_2$ where
\begin{align*}
F_0=&\{\triangle{ijk}\in F(T):\triangle ijk\text{ has 0 edges in }\partial V_0\},\text{ and}\\
F_2=&\{\triangle{ijk}\in F(T):\triangle ijk\text{ has 2 edges in }\partial V_0\}.
\end{align*}
If $\triangle ijk\in F_2$ and $ij,ik\in\partial V_0$, let $\gamma_{ijk}$ be the geodesic segment
in $\triangle ijk$ connecting the middle points $m_{ij}$ of $ij$, and $m_{ik}$ of $ik$. Then
by the triangle inequality $\frac{1}{2}(l_{ij}+l_{ik})\geq s(\gamma_{ijk})$.
$\gamma_{ijk}$ cut $\triangle ijk$ into two relative open domains $P_{ijk}^0$ and $P_{ijk}^1$ such that $P_{ijk}^0\cap V_0\neq\emptyset$ and $P_{ijk}^1\cap V_1\neq\emptyset$. Given $\triangle ijk\in F_0$,
\begin{enumerate}
\item if $i,j,k\in V_0$, denote $P^0_{ijk}=\triangle ijk$ and $P^1_{ijk}=\emptyset$, and

\item if $i,j,k\in V_1$, denote $P^1_{ijk}=\triangle ijk$ and $P^0_{ijk}=\emptyset$.
\end{enumerate}
The union
$$
\Gamma=\bigcup_{\triangle ijk\in F_2}\gamma_{ijk}
$$
is a finite disjoint union  of piecewise smooth Jordan curves in $(M,g)$, and
$$
P^0=\bigcup_{\triangle ijk\in F(T)}P^0_{ijk},\quad\text{ and }\quad
P^1=\bigcup_{\triangle ijk\in F(T)}P^1_{ijk}
$$
are two open domains of $(M,g)$ such that $\partial P^0=\Gamma$ and $P^1=M-\bar P^0$.
The above notations are shown in Figure 3.
By Lemma \ref{lemma 5.3}, it suffices to prove that if $\delta<0.1$,
\begin{equation}
\label{M removing bar U}
|P^1|\geq
\frac{\epsilon}{60}(|V|_l-|V_0|_l),
\end{equation}
and
\begin{equation}
\label{u geq l perimeter v0}
|P^0|\geq \frac{\epsilon}{60}|V_0|_l,
\end{equation}
and
\begin{equation}
\label{lg gamma leq l perimeter boundary v_0}
s(\Gamma)\leq|\partial V_0|_l.
\end{equation}

By part (b) of Lemma \ref{triangle area} and Remark \ref{remark}, we have that
\begin{align*}
|V|_l-|V_0|_l&=\sum_{ij\in E_1\cup\partial V_0}l_{ij}^2\leq \frac{4}{\epsilon}\sum_{ij\in E_1\cup\partial V_0}(|\triangle ijk|+|\triangle ijk'|)
\leq\frac{12}{\epsilon}\sum_{\triangle ijk\in F:\triangle ijk\cap P^1\neq\emptyset}|\triangle ijk|\\
\leq &\frac{60}{\epsilon}\sum_{\triangle ijk\in F}
|P^1_{ijk}|=\frac{60}{\epsilon}|P^1|,
\end{align*}
and
$$
|V_0|_l=\sum_{ij\in E_0}l_{ij}^2\leq
\sum_{ij\in E_0\cup\partial V_0}l_{ij}^2\leq\frac{60}{\epsilon}|P^0|,
$$
and
$$
|\partial V_0|_l=\sum_{ij\in\partial V_0}l_{ij}
=\sum_{\triangle ijk\in F_2:jk\notin\partial V_0}\frac{1}{2}(l_{ij}+l_{ik})
\geq\sum_{\triangle ijk\in F_2}s(\gamma_{ijk})=s(\Gamma).
$$
\end{proof}
Now let us prove part (b) of Lemma \ref{isoperimetric} for general smooth surfaces.
\begin{proof}[Proof of Part (b) of Lemma \ref{isoperimetric}]
By the Uniformization theorem, there exists $u=u_{M,g}\in C^\infty(M)$ such that $e^{2u}g$ has constant curvature $\pm1$ or 0.
By part (a) of Lemma \ref{isoperimetric}, if $\delta$ is sufficiently small, we can find a geodesic triangulation $T'$ in $(M,e^{2u}g)$ such that $V(T)=V(T')$, and $T,T'$ are homotopic relative to $V$. Further by the inequalities in (a) and (d) in the proof of Lemma \ref{isoperimetric} (a), if $\delta$ is sufficiently small,
any inner angle of $T'$ in $(M,e^{2u}g)$ is at least $\epsilon/2$. Let $\bar l\in\mathbb R^{E(T)}\cong\mathbb R^{E(T')}$ denote the geodesic edge lengths in $(M,e^{2u}g)$. Then by our result on surfaces of constant curvature $\pm1$ or $0$, if $\delta=\delta(M,e^{2u}g)$ is sufficiently small, $(T,\bar l)$ is $C$-isoperimetric for some constant $C=C(M,e^{2u}g)>0$.
Since
$
e^{-\|u\|_\infty}\leq{\bar l_{ij}}/{l_{ij}}\leq e^{\|u\|_\infty},
$
$(T,l)$ is $(e^{4\|u\|_\infty}C)$-isoperimetric.
\end{proof}

\section{Proof of the Discrete Elliptic Estimate}
\label{proof of the key estimates on graphs}
Recall that
\begin{manualtheorem}{Lemma 2.3}
Assume $(G,l)$ is $C_1$-isoperimetric, and $x\in\mathbb R^E_A,\eta\in\mathbb R^E_{>0},C_2>0,C_3>0$ are such that
\begin{enumerate}[label=(\roman*)]
\item $|x_{ij}|\leq C_2 l_{ij}^2$ for any $ij\in E$, and

\item $\eta_{ij}\geq C_3$ for any  $ij\in E$.
\end{enumerate}
Then
\begin{equation*}
|\Delta^{-1}_\eta\circ div (x))|\leq \frac{4C_2\sqrt{C_1+1}}{C_3}|l|\cdot|V|_l^{1/2}.
\end{equation*}
Further if $y\in\mathbb R^V$ and $C_4>0$ and $D\in\mathbb R^{V\times V}$ is a diagonal matrix such that
$$
|y_i|< C_4 D_{ii}|l|\cdot|V|_l^{1/2}
$$
for any $i\in V$, then
\begin{equation*}
|(D-\Delta_\eta)^{-1} (div(x)+y)|\leq
\left(C_4+\frac{8C_2\sqrt{C_1+1}}{C_3}\right)|l|\cdot|V|_l^{1/2}.
\end{equation*}
\end{manualtheorem}
We will first prove Lemma \ref{estimate for divergence operator} assuming  Lemma \ref{Proof for square graph estimate}, and then prove Lemma \ref{Proof for square graph estimate}.

\begin{proof}
Assume
\begin{enumerate}
\item $z=\Delta^{-1}(div (x))$, and

\item $a,b\in V$ are such that $z_a=\max_{i}z_i\geq0$ and $z_b=\min_iz_i\leq0$, and $a\neq b$, and

\item $u\in\mathbb R^V$ is such that
$$
(\Delta u)_{a}=1,\quad\text{and}\quad (\Delta u)_{b}=-1,\quad\text{and}\quad (\Delta u)_i=0 \quad\forall i\neq a,b.
$$
\end{enumerate}
By the Green's identity Lemma \ref{green's identity} and Lemma \ref{Proof for square graph estimate},

\begin{align*}
&|z|\leq z_a-z_b=\sum_{i}z_i(\Delta u)_i=\sum_{i}u_i(\Delta z)_i=\sum_{i}u_i\cdot div(x)_i
\\
=&\sum_i u_i\sum_{j:j\sim i}x_{ij}=
\sum_{ij\in E}(u_i-u_j)\cdot x_{ij}
\leq C_2\sum_{ij\in E}|u_i-u_j|\cdot l_{ij}^2
\leq \frac{4C_2\sqrt{C_1+1}}{C_3}|l|\cdot|V|_l^{1/2}.
\end{align*}

Let
$$
w=(D-\Delta)^{-1}(div(x)+y)+z,$$
and then
\begin{equation}
\label{40}
(D-\Delta)w=div(x)+y+(D-\Delta)z=y+Dz.
\end{equation}

Assume
$w_i=\max_{k}w_k$, and then by comparing the $i$-th component in (\ref{40}) we have
$$
D_{ii} w_i\leq((D-\Delta)w)_i= y_i+D_{ii}z_i
\leq y_i+D_{ii}|z|.
$$
So
$$
\max_k w_k=w_i\leq |z|+y_i/D_{ii}\leq |z|+\max_k(y_k/D_{kk})
$$
and
similarly we also have that
$$
\min_k w_k\geq -|z|+\min_k(y_k/D_{kk}).
$$
So
$$
|(D-\Delta)^{-1}(div(x)+y)|\leq|w|+|z|\leq2|z|+\max_k(|y_k|/D_{kk})
$$ and we are done.

\end{proof}

	\begin{lemma}
		\label{Proof for square graph estimate}
Assume $(G,l)$ is $C_1$-isoperimetric, and the weight $\eta\in\mathbb R^E_{>0}$ satisfies that $\eta_{ij}\geq C_2$ for some constant $C_2>0$, and $u\in\mathbb R^V$ satisfies that
$$
(\Delta u)_{a}=1,\quad\text{and}\quad (\Delta u)_{b}=-1,\quad\text{and}\quad (\Delta u)_i=0 \quad\forall i\neq a,b.
$$
then
	\begin{equation*}
	\sum_{ij\in E}l_{ij}^{2}|u_i-u_j|\leq \frac{4\sqrt{C_1+1}}{C_2}|l|\cdot |V|_{l}^{1/2}.
	\end{equation*}
\end{lemma}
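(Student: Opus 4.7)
The plan is to combine a discrete coarea reduction, Cauchy--Schwarz applied to the flow identity, and the isoperimetric hypothesis invoked level-by-level. Assume without loss of generality that $u_b > u_a$; by the discrete maximum principle applied to $\Delta u = \delta_a - \delta_b$, the vertices $a$ and $b$ realize the minimum and maximum of $u$. Define $V_t = \{i \in V : u_i > t\}$ and $P(t) = |\partial V_t|_l$. The layer-cake identity $|u_i - u_j| = \int_{\mathbb R} \mathbf{1}[ij \in \partial V_t]\, dt$ together with the crude pointwise bound $\sum_{ij \in \partial V_t} l_{ij}^2 \leq |l|\,P(t)$ reduces the problem to showing
\[
\int_{u_a}^{u_b} P(t)\, dt \leq \frac{4\sqrt{C_1+1}}{C_2}\,|V|_l^{1/2}.
\]

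First I would derive a pointwise bound on $P(t)$. Summing $(\Delta u)_i$ over $i \in V_t$ for $t \in (u_a, u_b)$ telescopes to the flow identity $\sum_{ij \in \partial V_t} \eta_{ij}|u_i - u_j| = 1$. Writing $l_{ij} = (l_{ij}/\sqrt{\eta_{ij}|u_i - u_j|})\cdot\sqrt{\eta_{ij}|u_i - u_j|}$ and applying Cauchy--Schwarz yields
\[
P(t)^2 \leq Y(t) := \sum_{ij \in \partial V_t} \frac{l_{ij}^2}{\eta_{ij}|u_i - u_j|}.
\]
Swapping the $t$-integration with the $ij$-summation and using $\eta_{ij} \geq C_2$ then gives the integrated control $\int Y(t)\, dt = \sum_{ij} l_{ij}^2/\eta_{ij} \leq |V|_l/C_2$.

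Second, to promote the $L^2$-type bound $\int P^2\, dt \leq |V|_l/C_2$ to an $L^1$-type bound on $\int P\, dt$ of the correct order $|V|_l^{1/2}/C_2$, I would invoke the isoperimetric hypothesis $P(t)^2 \geq \min(|V_t|_l,\,|V|_l - |V_t|_l)/C_1$ at each level. Enumerating the vertices so that $u_{v_1} < \cdots < u_{v_n}$ and setting $U_k = \{v_1,\ldots,v_k\}$, $A_k = |U_k|_l$, $P_k = |\partial U_k|_l$, $\delta_k = u_{v_{k+1}} - u_{v_k}$, the $t$-integral equals $\sum_k \delta_k P_k$ with the pointwise and integrated controls $P_k^2 \leq Y_k$ and $\sum_k \delta_k Y_k \leq |V|_l/C_2$. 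Mirroring the continuous $2$D template---where $\ell(t)^2 \leq -A_V'(t)$ combined with $\ell(t) \geq \sqrt{A_{\min}/C_1}$ gives $\ell\,dt \leq \sqrt{C_1/A_{\min}}\,|dA_V|$ and hence $\int \ell\,dt \leq 4\sqrt{C_1|M|/2}$---I would split $\sum_k \delta_k P_k$ at the median level $A_k = |V|_l/2$ and apply Cauchy--Schwarz against a weight adapted to $\sqrt{\min(A_k,\,|V|_l - A_k)}$ on each half, chaining with the isoperimetric lower bound and the integrated bound on $Y_k$ to close the estimate. The $+1$ in $\sqrt{C_1 + 1}$ should absorb residual discretization terms coming from the levels near $A_k = 0$ and $A_k = |V|_l$, where the continuous argument degenerates.

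The main obstacle is this last step. The naive Cauchy--Schwarz $(\int P\,dt)^2 \leq (u_b - u_a)\int Y\,dt$ is far too weak, because the effective resistance $u_b - u_a$ between the source and sink can grow like $\log(1/|l|)$ on refined meshes (as in a uniform $2$D grid), while the target bound must be independent of mesh refinement. Avoiding this logarithmic loss requires invoking the isoperimetric inequality at every level rather than once globally through the Dirichlet energy, i.e., a discrete analogue of the coarea combined with the isoperimetric change of variable $t \mapsto A$ that underlies the $L^1$--Sobolev inequality on surfaces. Once this step is carried out, the lemma follows by multiplying by $|l|$.
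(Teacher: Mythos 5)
Your outline reproduces the paper's proof almost ingredient for ingredient: the level sets $V_t$, the flow identity $\sum_{ij\in\partial V_t}\eta_{ij}|u_i-u_j|=1$ obtained by summing $\Delta u$ over a sublevel set, the per-level Cauchy--Schwarz bound $P(t)^2\le Y(t)$, the integrated bound $\int Y\,dt=\sum_{ij}l_{ij}^2/\eta_{ij}\le |V|_l/C_2$, the split at a median level, and level-by-level isoperimetry are all exactly the paper's inequalities (the paper packages the bookkeeping as measures $\mu$ and $d\nu=l_{ij}\,d\mu$ on the metrized $1$-skeleton, so that $\delta_kY_k$ is essentially the $\nu$-measure of the slab $\{p_{k-1}<u<p_k\}$ divided by $\eta$). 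You also correctly diagnose why the naive global Cauchy--Schwarz fails.

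The one step you leave unexecuted is, as you say yourself, the crux, and the specific device you name would stall if taken literally. A ``change of variable $t\mapsto A$'' with $A_k=|U_k|_l$ the vertex area requires controlling $\delta_kY_k$ by the increment $A_{k+1}-A_k$, and this is false: a single edge can lie on $\partial V_t$ for an entire range of levels, so the $Y$-mass accumulated across levels is not dominated by the growth of the interior area. The correct cumulative variable is the partial sum $S_k=\sum_{j\le k}\delta_jY_j$ of the slab measures themselves, and the missing link between $S_k$ and the isoperimetric hypothesis is the observation that $C_2S_k\le\nu(u<p_k)\le |V_k|_l+\sum_{ij\in\partial V_k}l_{ij}^2\le C_1|\partial V_k|_l^2+|\partial V_k|_l^2$: the interior edges are controlled by isoperimetry and the partially-crossed boundary edges contribute at most $|\partial V_k|_l^2$. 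This is where the $+1$ in $\sqrt{C_1+1}$ actually comes from --- it absorbs the boundary-edge mass at \emph{every} level, not residual terms near the endpoints as you suggest. Once this is in place the estimate closes by telescoping rather than by a weighted Cauchy--Schwarz: $\delta_kP_k\le\delta_kY_k/P_k\le\sqrt{(C_1+1)/C_2}\,(S_k-S_{k-1})/\sqrt{S_k}\le 2\sqrt{(C_1+1)/C_2}\,(\sqrt{S_k}-\sqrt{S_{k-1}})$, which sums to $2\sqrt{C_1+1}\,|V|_l^{1/2}/C_2$ on each side of the median, giving the stated constant. So the architecture is right and the first two stages are correct; you should replace the vaguely described final Cauchy--Schwarz by this telescoping against $S_k$ (equivalently, the paper's $\nu(u(x)<p_k)$).
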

\begin{proof}
We consider the 1-skeleton $X$ of the graph $G$ with edge length $l$. More specifically $X$ can be constructed as follows. Let $\tilde X$ be a disjoint union of line segments $\{e_{ij}:ij\in E\}$ where each $e_{ij}$ has length $l_{ij}$ and two endpoints $v_{ij}^i,v_{ij}^j$. Then we obtain a connected quotient space $X$ by
identifying the points in $v_i:=\{v^i_{ij}:ij\in E\}$ for any $i\in V$.

Assume $\mu$ is the standard 1-dimensional Lebesgue measure on $X$ such that $\mu(e_{ij})=l_{ij}$. Let $\nu$ be another measure on $X$ such that ${d\nu}/{d\mu}\equiv l_{ij}$ on edge $e_{ij}$. Then we have that $\nu(e_{ij})=l^2_{ij}$ and $\nu(X)=|V|_{l}$.

Assume $u:V\rightarrow\mathbb R$ is linearly extended to the 1-skeleton $X$, and then by maximum principle, $u_a=\min(u)$ and $u_b=\max(u)$.
Let $\bar u\in(u_a,u_b)$ be such that
	\begin{equation*}
	\label{halfmeasure}
	\begin{array}{c}
	\nu(x\in X:u(x)<\bar u)\leq |V|_l/2,\\
	\\
	\nu(x\in X:u(x)>\bar u)\leq |V|_l/2.
	\end{array}
	\end{equation*}

Let $f(x)=l_{ij}|u_i-u_j|$ for $x\in e_{ij}$, and then $f$ is well-defined almost everywhere on $X$, and
	$$
	\sum_{ij\in E}l_{ij}^2|u_i-u_j|=\int_{X}f(x)d\mu\leq
\int_{u_a\leq u(x)\leq \bar u}f(x)d\mu
+
\int_{\bar u\leq u(x)\leq u_b}f(x)d\mu.
	$$
We will prove
	$$
\int_{u_a\leq u(x)\leq \bar u}f(x)d\mu\leq\frac{2\sqrt{C_1+1}}{C_2}|l|\cdot\sqrt{\nu(u(x)< \bar u)}
\leq\frac{2\sqrt{C_1+1}}{C_2}|l|\cdot|V|_l^{1/2}
	$$
and then by the symmetry $\int_{\bar u\leq u(x)\leq u_b}f(x)d\mu$ has the same upper bound and we are done.

	Let $u_a=p_0< p_1<\cdots< p_s=\bar u$ such that $\{p_0\cdots p_{s-1}\}=\{u_i:i\in V,u_i<\bar u\}$.
Noticing that $\int_{u(x)=p}f(x)d\mu=0$ for any $p\in\mathbb R$,
	it suffices to prove that for any $k\in\{1,...,s\}$
	$$
	\int_{p_{k-1}<u(x)<p_k}f(x)d\mu\leq \frac{2\sqrt{C_1+1}}{C_2}|l|\cdot\bigg({\sqrt{\nu(u(x)<p_k)}}-\sqrt{\nu(u(x)<p_{k-1})}\bigg).
	$$
In the remaining of the proof, we fix a $k\in\{1,...,s\}$ and
let $V_k=\{i\in V: u(i)\leq p_{k-1}\}$. Then for any $i\in V_k$ and $ij\in \partial V_k$, $u_j\geq u_i$ and then
	\begin{align*}
	0&=\sum\limits_{i\in V_k-\{a\}}(\Delta u)_{i}=\sum\limits_{i\in V_k-\{a\}}\sum\limits_{j\sim i}\eta_{ij}(u_j-u_i)\\
	&=\sum\limits_{i:i\sim a}\eta_{ia}(u_a-u_i)+\sum\limits_{ij\in \partial V_k:i\in V_k}\eta_{ij}(u_j-u_i)\\
	&\geq-(\Delta u)_a+C_2\sum_{ij\in\partial V_k}|u_j-u_i|.
	\end{align*}
	Therefore,
	\begin{equation}
	\label{boudary estimate}
	C_2\sum_{ij\in\partial V_k}|u_j-u_i|\leq(\Delta u)_a=1.
	\end{equation}

	Let $e_{ij}'=\{x:p_{k-1}<u(x)<p_k\}\cap e_{ij}$, and $l'_{ij}=\mu(e_{ij}')$. Then  $l_{ij}'=0$ if
$ij\notin\partial V_{k}$. If $ij\in \partial V_k$, let $i'$ and $j'$ be the two endpoints of $e_{ij}'$. Then $\{u(i'),u(j')\}= \{p_{k-1},p_k\}$, and
	$$
	\frac{l_{ij}'}{l_{ij}}=\frac{|u(j')-u(i')|}{|u_j-u_i|}= \frac{p_{k}-p_{k-1}}{|u_j-u_i|}.
	$$
	So
	\begin{equation}
\label{30}
	\int_{p_{k-1}<u(x)<p_k}f(x)d\mu\\
	=\sum_{ij\in \partial V_k}l_{ij}'l_{ij}|u_i-u_j|\\
	=(p_k-p_{k-1}) \sum_{ij\in \partial V_k}l_{ij}^2\\
	\leq(p_k-p_{k-1})|l|\cdot|\partial V_k|_l.
	\end{equation}
	On the other hand, by inequality (\ref{boudary estimate}) and Cauchy's inequality,
\begin{align}
\begin{split}
\label{31}
	&\nu(p_{k-1}<u(x)<p_k)=\sum_{ij\in \partial V_k}l_{ij}'l_{ij}=
	(p_k-p_{k-1})\sum_{ij\in \partial V_k}\frac{l_{ij}^2}{|u_j-u_i|}\\
	\geq&(p_k-p_{k-1})\left(\sum_{ij\in \partial V_k}\frac{l_{ij}^2}{|u_j-u_i|}\right)\cdot\left(\sum_{ij\in\partial V_k}|u_j-u_i|\right)\cdot C_2\\
\geq& C_2(p_k-p_{k-1}){(\sum_{ij\in{\partial V_k}}{l_{ij}})^2}=C_2(p_k-p_{k-1})|\partial V_k|_l^2.
\end{split}
\end{align}
	Since $(G,l)$ is $C_1$-isoperimetric, we have that
\begin{equation}
\label{32}
	\nu(u(x)<p_k)\leq|V_k|_l+\sum_{ij\in\partial V_k}l_{ij}^2
	\leq C_1|\partial V_k|_l^2+|\partial V_k|_l^2
	=(C_1+1)|\partial V_k|_l^2.
\end{equation}

Divide(\ref{31}) by $\sqrt{(\ref{32})}$ and then we have
\begin{equation}
\label{33combine}
\frac{\nu(p_{k-1}<u(x)<p_k)}{\sqrt{\nu(u(x)<p_k)}}\geq \frac{C_2}{\sqrt{C_1+1}}(p_k-p_{k-1})|\partial V_k|_l.
\end{equation}
Combining equations (\ref{30}) and (\ref{33combine}) and then
\begin{align*}
	&\int_{p_{k-1}<u(x)<p_k}f(x)d\mu	
\leq\frac{\sqrt{C_1+1}}{C_2}\cdot|l|\cdot\frac{\nu(p_{k-1}<u(x)<p_k)}{\sqrt{\nu(u(x)<p_k)}}\\
	\leq&\frac{\sqrt{C_1+1}}{C_2}\cdot|l|\cdot \frac{\nu(u(x)<p_k)-\nu(u(x)< p_{k-1})}{\sqrt{\nu(u(x)<p_k)}}\\
	\leq&\frac{2\sqrt{C_1+1}}{C_2}\cdot|l|\cdot \frac{\nu(u(x)<p_k)-\nu(u(x)< p_{k-1})}{\sqrt{\nu(u(x)<p_k)}+\sqrt{\nu(u(x)<p_{k-1})}}\\
	=&\frac{2\sqrt{C_1+1}}{C_2}\cdot|l|\cdot\bigg({\sqrt{\nu(u(x)<p_k)}}-\sqrt{\nu(u(x)<p_{k-1})}\bigg)
\end{align*}
and we are done.
\end{proof}

\begin{appendices}
In this appendix we prove Lemma \ref{Euclidean infinitesimal} and \ref{hyperbolic infinitesimal} and \ref{estimate for Euclidean triangle} and \ref{estimate for hyperbolic triangle}.
\begin{manualtheorem}{Lemma 3.3}
Given a Euclidean triangle $\triangle ABC$, if we view $A,B,C$ as functions of the edge lengths $a,b,c$, then
 \begin{equation*}
 \label{differential for Euclidean triangle}
 \frac{\partial A}{\partial b}=-\frac{\cot C}{b},\quad\quad
 \frac{\partial A}{\partial a}=\frac{\cot B+\cot C}{a}=\frac{1}{b\sin C}.
 \end{equation*}
Further if $(u_A,u_B,u_C)\in\mathbb R^3$ is a discrete conformal factor, and
\begin{equation*}
a=e^{\frac{1}{2}(u_B+u_C)}a_0,\quad b=e^{\frac{1}{2}(u_A+u_C)}b_0,
\quad c=e^{\frac{1}{2}(u_A+u_B)}c_0
\end{equation*}
for some constants $a_0,b_0,c_0\in\mathbb R_{>0}$, then
\begin{equation}
\label{differential for Euclidean conformal factor}
\frac{\partial A}{\partial u_B}=\frac{1}{2}\cot C,\quad
\frac{\partial A}{\partial u_A}=-\frac{1}{2}(\cot B+\cot C).
\end{equation}
\end{manualtheorem}
\begin{proof}
Take the partial derivative  on
$$
\cos A=\frac{b^2+c^2-a^2}{2bc}
$$
and we have
$$
-\sin A\frac{\partial A}{\partial b}=\frac{2b}{2bc}-\frac{b^2+c^2-a^2}{2b^2c}=\frac{b^2+a^2-c^2}{2b^2c}=\frac{a\cos C}{bc},
$$
and
$$
\frac{\partial A}{\partial b}=-\frac{a\cos C}{bc\sin A}=-\frac{\cos C}{b\sin C}=-\frac{\cot C}{b}.
$$
Similarly
$$
-\sin A\frac{\partial A}{\partial a}=-\frac{a}{bc}
$$
and
$$
\frac{\partial A}{\partial a}=\frac{a}{bc\sin A}=\frac{1}{b\sin C}=\frac{\sin A}{a\sin B\sin C}
=\frac{\sin B\cos C+\sin C\cos B}{a\sin B\sin C}=\frac{\cot B+\cot C}{a}.
$$
Then equation (\ref{differential for Euclidean conformal factor}) can be computed easily.
\end{proof}

\begin{manualtheorem}{Lemma 3.4}
Given a hyperbolic triangle $\triangle ABC$, if we view $A,B,C$ as functions of the edge lengths $a,b,c$, then
 \begin{equation*}
 \label{differential for hyperbolic triangle}
 \frac{\partial A}{\partial b}=-\frac{\cot C}{\sinh b},\quad\quad
 \frac{\partial A}{\partial a}=\frac{1}{\sinh b\sin C}.
 \end{equation*}

Further if $(u_A,u_B,u_C)\in\mathbb R^3$ is a discrete conformal factor, and
\begin{equation*}
\sinh\frac{a}{2}=e^{\frac{1}{2}(u_B+u_C)}\sinh\frac{a_0}{2},\quad \sinh\frac{b}{2}=e^{\frac{1}{2}(u_A+u_C)}\sinh\frac{b_0}{2},
\quad \sinh\frac{c}{2}=e^{\frac{1}{2}(u_A+u_B)}\sinh\frac{c_0}{2}
\end{equation*}
for some constants $a_0,b_0,c_0\in\mathbb R_{>0}$, then
\begin{equation}
\label{differential for hyperbolic conformal factor }
\frac{\partial A}{\partial u_B}=\frac{1}{2}\cot \tilde C(1-\tanh^2\frac{c}{2}),
\end{equation}
and
\begin{equation}
\label{differential for hyperbolic conformal factor of same index}
\frac{\partial A}{\partial u_A}
=-\frac{1}{2}\cot \tilde B(1+\tanh^2\frac{b}{2})-\frac{1}{2}\cot \tilde C(1+\tanh^2\frac{c}{2}),
\end{equation}
where
$
\tilde B=\frac{1}{2}(\pi+B-A-C)
$
and
$
\tilde C=\frac{1}{2}(\pi+C-A-B)/2.
$

\end{manualtheorem}
\begin{proof}
Take the partial derivative on
$$
\cos A=\frac{\cosh b\cosh c-\cosh a}{\sinh b\sinh c}
$$
and we have
\begin{align*}
-\sin A\frac{\partial A}{\partial b}=
&\frac{\sinh b\cosh c}{\sinh b\sinh c}-
\frac{\cosh^2 b\cosh c-\cosh a\cosh b}{\sinh^2 b\sinh c}\\
=&\frac{\cosh a\cosh b-\cosh c}{\sinh^2 b\sinh c}=\frac{\sinh a}{\sinh b\sinh c}\cos C,
\end{align*}
and then by the hyperbolic law of sines,
$$
\frac{\partial A}{\partial b}=-\frac{\cos C}{\sinh b\sin C}=-\frac{\cot C}{\sinh b}.
$$
Similarly
$$
-\sin A\frac{\partial A}{\partial a}=-\frac{\sinh a}{\sinh b\sinh c}
$$
and then again by the hyperbolic law of sines
$$
\frac{\partial A}{\partial a}=\frac{1}{\sinh b\sinh C}.
$$
To prove (\ref{differential for hyperbolic conformal factor }) and (\ref{differential for hyperbolic conformal factor of same index}) we need to compute
$$
\frac{\partial c}{\partial u_A}=
\frac{\partial\sinh(c/2)}{\partial u_A}\bigg/\frac{\partial\sinh(c/2)}{\partial c}
=\frac{1}{2}\sinh\frac{c}{2}\bigg/\left(\frac{1}{2}\cosh\frac{c}{2}\right)=\tanh\frac{c}{2},
$$
and other similar formulae hold.

		Since
		$$
		\tanh\frac{x}{2}=\frac{\sinh\frac{x}{2}}{\cosh\frac{x}{2}}=\frac{\sinh\frac{x}{2}\cosh\frac{x}{2}}{\cosh^2\frac{x}{2}}=\frac{\sinh x}{\cosh x+1}
		$$
		and
		\begin{equation}
\label{37}
		\cosh b+1=\frac{\cos A\cos C+\cos B}{\sin A\sin C}+1=\frac{\cos(A-C)+\cos B}{\sin A\sin C},
		\end{equation}
		we have
		\begin{align*}
		&\frac{\tanh\frac{b}{2}}{\tanh\frac{c}{2}}=\frac{\sinh b}{\sinh c}\cdot\frac{\cosh c+1}{\cosh b+1}\\
		=&\frac{\sin B}{\sin C}\cdot\frac{\cos (A-B)+\cos C}{\sin A\sin B}\cdot\frac{\sin A\sin C}{\cos(A-C)+\cos B}\\
		=&\frac{\cos (A-B)+\cos C}{\cos(A-C)+\cos B},
		\end{align*}
		and then
		\begin{align*}
		-\cos A+\frac{\tanh \frac{b}{2}}{\tanh\frac{c}{2}}=&\frac{\cos (A-B)+\cos C-\cos(A-C)\cos A-\cos B\cos A}{\cos(A-C)+\cos B}\\
		=&\frac{(\cos(A-B)-\cos B\cos A)+(\cos C-\cos(A-C)\cos A)}{\cos(A-C)+\cos B}\\
		=&\frac{\sin A\sin B+\sin A\sin(A-C)}{\cos(A-C)+\cos B}\\
		=&\sin A\cdot\frac{\sin B+\sin (A-C)}{\cos(A-C)+\cos B}\\
		=&\sin A\cdot\frac{\sin\frac{B+A-C}{2}\cos\frac{B-A+C}{2}}{\cos\frac{A+B-C}{2}\cos\frac{A-B-C}{2}}\\
		=&\sin A\cdot\tan\frac{A+B-C}{2}\\
		=&\sin A\cot\tilde C,
		\end{align*}
and then
		\begin{align*}
		\frac{\partial B}{\partial u_A}=&\frac{\partial B}{\partial c}\frac{\partial c}{\partial u_A}+\frac{\partial B}{\partial b}\frac{\partial b}{\partial u_A}\\
		=&-\frac{\cot A}{\sinh c}\tanh\frac{c}{2}+\frac{1}{\sin A\sinh c}\tanh\frac{b}{2}\\
		=&\frac{1}{2\cosh^2\frac{c}{2}}(-\frac{\cos A}{\sin A}+\frac{1}{\sin A}\frac{\tanh\frac{b}{2}}{\tanh\frac{c}{2}})\\
		=&\frac{1}{2}(1-\tanh^2\frac{c}{2})\frac{1}{\sin A}(-\cos A+\frac{\tanh\frac{b}{2}}{\tanh\frac{c}{2}})\\
		=&\frac{1}{2}(1-\tanh^2\frac{c}{2})\cot\tilde C.
		\end{align*}
By the symmetry equation (\ref{differential for hyperbolic conformal factor }) is true, and for the equation (\ref{differential for hyperbolic conformal factor of same index}), we have that
		$$
		\frac{\partial A}{\partial u_A}=\frac{\partial A}{\partial c}\frac{\partial c}{\partial u_A}+\frac{\partial A}{\partial b}\frac{\partial b}{\partial u_A}=-\frac{\cot B}{\sinh c}\tanh\frac{c}{2}-\frac{\cot C}{\sinh b}\tanh\frac{b}{2}.
		$$
		So we need to show
		$$
		-\frac{\cot B}{\sinh c}\tanh\frac{c}{2}-\frac{\cot C}{\sinh b}\tanh\frac{b}{2}=
		-\frac{1}{2}\cot\tilde C(1+\tanh^2\frac{c}{2})-\frac{1}{2}\cot\tilde B(1+\tanh^2\frac{b}{2}).
		$$
		Since
		$$
		\frac{\tanh\frac{x}{2}}{\sinh x}=\frac{\sinh\frac{x}{2}}{2\sinh\frac{x}{2}\cosh^2\frac{x}{2}}=\frac{1}{\cosh^2\frac{x}{2}}=\frac{2}{\cosh x+1}
		$$
		and
		$$
		1+\tanh^2 \frac{x}{2}=\frac{\cosh^2\frac{x}{2}+\sinh^2\frac{x}{2}}{\cosh^2\frac{x}{2}}=\frac{2\cosh x}{\cosh x+1},
		$$
		we only need to show
		$$
		\frac{\cot B}{\cosh c+1}+\frac{\cot C}{\cosh b+1}=\cot\tilde C\frac{\cosh c}{\cosh c+1}+\cot\tilde B\frac{\cosh b}{\cosh b+1}.
		$$
		We will show that
		$$
		\cot\tilde B\frac{\cosh b}{\cosh b+1}-\frac{\cot B}{\cosh c+1}
		$$
		is anti-symmetric with respect to $B$ and $C$.
		Recall equation (\ref{37}) and we have that
		$$
		\cosh b+1=\frac{\cos( A-C)+\cos B}{\sin A\sin C}=\frac{2\cos\frac{A+B-C}{2}\cos\frac{B+C-A}{2}}{\sin A\sin C},
		$$
		and
		$$
		\frac{\cosh b}{\cosh b+1}=\frac{\cos A\cos C+\cos B}{\cos( A-C)+\cos B}=\frac{\cos A\cos C+\cos B}{2\cos\frac{A+B-C}{2}\cos\frac{B+C-A}{2}}
		$$
		and
		$$
		\cot\tilde B=\tan(\frac{\pi}{2}-\tilde B)=\tan\frac{A+C-B}{2}.
		$$
		So
		\begin{align*}
		&\cot\tilde B\frac{\cosh b}{\cosh b+1}-\frac{\cot B}{\cosh c+1}\\
		=&\tan\frac{A+C-B}{2}\cdot
		\frac{\cos A\cos C+\cos B}{2\cos\frac{A+B-C}{2}\cos\frac{B+C-A}{2}}
		-\cot B\frac{\sin A\sin B}{2\cos\frac{A+C-B}{2}\cos\frac{B+C-A}{2}}\\
		=&\frac{\sin\frac{A+C-B}{2}(\cos A\cos C+\cos B)-\sin A\cos B\cos\frac{A+B-C}{2}}
		{2\cos\frac{A+C-B}{2}\cos\frac{B+C-A}{2}\cos\frac{A+B-C}{2}}.
		\end{align*}
		The denominator in the above fraction is symmetric, so we only need to show the numerator is anti-symmetric with respect to $B,C$.
		\begin{align*}
		&\sin\frac{A+C-B}{2}(\cos A\cos C+\cos B)-\sin A\cos B\cos\frac{A+B-C}{2}\\
		=&(\sin\frac{A}{2}\cos\frac{C-B}{2}+\sin\frac{C-B}{2}\cos\frac{A}{2})(\cos A\cos C+\cos B)\\
		&-\sin A\cos B(\cos\frac{A}{2}\cos\frac{C-B}{2}+\sin\frac{A}{2}\sin\frac{C-B}{2})\\
		=&\sin\frac{C-B}{2}(\cos\frac{A}{2}\cos A\cos C+\cos\frac{A}{2}\cos B-\sin A\cos B\sin\frac{A}{2})\\
		&+\cos\frac{C-B}{2}(\sin\frac{A}{2}\cos A\cos C+\sin\frac{A}{2}\cos B-\sin A\cos B\cos\frac{A}{2})\\
		=&\sin\frac{C-B}{2}(\cos\frac{A}{2}\cos A\cos C+\cos A\cos B\cos\frac{A}{2})\\
		&+\cos\frac{C-B}{2}(\sin\frac{A}{2}\cos A\cos C-\cos A\cos B\sin\frac{A}{2})\\
		=&\sin\frac{C-B}{2}\cos A\cos\frac{A}{2}(\cos C+\cos B)+\cos\frac{C-B}{2}\sin\frac{A}{2}\cos A(\cos C-\cos B)
		\end{align*}
		is indeed anti-symmetric with respect to $B,C$.
\end{proof}

\begin{manualtheorem}{Lemma 3.5}
Given a Euclidean triangle $\triangle ABC$, if all the angles in $\triangle ABC$ are at least $\epsilon>0$, and $\delta<\epsilon^2/48$, and
$$
|a'-a|\leq \delta a,\quad |b'-b|\leq \delta a,\quad |c'-c|\leq \delta c,
$$
then $a',b',c'$ form a Euclidean triangle with opposite inner angles $A',B',C'$ respectively, and
$$
|A'-A|\leq\frac{24}{\epsilon}\delta,
$$
and
$$
\bigg||\triangle A'B'C'|-|\triangle ABC|\bigg|\leq \frac{576}{\epsilon^2}\delta\cdot |\triangle ABC|.
$$
\end{manualtheorem}
\begin{proof}

Let
$$
u_A(t)=t\cdot(\log\frac{b'}{b}+\log\frac{c'}{c}-\log\frac{a'}{a})
$$
and $u_B(t),u_C(t)$ be defined similarly.
Then $|u'|\leq-3\log(1-\delta)\leq 6\delta$, since
$$
\delta\leq\frac{\epsilon^2}{48}\leq\frac{(\pi/3)^2}{48}\leq0.1.
$$
Assume
$$
a(t)=e^{\frac{1}{2}(u_B(t)+u_C(t))}a,\quad b(t)=e^{\frac{1}{2}(u_A(t)+u_C(t))}b,\quad c(t)=e^{\frac{1}{2}(u_A(t)+u_B(t))}c,
$$
and then $a(1)=a',b(1)=b',c(1)=c'$.
Let $A(t),B(t),C(t)$ be the inner angles of the triangle with edge lengths $a(t),b(t),c(t)$, if well-defined.

Let $T_0\in[0,\infty]$ be the maximum real number such that for any $t\in[0,T_0)$, all $A(t),B(t),C(t)>\epsilon/2$.
Then $T_0>0$ and for any $t\in[0,T_0)$, by Lemma \ref{Euclidean infinitesimal}
$$
|A'(t)|=
\left|\frac{\partial A}{\partial u_A}u_A'+\frac{\partial A}{\partial u_B}u_B'+\frac{\partial A}{\partial u_C}u_C'\right|
\leq 2\cot\frac{\epsilon}{2}\cdot|u'|\leq12\delta\cot\frac{\epsilon}{2}\leq \frac{24}{\epsilon}\delta,
$$
and similarly $|B'(t)|,|C'(t)|\leq 24\delta/\epsilon$.
So $T_0\geq(\epsilon/2)/(24\delta/\epsilon)=\epsilon^2/(48\delta)>1$, and $|A'-A|\leq 24\delta/\epsilon$.

By Lemma \ref{Euclidean infinitesimal} for $t\in(0,1)$
$$
\frac{\partial |\triangle ABC|}{\partial a}=\frac{\partial (\frac{1}{2}bc\sin A)}{\partial A}\cdot\frac{\partial A}{\partial a}
=\frac{1}{2}bc\cos A\cdot\frac{a}{bc\sin A}=\frac{a(t)}{2\tan A(t)}
$$
and then by the chain rule
$$
\left|\frac{d|\triangle ABC(t)|}{dt}\right|\leq
|u'|\cdot\left(\left|\frac{a^2}{2\tan A}\right|+\left|\frac{b^2}{2\tan B}\right|+\left|\frac{c^2}{2\tan C}\right|\right)\leq6\delta\cdot\frac{a(t)^2+b(t)^2+c(t)^2}{\epsilon},
$$
where $a(t)\leq e^{|u(t)|}a\leq e^{6\delta t}a\leq2a$  and $b(t)\leq2b$ and $c(t)\leq 2c$.

Then by Lemma \ref{triangle area},
$$
\big||\triangle A'B'C'|-|\triangle ABC|\big|\leq\frac{24\delta}{\epsilon}(a^2+b^2+c^2)\leq
\frac{24\delta}{\epsilon}\cdot 3\cdot\frac{8}{\epsilon}\cdot|\triangle ABC|
=\frac{576}{\epsilon^2}\delta|\triangle ABC|.
$$
\end{proof}

\begin{manualtheorem}{Lemma 3.6}
Given a hyperbolic triangle $\triangle ABC$, if all the angles in $\triangle ABC$ are at least $\epsilon>0$, and $\delta<\epsilon^3/60$, and
$$
a\leq0.1,\quad b\leq0.1
,\quad c\leq0.1,
$$
and
$$
|a'-a|\leq \delta a,\quad |b'-b|\leq \delta a,\quad |c'-c|\leq \delta c,
$$
then $a',b',c'$ form a hyperbolic triangle with opposite inner angles $A',B',C'$ respectively, and
$$
|A'-A|\leq\frac{30}{\epsilon^2}\delta,
$$
and
$$
\bigg||\triangle A'B'C'|-|\triangle ABC|\bigg|\leq \frac{120}{\epsilon^2}\delta \cdot |\triangle ABC|.
$$
\end{manualtheorem}

\begin{proof}

Let
$$
a(t)=ta'+(1-t)a,\quad b(t)=tb'+(1-t)b,\quad c(t)=tc'+(1-t)c,
$$
and
$A(t),B(t),C(t)$ be the inner angles of the triangle with edge lengths $a(t),b(t),c(t)$, if well-defined.

Let $T_0\in[0,\infty]$ be the maximum real number such that for any $t\in[0,T_0)$, all $A(t),B(t),C(t)>\epsilon/2$.
Notice that $\delta<\epsilon^3/60<0.1$ and then for any $t\in[0,T_0)$, $\sinh a(t)\in[a,2a]$ and so on.
By Lemma \ref{hyperbolic infinitesimal}
\begin{align*}
&|\dot A(t)|=
\left|\frac{\partial A}{\partial a}\dot a+\frac{\partial A}{\partial b}\dot b+\frac{\partial A}{\partial c}\dot c\right|\\
\leq &\frac{|a'-a|}{\sinh b(t)\sin(\epsilon/2)}+\frac{\cot(\epsilon/2)|b'-b|}{\sinh b(t)}+\frac{\cot(\epsilon/2)|c'-c|}{\sinh c(t)}\\
\leq&\frac{|a'-a|}{\sinh a(t)\sin^2(\epsilon/2)}+\frac{\cot(\epsilon/2)|b'-b|}{\sinh b(t)}+\frac{\cot(\epsilon/2)|c'-c|}{\sinh c(t)}\\
\leq&2\delta\left(\frac{1}{\sin^2(\epsilon/2)}+2\cot(\epsilon/2)\right)\\
\leq&2(\frac{\pi^2}{\epsilon^2}+\frac{4}{\epsilon})\delta
\leq\frac{30}{\epsilon^2}\delta.
\end{align*}

and similarly $|\dot B(t)|,|\dot C(t)|\leq 30\delta/\epsilon^2$.
So $T\geq(\epsilon/2)/(30\delta/\epsilon^2)=\epsilon^3/(60\delta)>1$, and
$$
|A'-A|\leq 30\delta/\epsilon^2,\quad|B'-B|\leq 30\delta/\epsilon^2,\quad|C'-C|\leq 30\delta/\epsilon^2.
$$
For $t\in[0,1]$, by Lemma \ref{hyperbolic infinitesimal}
\begin{align*}
&\left|\frac{\partial(A+B+C)}{\partial a}\right|=\left|\frac{1}{\sinh b\sin C}-\frac{\cot C}{\sinh a}-\frac{\cot B}{\sinh a}\right|
=\left|\frac{1}{\sinh a}\frac{\sin A-\sin(B+C)}{\sin B\sin C}\right|\\
\leq&\frac{1}{\sinh a}\frac{|\sin(\pi-A)-\sin(B+C)|}{\sin^2(\epsilon/2)}\leq
\frac{\pi^2(\pi-A(t)-B(t)-C(t))}{\epsilon^2\sinh a(t)}\leq\frac{2\pi^2}{\epsilon^2}\frac{|\triangle ABC(t)|}{a},
\end{align*}
and then
\begin{align*}
&\left|\frac{d|\triangle ABC|}{dt}\right|=
|\dot A+\dot B+\dot C|
\leq\frac{2\pi^2}{\epsilon^2}|\triangle ABC(t)|\left(\frac{|a'-a|}{a}+\frac{|b'-b|}{b}+\frac{|c'-c|}{c}
\right)\\
\leq&
\frac{6\pi^2\delta}{\epsilon^2}|\triangle ABC(t)|.
\end{align*}
So
$$
\frac{|\triangle A'B'C'|}{|\triangle ABC|}\in[ e^{-6\pi^2\delta/\epsilon^2},e^{6\pi^2\delta/\epsilon^2}]\in[1-6\pi^2\delta/\epsilon^2,1+120\delta/\epsilon^2]
$$
and
$$
\big||\triangle A'B'C'|-|\triangle ABC|\big|\leq \frac{120}{\epsilon^2}\cdot\delta\cdot|\triangle ABC|.
$$
\end{proof}

\end{appendices}

\bibliographystyle{unsrt}
\bibliography{kc2gkc2}

\end{document}